\newcommand{\leqnomode}{\tagsleft@true\let\veqno\@@leqno}
\newcommand{\reqnomode}{\tagsleft@false\let\veqno\@@eqno}
\newtheorem{theorem}{Theorem}
\newtheorem{lemma}{Lemma}
\newtheorem{proposition}{Proposition}
\theoremstyle{definition}
\theoremstyle{definition}
\newtheorem{remark}{Remark}
\newtheorem{definition}{Definition}
\newtheorem{assumption}{Assumption}
\newcommand{\tr}{\text{tr}}
\newcommand{\Diag}{\text{Diag}}
\newcommand{\diag}{\text{diag}}
\newcommand{\Col}{\text{Col}}
\newcommand{\Nul}{\text{Nul}}
\newcommand{\Conv}{\text{Conv}}
\newcommand{\rank}{\text{rank}}
\newcommand{\Span}{\text{Span}}
\newcommand{\inner}{\text{inner}}
\newcommand{\arrow}{\text{arrow}}
\newcommand{\aff}{\text{aff}}
\newcommand{\supp}{\text{supp}}
\DeclareMathOperator*{\argmax}{arg\,max}
\begin{document}

\title{ SDP-based bounds for the Quadratic Cycle Cover Problem via cutting plane augmented Lagrangian methods and reinforcement learning}
\author{Frank de Meijer \thanks{CentER, Department of Econometrics and OR, Tilburg University, The Netherlands, {\tt f.j.j.demeijer@uvt.nl}}
	\and {Renata Sotirov}  \thanks{CentER, Department of Econometrics and OR, Tilburg University, The Netherlands, {\tt r.sotirov@uvt.nl}}}

\date{}

\maketitle

\begin{abstract}
We study the Quadratic Cycle Cover Problem (\textsc{QCCP}), which aims to find a node-disjoint cycle cover in a directed graph with minimum interaction cost between successive arcs.
We derive several semidefinite programming (SDP) relaxations and use facial reduction to make these strictly feasible. We investigate a nontrivial relationship between the transformation matrix used in the reduction and the structure of the graph, which is exploited in an efficient algorithm that constructs this matrix for any instance of the problem. To solve our relaxations, we propose an algorithm that incorporates an augmented Lagrangian method into a cutting plane framework by utilizing Dykstra's projection algorithm.
Our algorithm is suitable for solving  SDP relaxations with a large number of cutting planes.
Computational results show that our SDP bounds and our efficient cutting plane algorithm outperform other \textsc{QCCP} bounding approaches from the literature.
Finally, we provide several SDP-based upper bounding techniques, among which a sequential Q-learning method that exploits a solution of our SDP relaxation within a reinforcement learning environment.
\end{abstract}

\textbf{Keywords} quadratic cycle cover problem, semidefinite programming, facial reduction, cutting plane method, Dykstra's projection algorithm, reinforcement learning.

\section{Introduction}
A disjoint cycle cover in a graph is a set of node-disjoint cycles such that every node is covered by exactly one cycle. The cycle cover problem (\textsc{CCP}) is the problem of finding a disjoint cycle cover such that the total arc weight is minimized.
In this paper we focus on its quadratic version, which is known as the quadratic cycle cover problem (\textsc{QCCP}).
The \textsc{QCCP}  is the problem of finding a disjoint cycle cover in a graph such that the total sum of interaction costs between consecutive arcs is minimized.
Although the problem can be defined for both directed and undirected graphs, we focus here on the asymmetric version which is defined on directed graphs.

The \textsc{QCCP} is introduced by J\"ager and Molitor \cite{Jager}. Fischer et al.\ \cite{FischerEtAl} show that the problem is $\mathcal{NP}$-hard. This result is later on strengthened by De Meijer and Sotirov \cite{DeMeijerSotirov}, who prove that the \textsc{QCCP} is strongly $\mathcal{NP}$-hard and not approximable within any constant factor.

There exist several  special cases of the \textsc{QCCP} with respect to the objective function, for example, the angular metric cycle cover problem (\textsc{Angle-CCP}) \cite{Aggarwal} and the minimum reload cost cycle cover (\textsc{MinRC3}) problem \cite{Galbiati}.
In the \textsc{Angle-CCP}  the quadratic costs represent the change of the direction induced by two consecutive arcs.
The \textsc{MinRC3} problem is the problem of finding a minimum disjoint cycle cover in an arc-colored graph under the reload cost model.
These problems have applications in various fields, such as robotics \cite{Aggarwal}, cargo and energy distribution networks \cite{WirthSteffan}.
We refer the interested reader to \cite{DeMeijerSotirov} for a more detailed overview of these variants of the  \textsc{QCCP} and their applications.
The \textsc{QCCP} may also be  seen as a generalization of the minimum-turn cycle cover problem, which  belongs to the class of covering-tour problems  introduced in Arkin et al.\ \cite{ArkinEtAl}.
Covering tour problems play an important role in manufacturing, automatic inspection,  spray painting operations, etc.
For a detailed overview of cycle cover problems with turn costs and their applications, we refer the reader to \cite{FeketeKrupke}.

The importance of the  \textsc{QCCP} is also due to its close connection to the quadratic traveling salesman problem (\textsc{QTSP}).
The goal of the \textsc{QTSP} is to find a Hamiltonian cycle in a graph minimizing the total quadratic costs between consecutive arcs.
 After removing the subtour elimination constraints, the \textsc{QTSP} boils down to the \textsc{QCCP}. Not surprisingly, the \textsc{QTSP} was introduced simultaneously with the \textsc{QCCP} in \cite{Jager}.
 The \textsc{QTSP} is proven to be $\mathcal{NP}$-hard and not approximable within any constant factor \cite{Jager} and is generally accepted to be one of the hardest combinatorial optimization problems nowadays. The \textsc{QTSP} has applications in robotics, bioinformatics and telecommunication, see e.g., Fischer et al.\ \cite{AandFFischer}.
 The \textsc{QCCP} plays an important role in obtaining strong lower and upper bounds for the \textsc{QTSP} \cite{AandFFischer, Jager, Stanek}.

Various papers  study theoretical aspects as well as  solution approaches  for the \textsc{QCCP} and its variants.
Fischer \cite{Fischer} studies the polyhedral structure of the quadratic cycle cover polytope and provides  several inequalities that define the facets of this polytope.
B\"uy\"uk\c{c}olak et al.\  \cite{Buyukcolak}  consider  the \textsc{MinRC3} on complete graphs with an equitable or nearly equitable 2-edge coloring
and derive a polynomial time algorithm that constructs a monochromatic cycle cover.
 J\"ager and Molitor \cite{Jager} use approximated solutions of the QCCP  as lower bounds in a branch-and-bound algorithm for the QTSP.
Galbiati et al.\ \cite{Galbiati} exploit a column generation approach to compute lower bounds for the  \textsc{MinRC3} problem.
 Stan\v ek et al.\ \cite{Stanek} use the QCCP combined with a rounding procedure to construct heuristics for the QTSP.
 Several local search algorithms for the  \textsc{MinRC3} problem are given in \cite{Galbiati}.
 Approximation algorithms for the \textsc{QCCP} and its variants are studied in \cite{Aggarwal,ArkinEtAl,FeketeKrupke}.

 The linearization problem of the \textsc{QCCP} is considered by De Meijer and Sotirov \cite{DeMeijerSotirov}. Several sufficient conditions for a \textsc{QCCP} instance to be linearizable are provided, which are used to construct strong linearization based bounds for any instance of the problem.

\subsection*{Main results and outline }

The aim of this paper is to construct efficient lower and upper bounding approaches for the \textsc{QCCP} based on semidefinite programming.  
To achieve this goal we introduce several methods  
that can be extended to a range of other optimization problems.
In our work we combine a  wide variety of different  techniques including facial reduction, projection methods, randomized algorithms and reinforcement learning.

First, we derive three SDP relaxations for the  \textsc{QCCP} with increasing complexity. 
Our strongest SDP relaxation contains nonnegativity constraints and an additional subset of the facet-defining inequalities of the Boolean Quadric Polytope (BQP),
which make it a powerful yet very difficult to solve relaxation.
As a first step in the development of our algorithmic approaches for computing \textsc{QCCP} lower bounds,
 we study the geometry of the feasible sets of our relaxations.
 We prove that the relaxations are not Slater feasible, and show how to perform facial reduction to project the feasible sets onto lower dimensional spaces.
The transformation matrix needed for this projection is  graph-specific.
Therefore we propose a polynomial time algorithm
based on the bipartite representation of the underlying graph that provides a sparse transformation matrix.

To solve our SDP relaxation with nonnegativity constraints, we study the following two variants of the alternating direction augmented Lagrangian method;
the (original) Alternating Direction Method of Multipliers (\textsc{ADMM}) and the Peaceman-Rachford splitting method (\textsc{PRSM}) that is also known as the symmetric \textsc{ADMM}.
Although  the \textsc{ADMM} is tested on SDP relaxations of various optimization problems,
the \textsc{PRSM} with larger stepsize was not implemented up to date for SDP relaxations.
Our results show that the \textsc{PRSM} outperforms the classical \textsc{ADMM} for the relaxation with nonnegativity constraints.
Therefore we take the \textsc{PRSM} as the backbone of our new approach.

It is  well-known that current SDP solvers have difficulties solving relaxations including the facet-defining inequalities of the BQP. To solve our strongest relaxation including these cuts, we present an advanced cutting plane method that extends on the \textsc{PRSM}: a cutting plane augmented Lagrangian method (\textsc{CP-ALM}). 
The \textsc{CP-ALM} exploits the well-known Dykstra projection algorithm to deal with the BQP cuts.
We (partially) parallelize Dykstra's cyclic algorithm by clustering the set of BQP inequalities into subsets of nonoverlapping cuts. We present several other ingredients that improve the efficiency of the algorithm.
The \textsc{CP-ALM} also exploits warm starts each time new violated cuts are added.
Although it might seem that our algorithm is problem specific, all ingredients described in this paper can be easily extended for solving other optimization  problems.

Finally, we derive several upper bounding approaches that exploit the output matrices from the \textsc{CP-ALM}.
 Let us list the most prominent ones.
In our randomized undersampling algorithm we sample a partial solution and deterministically extend it to a full cycle cover.
In randomized oversampling we iteratively draw a pair of successive arcs according to a distribution related to the SDP solution, until we obtain a cycle cover.
Our most sophisticated rounding approach is based on a distributed reinforcement learning technique, i.e., Q-learning.
In particular, we let artificial agents learn  how to find cycle covers by exploiting the SDP solution matrix such that the expected total reward is maximized.
The latter approach provides the best upper bounds among all presented ones. We expect that these rounding approaches can be successfully extended to relaxations of other optimization problems.
Let us emphasize that it is challenging to find good feasible solutions for the \textsc{QCCP}, especially for large instances,
since the considered graphs are not necessarily complete.

We provide extensive numerical tests on data sets used for the \textsc{QCCP} as well as data sets for the \textsc{QTSP}.
Our bounds significantly outperform other bounds from the literature.\\

\noindent The paper is structured as follows.
In Section \ref{SectionQCCP}, we formally introduce  the \textsc{QCCP} and study its associated directed 2-factor polytope.
In Section \ref{SectionSDPrelaxations}, we construct several SDP relaxations for the \textsc{QCCP} of increasing complexity.
The Slater feasibility of the SDP relaxations is the topic of Section \ref{SubsectionFacialReduction}. Since transformation matrices used for projecting onto the minimal face  are graph-specific, we provide a polynomial time algorithm for computing their sparse expressions
in Section \ref{sec:polyMatrix}. In Section \ref{SectionSolving}, we propose a new algorithm  for solving the SDP relaxations that is based on a combination of the \textsc{PRSM},
Dykstra's projection algorithm and a cutting plane method. Several upper bounding approaches are discussed in Section \ref{SectionUpper}. Section \ref{SectionNumerics} provides an extensive numerical study of all introduced methods.

\subsection*{Notation}
A directed graph $G = (N,A)$ is given by a node set $N$ and an arc set $A \subseteq N \times N$, where $n = |N|$ and $m = |A|$. For all $i \in N$, let $\delta^+(i)$ and $\delta^-(i)$ denote the set of arcs leaving and entering node $i$, respectively. For all $S, T \subseteq N$, let $\delta^+(S,T)$ (resp.\ $\delta^-(S,T)$) denote the set of arcs going from $S$ to $T$ (resp.\ $T$ to $S$). The starting node of an arc $e \in A$ is denoted by $e^+$. Similarly, we denote by $e^-$ the ending node of $e$.

Let $\bold{0}_n$ and $\bold{1}_n$ be the $n \times 1$ vector of zeros and the $n \times 1$ vector of ones, respectively. Moreover, we denote the $i$-th elementary vector by $\bold{e}_i$. Let $J_n$ and $I_n$ be the $n \times n$ matrix of all ones and the $n \times n$ identity matrix, respectively. In case the order of these vectors or matrices is clear, we omit the subscript to simplify notation.

In this paper we frequently work with extended matrices of the form $\begin{pmatrix}
1 & x^\top \\ x & X
\end{pmatrix}$ for some $x \in \mathbb{R}^m$ and $X \in \mathbb{R}^{m \times m}$. We index the top row and top column of such extended matrix as row zero and column zero, respectively. Accordingly,
 we denote by $\bold{e}_0$ a vector that has a one on the first position and all other elements zero in the context of extended matrices.

For any matrix $M \in \mathbb{R}^{m \times n}$, let $\Col(M)$ be the linear space spanned by the columns of $M$. The null space of $M$ is denoted by $\Nul(M)$. For all $M, N \in \mathbb{R}^{m \times n}$, the Hadamard product $M \circ N$ equals the entrywise product of $M$ and $N$, i.e., $(M \circ N)_{ij} = M_{ij}N_{ij}$. Moreover, for all square matrices $M$, the operator $\diag : \mathbb{R}^{n \times n} \rightarrow \mathbb{R}^n$ maps a matrix to a vector consisting of its diagonal elements. Its adjoint operator is given by $\Diag : \mathbb{R}^n \rightarrow \mathbb{R}^{n \times n}$. The trace of a square matrix $M$ is given by $\tr(M)$.

Let $\mathcal{S}^m$ denote the set of all $m \times m$ real symmetric matrices. We denote by $M \succeq 0$ that the matrix $M$ is positive semidefinite and let $\mathcal{S}^m_+$ be the set of all positive semidefinite matrices of order $m$, i.e., $\mathcal{S}^m_+ := \{ M \in \mathcal{S}^m \, : \, \, M \succeq 0 \}$. Let $\langle \cdot , \cdot \rangle$ denote the trace inner product. That is, for any $M,N \in \mathbb{R}^{m \times m}$, we define $\langle M, N \rangle := \tr(M^\top N) = \sum_{i = 1}^m\sum_{j = 1}^m M_{ij}N_{ij}$. Its associated norm is the Frobenius norm, denoted by $|| M ||_F := \sqrt{\tr(M^\top M)}$.

\section{The Quadratic Cycle Cover Problem} \label{SectionQCCP}
In this section we formally introduce the asymmetric version of the quadratic cycle cover problem. Moreover, we introduce the directed 2-factor polytope and consider some of its properties.\\ \\
The quadratic cycle cover problem (\textsc{QCCP}) is the problem of finding a set of node-disjoint cycles covering all the nodes such that the sum of interaction costs between successive arcs is minimized. Since we assume that all cycle
covers in this paper are disjoint, we use the term cycle cover to denote this concept in the sequel. An instance of the \textsc{QCCP} is specified by the pair $(G,Q)$, where $G = (N,A)$ is a simple directed graph with $n = |N|$ nodes and $m = |A|$ arcs and $Q = (q_{ef}) \in \mathbb{R}^{m \times m}$ is a quadratic cost matrix. We assume that the entries of $Q$ are such that $q_{ef} = 0$ if arc $f$ is not a successor of arc $e$, i.e., if $f \notin \delta^+(e^-)$. \\ \\
Let $x \in \{0,1\}^m$ represent the characteristic vector of a cycle cover. That is, $x_e = 1$ if arc $e$ belongs to the cycle cover and $x_e = 0$ otherwise. Then the \textsc{QCCP} can be formulated as:
\begin{align} \label{ProblemDef}
 OPT(Q) :=  \min  \left\{  x^\top Qx: ~    x \in P \right\},
\end{align}
where $P$ denotes the set of all cycle covers in $G$, i.e.,
\begin{align}\label{setP}
P := \left\{ x \in \{0,1\}^m \, : \, \sum_{e \in \delta^+(i)} x_e = \sum_{e \in \delta^-(i)}x_e = 1 \quad \forall i \in N \right\}.
\end{align}
In the literature, a cycle cover in a directed graph is also called a directed 2-factor.
For the existence of a directed 2-factor in a graph, see e.g., Chiba and Yamashita \cite{Chiba}. The \textsc{QCCP} is shown to be $\mathcal{NP}$-hard in the strong sense and not approximable within any constant factor \cite{FischerEtAl, DeMeijerSotirov}.

The linear problem corresponding to the \textsc{QCCP} is called the cycle cover problem (\textsc{CCP}). Given a linear arc-weight function, the \textsc{CCP} asks for a minimum weight cycle cover in $G$. The \textsc{CCP} reduces to the well-known linear assignment problem, see e.g., \cite{Burkard}, and is therefore polynomial time solvable.\\

\noindent Let $\Conv(P)$ be the convex hull of all characteristic vectors corresponding to  directed 2-factors in $G$. We call this set the directed 2-factor polytope. Let $U \in \mathbb{R}^{n \times m}$ and $V \in \mathbb{R}^{n \times m}$ be defined as
\begin{align*}
U_{i,e}  := \begin{cases} 1 & \text{if arc $e$ starts at node $i$} \\
0 & \text{otherwise,}
\end{cases} \hspace{1cm} V_{i,e}  := \begin{cases} 1 & \text{if arc $e$ ends at node $i$} \\
0 & \text{otherwise.}
\end{cases}
\end{align*}
Additionally, let $u_i^\top$ and $v_i^\top$ denote the $i$-th row of $U$ and $V$, respectively.
Thus, $P=\{x \in \{0,1\}^m \, : \, \, [U^\top, V^\top]^\top x = \bold{1}_{2n}\}$. It follows from the total unimodularity of $[U^\top, V^\top]^\top$ that the directed 2-factor polytope can be written explicitly as:
\begin{align} \label{ExplicitConvP}
\Conv(P) = \left\{ x \in \mathbb{R}^m \, : \, \, x \geq \bold{0}_m \, , \, \, \begin{bmatrix}
U \\ V
\end{bmatrix} x = \bold{1}_{2n} \right\}.
\end{align}
Observe that the arcs that are never used in a cycle cover are irrelevant for the \textsc{QCCP}. We define the set $\mathcal{J}$ consisting of all arcs with this property, i.e.,
\begin{align*}
\mathcal{J} := \{f \in A \, : \, \, x_f = 0 \text{ for all  } x \in P \}.
\end{align*}
The elements in $\mathcal{J}$ can be obtained in polynomial time by solving  for each $f \in A$  the following \textsc{CCP}:
\begin{align*}
z_f := \max \{\bold{e}_f^\top x \, : \, \, x \in P \}.
\end{align*}
The set $\mathcal{J}$ consists of all arcs $f \in A$ for which $z_f = 0$.
Without loss of generality, we can remove the arcs that are in $\mathcal{J}$ from the given instance to simplify the problem.
This leads to the following assumption that applies to the rest of this paper.

\begin{assumption} \label{AssumptionJempty} There exists at least one cycle cover in $G$, i.e., $P \neq \emptyset$. Moreover, the set $\mathcal{J}$ is empty.
\end{assumption}
\noindent We end this section by considering the dimension of the directed 2-factor polytope. We define
\begin{align} \label{alpha}
\alpha := \rank \left( \begin{bmatrix}
U \\ V
\end{bmatrix} \right).
\end{align}
In Section \ref{sec:polyMatrix}, we derive the value of $\alpha$ in terms of the graph. For now, we note that $n \leq \alpha \leq 2n - 1$, provided that Assumption \ref{AssumptionJempty} holds.

It follows from the rank-nullity theorem that $\dim(\Nul([U^\top , V^\top]^\top)) = m - \alpha$. Let us prove the following lemma.
\begin{lemma} \label{LemmaDimensionConvP}
Under Assumption \ref{AssumptionJempty}, the dimension of the directed 2-factor polytope equals $m - \alpha$.
\end{lemma}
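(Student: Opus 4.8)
The goal is to show $\dim(\Conv(P)) = m - \alpha$. Since $\Conv(P)$ lives inside the affine subspace $\{x : [U^\top,V^\top]^\top x = \mathbf{1}_{2n}\}$, which has dimension $m - \alpha$ by the rank-nullity theorem, the upper bound $\dim(\Conv(P)) \le m - \alpha$ is immediate. The work is entirely in the lower bound: I must exhibit enough affinely independent cycle covers, equivalently show that $\Conv(P)$ is not contained in any hyperplane other than those implied by the equality system $[U^\top,V^\top]^\top x = \mathbf{1}_{2n}$.

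The plan is to argue by contradiction in the standard polyhedral way. Suppose $c^\top x = d$ holds for every $x \in P$; I want to conclude that $(c,d)$ is a linear combination of the rows of $([U^\top,V^\top]^\top \mid \mathbf{1}_{2n})$, i.e. $c \in \Row([U^\top,V^\top]^\top) = \Col([U,V]^\top)^{\perp\perp}$ — concretely, $c = U^\top y + V^\top z$ for some $y,z \in \mathbb{R}^n$. First I would fix an arbitrary cycle cover $x^0 \in P$ (which exists by Assumption~\ref{AssumptionJempty}). Then for any two arcs $e, f$ that can be "swapped" while staying in $P$, the relation $c^\top x = d$ on both covers forces a linear constraint on the coordinates of $c$. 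The cleanest route is: take any arc $e \in A$ and any cycle cover containing $e$; because $\mathcal{J} = \emptyset$, every arc lies in some cover, so the coordinate $c_e$ is "reachable." The key combinatorial fact I would use is that $\Conv(P)$ is exactly the 2-factor polytope \eqref{ExplicitConvP}, whose only facet-defining equalities are (a subset of) the degree equalities; so any valid equality for $P$ is spanned by them. To make this self-contained I would invoke total unimodularity of $[U^\top,V^\top]^\top$ together with the fact that $P \ne \emptyset$: this guarantees $\Conv(P)$ is a polytope whose affine hull is cut out precisely by $[U^\top,V^\top]^\top x = \mathbf{1}_{2n}$, because deleting any redundant rows leaves an integral polytope of full dimension in the remaining subspace — and the condition $\mathcal{J}=\emptyset$ is exactly what rules out extra implicit equalities of the form $x_f = 0$.

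More concretely, the argument I expect to write: let $L := \Col([U,V]^\top) = \Row([U^\top,V^\top]^\top) \subseteq \mathbb{R}^m$, which has dimension $\alpha$. The affine hull $\aff(P)$ is contained in $x^0 + L^\perp$, so $\dim \Conv(P) \le m - \alpha$. For the reverse inequality it suffices to find $m - \alpha$ directions in $L^\perp$ each of the form $x' - x^0$ with $x', x^0 \in P$, spanning $L^\perp$. A direction $x' - x^0$ with both endpoints in $P$ corresponds to a union of alternating cycles in $G$ (edges of $x^0$ vs. edges of $x'$); the span of all such "circuit" directions is known to be all of $L^\perp$ precisely because $[U^\top,V^\top]^\top$ is the incidence-type matrix of the 2-factor problem and $\mathcal{J} = \emptyset$ ensures no arc is frozen. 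I would make this precise by: (i) noting $L^\perp = \Nul([U^\top,V^\top]^\top)$; (ii) showing every $w \in \Nul([U^\top,V^\top]^\top)$ with small enough $\|w\|_\infty$ satisfies $x^0 + w \in \Conv(P)$, using that $x^0$ has all coordinates in $\{0,1\}$ and that $x^0 + w$ automatically satisfies the degree equalities, hence lies in the polytope \eqref{ExplicitConvP} as long as $x^0 + w \ge 0$, which holds for $\|w\|_\infty \le \tfrac12$ wait — I need $w$ supported so that coordinates where $x^0_e = 0$ stay nonnegative; here the condition $\mathcal{J} = \emptyset$ (every arc is used by *some* cover) combined with integrality of the polytope is what I'd lean on to perturb in every needed direction; (iii) concluding $\Conv(P)$ contains a full-dimensional (within $x^0 + L^\perp$) neighborhood of $x^0$.

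The main obstacle is step (ii)/(iii): it is tempting but false in general that $x^0 + w \ge 0$ for all small $w \in \Nul([U^\top,V^\top]^\top)$, since at arcs $e$ with $x^0_e = 0$ a perturbation in the negative direction leaves the polytope. The correct fix is to use that $\Conv(P)$ is an integral polytope (by \eqref{ExplicitConvP} and total unimodularity) together with Assumption~\ref{AssumptionJempty}: because $\mathcal{J} = \emptyset$, for each arc $e$ there is a vertex $x^e \in P$ with $x^e_e = 1$, and one shows the vertices $\{x^e\}$ together with $x^0$ affinely span $x^0 + L^\perp$. I'd verify this by a rank/dimension count — the differences $x^e - x^0$ cannot all lie in a proper subspace of $L^\perp$, for otherwise that subspace would give an extra valid equality $c^\top x = d$ with $c \notin L$, contradicting the characterization of $\aff(\Conv(P))$ for a 2-factor polytope with no forced-zero arcs. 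This last contradiction step — ruling out extra implicit equalities — is where the hypothesis $\mathcal{J} = \emptyset$ does the essential work, and it is the part I'd write most carefully.
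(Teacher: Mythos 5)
Your upper bound and your identification of where Assumption~\ref{AssumptionJempty} must enter both agree with the paper, but the decisive step of your lower bound is circular as written. You propose to show that the differences $x^e - x^0$ span $\Nul([U^\top,V^\top]^\top)$ by arguing that otherwise "that subspace would give an extra valid equality $c^\top x = d$ with $c \notin L$, contradicting the characterization of $\aff(\Conv(P))$ for a 2-factor polytope with no forced-zero arcs" --- but that characterization of the affine hull \emph{is} Lemma~\ref{LemmaDimensionConvP}; you cannot invoke it to prove itself. The same circularity appears in your earlier assertion that "any valid equality for $P$ is spanned by the degree equalities," which is again the statement to be proved. Moreover, the auxiliary claim that the particular vertices $\{x^e\}\cup\{x^0\}$ affinely span $\aff(P)$ is both stronger than needed and unjustified (and even granting the lemma, a linear functional vanishing on the chosen differences $x^e-x^0$ need not vanish on all of $P$, so the contradiction you describe does not follow). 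You correctly diagnose that your perturbation step (ii) fails at arcs with $x^0_e=0$, but the fix you offer does not close the argument.

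The non-circular route --- which is the paper's proof --- uses only the explicit description \eqref{ExplicitConvP}: $\Conv(P) = H \cap \mathbb{R}^m_+$ with $H := \{x \in \mathbb{R}^m : [U^\top,V^\top]^\top x = \mathbf{1}_{2n}\}$ of dimension $m-\alpha$. For a set of this form the dimension can drop below $\dim H$ only if $\Conv(P)$ lies in a facet $\{x : x_e = 0\}$ of the orthant; concretely, if no coordinate vanishes identically on $\Conv(P)$, pick for each arc $e$ a cycle cover with $x_e = 1$ (possible since $\mathcal{J}=\emptyset$), average these finitely many points to obtain $\bar{x} \in \Conv(P)$ with $\bar{x} > 0$ entrywise, and observe that a relative neighbourhood of $\bar{x}$ inside $H$ stays in $H\cap\mathbb{R}^m_+=\Conv(P)$, giving $\dim(\Conv(P)) = m-\alpha$. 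Containment in $\{x : x_e = 0\}$ would force $e \in \mathcal{J}$, contradicting Assumption~\ref{AssumptionJempty}. This replaces your steps (ii)--(iii) and the vertex-spanning detour entirely; no alternating-cycle or circuit-direction machinery is needed.
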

\begin{proof}
It follows from (\ref{ExplicitConvP}) that
\begin{align*}
\Conv(P) = \{x \in \mathbb{R}^m \, : \, \, [U^\top, V^\top]^\top x = \bold{1}_{2n} \} \cap \mathbb{R}^m_+.
\end{align*}
Obviously, the set $\mathbb{R}^m_+$ is full-dimensional, whereas the dimension of $\{x \in \mathbb{R}^m \, : \, \, [U^\top, V^\top]^\top x = \bold{1}_{2n} \}$ equals $\dim(\Nul([U^\top, V^\top ]^\top)) = m - \alpha$. Hence, we have $\dim(\Conv(P)) \leq m - \alpha$, where strict inequality holds only if $\Conv(P)$ is fully contained in one of the facets of $\mathbb{R}^m_+$. Now assume for the sake of contradiction that there exists some arc $e$ such that $\Conv(P) \subseteq \{x \in \mathbb{R}^m_+ \, : \, \, x_e = 0\}$. Consequently, we must have $P \subseteq \{x \in \mathbb{R}^m_+ \, : \, \, x_e = 0\}$, which implies that $e \in \mathcal{J}$. This contradicts Assumption \ref{AssumptionJempty}. We conclude that $\dim(\Conv(P)) = m - \alpha$.
\end{proof}

\section{SDP relaxations for the \textsc{QCCP}} \label{SectionSDPrelaxations}
In this section we focus on constructing several semidefinite programming relaxations for the \textsc{QCCP}. These relaxations increase in strength and complexity.\\ \\
Using the trace inner product, the objective function of (\ref{ProblemDef}) can be rewritten as $x^\top Qx = \langle Q, xx^\top \rangle = \langle Q, X \rangle$, where we replace $xx^\top$ by a matrix variable $X \in \mathcal{S}^m$. We now relax the equality $X - xx^\top = 0$ by replacing it by the SDP constraint $X - xx^\top \succeq 0$. It follows from the Schur complement that we can equivalently write $\begin{pmatrix}
1 & x^\top \\ x & X
\end{pmatrix} \succeq 0$. Moreover, since $x \in P$ is a binary vector, we have $\diag(X) = x$.
This leads to the following basic feasible set for an SDP relaxation of the \textsc{QCCP}:
\begin{equation}\label{feasibleBasic}
{\mathcal F}_{\rm basic} :=
\left \{
\begin{pmatrix}
1 & x^\top \\
x & X
\end{pmatrix}\in \mathcal{S}^{m+1} : ~
u_i^\top x = v_i^\top x = 1 ~(\forall i \in N),  ~x= \text{diag}(X),~
\begin{pmatrix}
1 & x^\top \\
x & X
\end{pmatrix} \succeq 0
 \right \}.
\end{equation}
We show below how to strengthen  the feasible set \eqref{feasibleBasic} by adding valid constraints.

Since each cycle cover consists of $n$ arcs, we know that $\bold{1}_m^\top x = n$ for all $x \in P$.
This can be written equivalently as $\tr(X) = n$, which we refer  to as the trace constraint. Moreover, since $xx^\top$  is replaced  by  $X$, the constraint $\langle J, X \rangle = n^2$, which we call the all-ones constraint, is also valid.

One can also add to ${\mathcal F}_{\rm basic}$ the so-called squared linear constraints. These constraints result from taking the product of the linear constraints
 $u_i^\top x = 1$ and $u_j^\top x = 1$ for all $i, j \in N$, which yield $1 = (u_i^\top x)(x^\top u_j) = \langle u_iu_j^\top, xx^\top \rangle$. Hence, the constraint $\langle u_iu_j^\top , X \rangle = 1$ is valid for ${\mathcal F}_{\rm basic}$. The same can be done by taking the products of the linear constraints $v_i^\top x = 1$ for all $i \in N$, etc.
In total, we distinguish three types of squared linear constraints that are summarized in Table~\ref{Table4Types}.

\begin{table}[H]
\begin{tabular}{@{}cc@{}}
\toprule
Type of squared linear constraint & Constraints on $X$                                                                          \\ \midrule
Type I                            & $\langle u_iu_i^\top, X \rangle = 1$ and $\langle v_iv_i^\top, X \rangle = 1$ for all $i \in N$; \\[1.ex]
Type II                           &                                                                                           $\langle u_iu_j^\top, X \rangle = 1$ and $\langle v_iv_j^\top, X\rangle = 1$ for all $i,j \in N, i \neq j$; \\[1.ex]
Type III & $\langle u_iv_j^\top, X \rangle = 1$ for all $i, j \in N$. \\ \bottomrule
\end{tabular}
\caption{Three types of valid squared  linear constraints for ${\mathcal F}_{\rm basic}$. \label{Table4Types}}
\end{table}

\noindent
We show below how the above mentioned valid constraints relate.
An interesting  result is that the squared linear constraints of Type II and III turn out to be redundant
when the Type I  constraints and some other constraints are added to \eqref{feasibleBasic}.

\begin{proposition} \label{TheoremSquaredLinearConstraints}
Let $x \in \mathbb{R}^m$ and $X \in \mathcal{S}^m$ be such that $\begin{pmatrix}
1 & x^\top \\ x & X
\end{pmatrix} \succeq 0$, $\diag(X) = x$, $\tr(X) = n$ and $\langle J, X \rangle = n^2$. If $\langle u_iu_i^\top, X \rangle = \langle v_iv_i^\top, X \rangle = 1$ for all $i \in N$, then
\begin{enumerate}[(i)]
\item the squared linear constraints of Type II and III are redundant;
\item the linear constraints $u_i^\top x = u_j^\top x = 1$ for all $i, j \in N$ are redundant.
\end{enumerate}
\end{proposition}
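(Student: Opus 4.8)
The plan is to work directly with the positive semidefiniteness of the extended matrix $Y := \begin{pmatrix} 1 & x^\top \\ x & X \end{pmatrix}$ together with the diagonal and trace/all-ones constraints, and to exploit the fact that these force certain linear combinations of rows of $Y$ to vanish. The key observation is this: if $a \in \mathbb{R}^m$ is a vector with $a^\top x = 1$ (coming from a single Type~I or linear equality) and also $\langle a a^\top, X \rangle = 1$, then the vector $w := \begin{pmatrix} -1 \\ a \end{pmatrix}$ satisfies $w^\top Y w = \langle aa^\top, X\rangle - 2 a^\top x + 1$. I would first try to show that under the stated hypotheses we can conclude $w^\top Y w = 0$ for the relevant $w$'s, and then invoke the standard fact that for $Y \succeq 0$, $w^\top Y w = 0$ implies $Y w = 0$; the identity $Yw = 0$ then yields exactly the missing linear and quadratic relations by reading off its coordinates.

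The first concrete step is to establish that the Type~I constraints, the diagonal constraint, the trace constraint and the all-ones constraint together imply the linear constraints $u_i^\top x = 1$ and $v_i^\top x = 1$ for all $i$, i.e.\ part (ii). Here is the mechanism I would use: since $\diag(X) = x \ge 0$ is not directly given, I instead note that $Y \succeq 0$ forces, for the principal $2\times 2$ minor indexed by $\{0, e\}$, the inequality $X_{ee} \ge x_e^2$, hence $X_{ee} \ge 0$; combined with $\sum_i u_i = \mathbf{1}_m$ (each arc has exactly one tail) one gets $\sum_{i \in N} \langle u_i u_i^\top, X\rangle + \sum_{i\neq j}\langle u_i u_j^\top, X\rangle = \langle J, X\rangle = n^2$, and since the $n$ Type~I terms sum to $n$, the off-diagonal Type~II-like sum equals $n^2 - n$. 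I would try to squeeze each term using PSD-ness of $2\times 2$ (or $3\times 3$) principal minors of $Y$ to force $\langle u_i^\top x\rangle$-type quantities to their extreme values — the cleanest route is: consider $w = \begin{pmatrix} -1 \\ u_i\end{pmatrix}$, compute $w^\top Y w = \langle u_iu_i^\top, X\rangle - 2u_i^\top x + 1 = 2 - 2u_i^\top x$, which is $\ge 0$, giving $u_i^\top x \le 1$; summing over all $i$ gives $\mathbf{1}_m^\top x \le n$, but $\tr(X) = n$ and $\diag(X) = x$ give $\mathbf{1}_m^\top x = n$, forcing $u_i^\top x = 1$ for every $i$, and symmetrically $v_i^\top x = 1$. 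That disposes of (ii), and as a byproduct $w^\top Y w = 0$ so $Yw = 0$ for $w = \begin{pmatrix}-1\\u_i\end{pmatrix}$.

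With $Y\begin{pmatrix}-1\\u_i\end{pmatrix} = 0$ in hand for every $i$ (and the analogous statement for $v_i$), part (i) should follow by pure linear algebra: $\langle u_i u_j^\top, X\rangle = u_i^\top X u_j$, and $Y\begin{pmatrix}-1\\u_j\end{pmatrix}=0$ says $X u_j = x$ (reading the bottom block) and $x^\top u_j = 1$ (reading the top entry); hence $u_i^\top X u_j = u_i^\top x = 1$, which is exactly the Type~II constraint $\langle u_i u_j^\top, X\rangle = 1$. The same computation with $v_i$ in place of the second $u$, or with $v$ in both slots, gives the remaining Type~II and all Type~III constraints. The main obstacle I anticipate is the bookkeeping in the second step — making sure that $\diag(X)=x$ plus $\tr(X)=n$ genuinely pins down $\mathbf{1}_m^\top x = n$ (it does, since $\mathbf 1_m^\top x = \mathbf 1_m^\top \diag(X) = \tr(X) = n$) and that the sign/extremality argument is airtight; once the vanishing $w^\top Y w = 0$ is secured, the rest is a short and essentially mechanical reading-off of the kernel relations of $Y$.
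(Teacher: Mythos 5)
Your argument is correct. For part (ii) you and the paper do essentially the same thing: bound $u_i^\top x$ above by $1$ using positive semidefiniteness (you via the quadratic form $(-1,u_i^\top)\, Y\, (-1,u_i^\top)^\top = 2 - 2u_i^\top x \ge 0$, the paper via the Schur complement $X - xx^\top \succeq 0$ and $(u_i^\top x)^2 \le 1$), and then squeeze using $\sum_{i}u_i^\top x = \mathbf{1}_m^\top x = \tr(X) = n$. For part (i), however, your route is genuinely different. The paper first shows $\langle u_iu_j^\top, X\rangle \le 1$ from $\langle (u_i-u_j)(u_i-u_j)^\top, X\rangle \ge 0$ and then expands the all-ones constraint $\langle J, X\rangle = n^2$ as a sum of $n^2$ such terms, forcing each to equal $1$. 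You instead upgrade the equality $w^\top Y w = 0$ for $w = (-1, u_j^\top)^\top$ (available once (ii) is proved) to the kernel relation $Yw = 0$, read off $Xu_j = x$, and conclude $u_i^\top X u_j = u_i^\top x = 1$, and likewise for every Type II and Type III combination. This buys you two things: the hypothesis $\langle J, X\rangle = n^2$ is never used, so your version of the proposition holds under strictly weaker assumptions; and the kernel relations you derive are exactly those the paper establishes separately in Lemma \ref{LemmaNotSlater} when proving failure of Slater's condition, so your proof makes that connection explicit. The price is that your (i) depends on (ii), whereas in the paper the two parts are proved independently. Your worry about $x \ge 0$ is moot: nonnegativity of $x$ is not needed anywhere in the final argument, since the squeeze only requires each $u_i^\top x \le 1$ together with the exact value of their sum.
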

\begin{proof}
$(i) \quad$ Let $i,j \in N$ with $i \neq j$, then we have,
\begin{align*}
\langle (u_i - u_j)(u_i - u_j)^\top, X \rangle & = \langle u_iu_i^\top + u_ju_j^\top - 2u_iu_j^\top, X \rangle \\
& = \langle u_iu_i^\top, X \rangle + \langle u_ju_j^\top, X \rangle - 2 \langle u_iu_j^\top, X \rangle \\
& = 2 - 2 \langle u_iu_j^\top, X \rangle  \geq 0,
\end{align*}
since $\langle u_iu_i^\top, X \rangle = \langle u_ju_j^\top, X \rangle = 1$ and $X \succeq 0$. From this it follows that $\langle u_iu_j^\top, X \rangle \leq 1$.

Conversely, as all arcs have exactly one starting node, we have $\bold{1}_m = \sum_{i \in N}u_i$. Using this, we can rewrite the matrix $J$ as $J = \left( \sum_{i \in N}u_i \right)\left( \sum_{i \in N}u_i \right)^\top$ and the constraint $\langle J, X \rangle = n^2$ as follows:
\begin{align*}
n^2 = \langle J, X \rangle & = \left\langle \left( \sum_{i \in N}u_i \right)\left( \sum_{i \in N}u_i \right)^\top, \, \,  X \right\rangle = \sum_{i \in N}\sum_{j \in N} \langle u_iu_j^\top, X \rangle.
\end{align*}
The right-hand side expression is a sum of $n^2$ elements for which $\langle u_iu_j^\top, X \rangle \leq 1$ for all $i,j \in N$. Since the sum has to be equal to $n^2$, it follows that $\langle u_iu_j^\top, X \rangle = 1$ for all $i,j \in N$, $i \neq j$. The other equalities can be proven in a similar fashion. \\ \\
$(ii) \quad$ Let $Y := X - xx^\top$. By the Schur complement, we know that $Y \succeq 0$. Now,
\begin{align*}
 1 = \langle u_iu_i^\top, X \rangle = \langle u_iu_i^\top, Y \rangle + \langle u_iu_i^\top, xx^\top \rangle = \langle u_iu_i^\top, Y \rangle + (u_i^\top x)^2.
\end{align*}
Since $\langle u_iu_i^\top, Y \rangle \geq 0$, it follows that $(u_i^\top x)^2 \leq 1$ and, consequently, $u_i^\top x \leq 1$ for all $i \in N$.

Now we rewrite the trace constraint $\bold{1}_m^\top x = n$ as
\begin{align*}
\bold{1}_m^\top x = \left( \sum_{i \in N}u_i\right)^\top x = \sum_{i \in N}u_i^\top x = n.
\end{align*}
Since each term $u_i^\top x$ is bounded by 1, equality is established only when $u_i^\top x = 1$ for all $i \in N$. In a similar way one can show that $v_i^\top x = 1$ for all $i \in N$.
\end{proof}

\noindent Observe that there exist $2n$ constraints of Type I.
We show how to merge these constraints to obtain a more compact formulation. To this end, we define the matrices $\tilde{U}, \tilde{V} \in \mathbb{R}^{m \times m}$ as follows:
\begin{align*}
\tilde{U} := \sum_{i \in N} \begin{pmatrix}
-1 \\ u_i
\end{pmatrix} \begin{pmatrix}
-1 \\ u_i
\end{pmatrix}^\top \quad \text{and} \quad \tilde{V} := \sum_{i \in N} \begin{pmatrix}
-1 \\ v_i
\end{pmatrix} \begin{pmatrix}
-1 \\ v_i
\end{pmatrix}^\top.
\end{align*}
We establish the following result.

\begin{proposition} \label{LemmaCompactForm} Let $x \in \mathbb{R}^m$ and $X \in \mathcal{S}^m$ be such that $\begin{pmatrix}
1 & x^\top \\ x & X
\end{pmatrix} \succeq 0$ and $\diag(X) = x$. Then the following statements are equivalent:
\begin{enumerate}[(i)]
\item $\tr(X) = n$ and $\langle u_iu_i^\top, X \rangle = \langle v_iv_i^\top , X \rangle = 1$ for all $i \in N$;
\item $\left \langle \tilde{U}, \begin{pmatrix}
1 & x^\top \\ x & X
\end{pmatrix} \right \rangle = \left \langle \tilde{V}, \begin{pmatrix}
1 & x^\top \\ x & X
\end{pmatrix} \right \rangle = 0.$
\end{enumerate}
\end{proposition}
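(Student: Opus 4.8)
The plan is to prove the two implications separately, after recording one algebraic identity and one standard linear-algebra fact. Write $Z := \begin{pmatrix} 1 & x^\top \\ x & X \end{pmatrix}$ and, for each $i \in N$, set $w_i := \begin{pmatrix} -1 \\ u_i \end{pmatrix}$ and $\bar w_i := \begin{pmatrix} -1 \\ v_i \end{pmatrix}$, so that $\tilde U = \sum_{i \in N} w_i w_i^\top$ and $\tilde V = \sum_{i \in N} \bar w_i \bar w_i^\top$ by definition, and hence $\langle \tilde U, Z \rangle = \sum_{i \in N} w_i^\top Z w_i$ and $\langle \tilde V, Z \rangle = \sum_{i \in N} \bar w_i^\top Z \bar w_i$. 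The first thing I would do is expand a single quadratic form: a direct computation with the block structure gives $w_i^\top Z w_i = 1 - 2\,u_i^\top x + u_i^\top X u_i = 1 - 2\,u_i^\top x + \langle u_i u_i^\top, X \rangle$, and similarly for $\bar w_i$ with $v_i$ in place of $u_i$. I would also note that each arc has exactly one tail and exactly one head, so $\sum_{i \in N} u_i = \sum_{i \in N} v_i = \bold{1}_m$, which combined with $\diag(X) = x$ yields $\tr(X) = \bold{1}_m^\top \diag(X) = \bold{1}_m^\top x = \sum_{i \in N} u_i^\top x$; this identity is the bridge between the trace constraint and the squared linear constraints of Type I.

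For $(i) \Rightarrow (ii)$ I would simply substitute. Under the assumptions in (i), $\langle u_i u_i^\top, X \rangle = 1$ makes each summand equal to $w_i^\top Z w_i = 2 - 2\,u_i^\top x$, so $\langle \tilde U, Z \rangle = 2n - 2 \sum_{i \in N} u_i^\top x = 2n - 2\,\tr(X) = 0$ using the trace constraint, and the identical computation with $v_i$ gives $\langle \tilde V, Z \rangle = 0$. This direction is a pure calculation with no obstacle.

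For $(ii) \Rightarrow (i)$ I would invoke positive semidefiniteness twice. Since $Z \succeq 0$, every term in $\langle \tilde U, Z \rangle = \sum_{i \in N} w_i^\top Z w_i = 0$ is nonnegative, so each vanishes; writing $Z = L L^\top$ gives $w_i^\top Z w_i = \| L^\top w_i \|^2$, so $w_i^\top Z w_i = 0$ forces $L^\top w_i = 0$ and therefore $Z w_i = 0$. Reading $Z w_i = 0$ coordinatewise, the zeroth coordinate gives $-1 + x^\top u_i = 0$, i.e.\ $u_i^\top x = 1$, and the lower block gives $-x + X u_i = 0$, i.e.\ $X u_i = x$; hence $\langle u_i u_i^\top, X \rangle = u_i^\top X u_i = u_i^\top x = 1$. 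Applying the same reasoning to $\langle \tilde V, Z \rangle = 0$ yields $X v_i = x$, $v_i^\top x = 1$ and $\langle v_i v_i^\top, X \rangle = 1$ for all $i \in N$. Finally, summing $u_i^\top x = 1$ over $i \in N$ and using $\sum_{i \in N} u_i = \bold{1}_m$ together with $\tr(X) = \bold{1}_m^\top x$ gives $\tr(X) = n$, which completes (i).

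The only step that is not purely mechanical is the passage from $w_i^\top Z w_i = 0$ to $Z w_i = 0$, which uses the standard fact that a positive semidefinite matrix and its associated quadratic form have the same kernel; I would dispatch this with the $L L^\top$ factorization above. Everything else is bookkeeping with the $(m+1) \times (m+1)$ block form of $Z$, so I do not expect any genuine difficulty.
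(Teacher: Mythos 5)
Your proof is correct. The forward direction and the opening moves of the converse coincide with the paper's: both expand $\langle \tilde U, Z\rangle$ as $\sum_{i\in N} w_i^\top Z w_i$ with $w_i = (-1, u_i^\top)^\top$, and both use $Z \succeq 0$ to force each summand $u_i^\top X u_i - 2u_i^\top x + 1$ to vanish. You diverge only at the final step of $(ii)\Rightarrow(i)$: the paper substitutes the Schur complement $Y := X - xx^\top \succeq 0$ to rewrite each vanishing term as $u_i^\top Y u_i + (u_i^\top x - 1)^2 = 0$, a sum of two nonnegative quantities, whence $u_i^\top x = 1$ and then $u_i^\top X u_i = 1$; you instead pass from $w_i^\top Z w_i = 0$ to $Zw_i = \bold{0}$ via the factorization $Z = LL^\top$ and read off $u_i^\top x = 1$ and $Xu_i = x$ coordinatewise. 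Both are sound. Your route has the small advantage of delivering the stronger conclusion $Xu_i = x$ for free and of reusing exactly the argument the paper later needs for Lemma~\ref{LemmaNotSlater} (that these vectors lie in $\Nul(Z)$), so the two results could share a single lemma; the paper's route avoids introducing the factorization and keeps everything at the level of scalar identities. Either way, the concluding step $\tr(X) = \bold{1}_m^\top x = \sum_{i\in N} u_i^\top x = n$ is identical.
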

\begin{proof}
It is not difficult to see that $(i) \Rightarrow (ii)$. We now show the reverse statement. We have
\begin{align*}
\left \langle \tilde{U}, \begin{pmatrix}
1 & x^\top \\ x & X
\end{pmatrix} \right \rangle = \left \langle \sum_{i \in N} \begin{pmatrix}
-1 \\ u_i
\end{pmatrix} \begin{pmatrix}
-1 \\ u_i
\end{pmatrix}^\top , \begin{pmatrix}
1 & x^\top \\ x & X
\end{pmatrix} \right \rangle = \sum_{i \in N} (u_i^\top X u_i - 2 u_i^\top x + 1) = 0.
\end{align*}
Since $\begin{pmatrix}
-1 \\ u_i
\end{pmatrix}\begin{pmatrix}
-1 \\ u_i
\end{pmatrix}^\top \succeq 0$, it follows that $u_i^\top Xu_i - 2u_i^\top x + 1 \geq 0$ for all $i \in N$. Combining this with the equality above, we conclude that $u_i^\top Xu_i - 2u_i^\top x + 1 = 0$ for all $i \in N$.

Now define $Y := X - xx^\top \succeq 0$. Then $u_i^\top Xu_i - 2u_i^\top x + 1 = 0$ can be rewritten as
\begin{align*}
u_i^\top (Y + xx^\top )u_i - 2u_i^\top x + 1 = 0 \quad \Leftrightarrow \quad u_i^\top Yu_i + (u_i^\top x - 1)^2 = 0.
\end{align*}
Since $u_i^\top Yu_i \geq 0$ and $(u_i^\top x - 1)^2 \geq 0$, it follows that $u_i^\top x = 1$, which in turn implies that $u_i^\top Xu_i = 2u_i^\top x - 1 = 1$. Similarly, one can prove that $\langle v_iv_i^\top, X \rangle = 1$ for all $i \in N$.

Finally, since $\bold{1}_m = \sum_{i \in N}u_i$, we have
\begin{align*}
\tr(X) = \bold{1}_m^\top x = \sum_{i \in N}u_i^\top x = n.
\end{align*}
We conclude that $(ii) \Rightarrow (i)$.
\end{proof}

\noindent
Proposition \ref{LemmaCompactForm} shows that instead of the trace constraint and the squared linear constraints,
we can equivalently include the merged squared linear constraints.
Let us now define the following set:
\begin{equation} \label{feasible}
\begin{array}{rl}
{\mathcal F}_{1} :=
\bigg \{

\begin{pmatrix}
1 & x^\top \\
x & X
\end{pmatrix}\in \mathcal{S}^{m+1} :

&~
\langle J, X \rangle = n^2,~
\left \langle \tilde{U} , \begin{pmatrix}
1 & x^\top \\ x & X
\end{pmatrix} \right \rangle = \left \langle \tilde{V} , \begin{pmatrix}
1 & x^\top \\ x & X
\end{pmatrix} \right \rangle = 0,~ \\[2.5ex]
&
~\text{diag}(X) = x,~
\begin{pmatrix}
1 & x^\top \\
x & X
\end{pmatrix} \succeq 0 \bigg \}.
\end{array}
\end{equation}
From the above discussion it follows that ${\mathcal F}_{1}  \subseteq {\mathcal F}_{\rm basic}$.
Let us now introduce our first SDP relaxation:
\begin{equation} \label{SDP1}
(SDP_1) \quad \min  \left \{ \langle Q, X \rangle ~:~
\begin{pmatrix}
1 & x^\top \\
x & X
\end{pmatrix} \in {\mathcal F}_{1} \right \}.
\end{equation}

\noindent In the sequel we show how to improve SDP relaxation \eqref{SDP1}.
Let us exploit the structure of a cycle cover to identify a zero pattern in $X$. For each $i \in N$, we know that there is exactly one arc $e$ in $\delta^+(i)$ with $x_e = 1$ and $x_f = 0$ for all other arcs $f$ leaving $i$. Hence, for each pair of distinct arcs $e, f \in \delta^+(i)$ we have $x_ex_f = 0$. This leads to the valid constraint $X_{ef} = 0$ for all $e, f \in \delta^+(i), e \neq f$. The same holds for the incoming arcs. We call these type of equalities the zero-structure constraints. We define:
\begin{align}
\mathcal{Z} := \{ (e,f) \in A \times A \, : \, \, \text{$e$ and $f$ start or end at the same node, $e \neq f$} \}. \label{defCaligraphicZ}
\end{align}
Then the zero-structure constraints read that $X_{ef} = 0$ for all $(e,f) \in \mathcal{Z}$.

Note that one may also add the  nonnegativity constraints on matrix variables in  $(SDP_1)$.
For that purpose, we define the cone of nonnegative symmetric  matrices, i.e.,
\begin{align*}
\mathcal{N}^m_+ :=\{
X \in \mathcal{S}^{m}:~X\geq 0 \}.
\end{align*}
We  show next that after adding nonnegativity constraints to the feasible set of $(SDP_1)$, the zero-structure constraints turn out to be redundant.

\begin{proposition}\label{prop:alloneNon}
Let  $x \in \mathbb{R}^m$ and $X \in \mathcal{S}^m$ be feasible for $(SDP_1)$.
If $X \in \mathcal{N}^m_+$, then $X_{ef} = 0$ for all $(e,f) \in \mathcal{Z}$. \label{LemmaZeroStructureRedundant}
\end{proposition}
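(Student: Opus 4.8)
The idea is to use the same positive-semidefinite $2\times 2$ minor trick that appears in the proofs of Propositions~\ref{TheoremSquaredLinearConstraints} and~\ref{LemmaCompactForm}, combined with nonnegativity. Fix a node $i \in N$ and consider the arcs in $\delta^+(i)$; an identical argument will handle $\delta^-(i)$. From Proposition~\ref{LemmaCompactForm} we know that any feasible point of $(SDP_1)$ satisfies $u_i^\top x = 1$ and $\langle u_iu_i^\top, X\rangle = u_i^\top X u_i = 1$ for all $i \in N$. Writing these out in terms of the arcs leaving $i$ gives $\sum_{e \in \delta^+(i)} x_e = 1$ and $\sum_{e, f \in \delta^+(i)} X_{ef} = 1$.

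The plan is then to subtract these two scalar identities and exploit $\diag(X) = x$. Since $X_{ee} = x_e$ for every arc $e$, we get
\begin{align*}
0 = \sum_{e,f \in \delta^+(i)} X_{ef} - \sum_{e \in \delta^+(i)} x_e = \sum_{e,f \in \delta^+(i)} X_{ef} - \sum_{e \in \delta^+(i)} X_{ee} = \sum_{\substack{e,f \in \delta^+(i) \\ e \neq f}} X_{ef}.
\end{align*}
Now the hypothesis $X \in \mathcal{N}^m_+$ forces every term $X_{ef}$ in this last sum to be nonnegative, and a sum of nonnegative reals that equals zero must be termwise zero. Hence $X_{ef} = 0$ for all distinct $e, f \in \delta^+(i)$. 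Running the same computation with $v_i$ in place of $u_i$ yields $X_{ef} = 0$ for all distinct $e, f \in \delta^-(i)$. Taking the union over all $i \in N$ covers exactly the pairs in $\mathcal{Z}$ as defined in~\eqref{defCaligraphicZ}, which completes the argument.

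There is essentially no hard part here: the only thing to be a little careful about is bookkeeping — making sure that $u_i^\top X u_i$ really expands to $\sum_{e,f\in\delta^+(i)} X_{ef}$ (which is immediate from the $0$-$1$ structure of $u_i$), and that $\mathcal{Z}$ is precisely the set of off-diagonal index pairs produced by ranging over all $\delta^+(i)$ and $\delta^-(i)$. One subtlety worth a sentence is that an arc $e$ could in principle lie in both $\delta^+(i)$ and $\delta^-(j)$ for the same pair through a self-loop, but since $G$ is simple this does not occur, and in any case it would not affect the conclusion. No appeal to the full PSD structure of the extended matrix is needed beyond what Proposition~\ref{LemmaCompactForm} already delivers; nonnegativity does all the remaining work.
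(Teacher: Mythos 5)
Your proof is correct and follows essentially the same route as the paper's: expand $\langle u_iu_i^\top,X\rangle=1$ into diagonal and off-diagonal parts over $\delta^+(i)$, use $\diag(X)=x$ and $u_i^\top x=1$ to cancel the diagonal contribution, and invoke nonnegativity to kill each off-diagonal term. The only cosmetic difference is that the paper cites Proposition~\ref{TheoremSquaredLinearConstraints} for $u_i^\top x=1$ while you obtain it from Proposition~\ref{LemmaCompactForm}; both are valid.
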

\begin{proof}
We prove the statement for the outgoing arcs. The proof for the incoming arcs is similar. Using Proposition \ref{LemmaCompactForm}, we know that $\langle u_iu_i^\top, X \rangle = 1$ for all $i \in N$. We rewrite this equality as:
\begin{align*}
1 = \langle u_iu_i^\top, X \rangle = \sum_{e \in A}\sum_{f \in A}(u_i)_e(u_i)_fX_{ef} = \sum_{e \in \delta^+(i)}X_{ee} + \sum_{\substack{e, f \in \delta^+(i), \\ e \neq f}}X_{ef}.
\end{align*}
Since $\text{diag}(X) = x$, we have $\sum_{e \in \delta^+(i)}X_{ee} = \sum_{e \in \delta^+(i)}x_e = u_i^\top x = 1$, where the last equality follows from Proposition \ref{TheoremSquaredLinearConstraints}. Thus, we have $\sum_{e, f \in \delta^+(i), e \neq f}X_{ef} = 0$,
from where it follows that $X_{ef} = 0$ for all $e,f \in \delta^+(i)$ with $e \neq f$.
\end{proof}

\noindent Let us now define our next, tighter SDP relaxation:
\begin{equation} \label{SDP2}
(SDP_2) \quad \min   \left \{ \langle Q, X \rangle ~:~
\begin{pmatrix}
1 & x^\top \\
x & X
\end{pmatrix} \in {\mathcal F}_{1} \cap \mathcal{N}^{m+1}_+ \right  \}.
\end{equation}

\noindent To further strengthen $(SDP_2)$, we consider an additional set of valid inequalities.
Namely, we consider  cuts that are related to the well-known Boolean Quadric Polytope
 introduced by Padberg \cite{Padberg}. The BQP of order $m$ is defined as
\begin{align*}
BQ^m := \Conv \left\{ (x,X) \in \mathbb{R}^m \times \mathbb{R}^{m(m-1)/2} \, : \, \, x \in \{0,1\}^m, \,\,  X_{ij} = x_ix_j \,\,\,   \forall 1 \leq i < j \leq m  \right \}.
\end{align*}
Since the matrix $X$ in our previous relaxations is such that $X_{ef}$ represents $x_ex_f$, the inequalities that are valid for $BQ^m$ are also valid for our SDP relaxations.
In \cite{Padberg} it is proven  that the following triangle inequalities  (written in our \textsc{QCCP} notation) define facets of $BQ^m$:
\begin{align*}
X_{ef} + X_{eg} \leq x_e + X_{fg} \qquad \text{for all }\quad e, f, g \in A, ~e \neq f, f \neq g, e \neq g.
\end{align*}
\noindent Although there are more facet-defining inequalities for the BQP, we consider only the above mentioned ones  in this paper.
Namely, our preliminary tests show that the triangle inequalities lead to the largest improvement of the SDP bounds.
Note that there are $\mathcal{O}(m^3)$ triangle inequalities
and that it is  challenging to solve even medium-size SDPs that include all triangle inequalities.

Let $\mathcal{T} \subseteq A \times A \times A$ denote the set of arc triples corresponding to the triangle inequalities,
and  $\mathcal{C}(\mathcal{T})$ be the polyhedron induced by these cuts, i.e.,
\begin{align*}
\mathcal{C}(\mathcal{T}) := \left\{ \begin{pmatrix}
1 & x^\top \\
x & X
\end{pmatrix} \in \mathcal{S}^{m+1} \, : \, \, X_{ef} + X_{eg} \leq X_{ee} + X_{fg} \,\,\,  \forall \, (e,f,g) \in \mathcal{T}  \right\},
\end{align*}
where we incorporated the fact that $x_e = X_{ee}$ for all $e \in A$ in our relaxations.
Then our strongest SDP relaxation is:
\begin{equation} \label{SDP3}
(SDP_3) \quad \min  \left \{ \langle Q, X \rangle ~:~
\begin{pmatrix}
1 & x^\top \\
x & X
\end{pmatrix} \in {\mathcal F}_{1} \cap \mathcal{N}^{m+1}_+ \cap  \mathcal{C}(\mathcal{T})  \right \}.
\end{equation}
By abuse of notation, we will also use $\mathcal{T}$ to denote a subset of the set of arc triples corresponding to the triangle inequalities within  a cutting plane environment.

\section{Graph-dependent facial reduction} \label{SectionSlater}
In this section we investigate the Slater feasibility of the relaxations constructed in Section \ref{SectionSDPrelaxations}.
We prove that the relaxations are not Slater feasible and show how to obtain facially reduced  relaxations.
We conclude this section by providing an algorithm that computes a sparse transformation matrix required  for the facial reduction.
 Each transformation matrix is graph specific, and the algorithm exploits the bipartite representation of the underlying graph.
Our algorithm can be downloaded\footnote{The code can be downloaded from \url{https://github.com/frankdemeijer/SDPforQCCP}.} and used whenever one needs to compute a basis for the flow space of the bipartite representation of a directed graph.

\subsection{Strict feasibility by facial reduction } \label{SubsectionFacialReduction}
Recall that Slater's constraint qualification holds  for an SDP relaxation
if there exists a  feasible solution  that is also positive definite.
The following lemma shows that Slater's constraint qualification does not hold for the SDP relaxation \eqref{SDP1}, and consequently, neither for \eqref{SDP2} and \eqref{SDP3}.
\begin{lemma} \label{LemmaNotSlater} Let $Y := \begin{pmatrix}
1 & x^\top \\ x & X
\end{pmatrix} \in \mathcal{S}^{m+1}_+$ be feasible for $(SDP_1)$. Then
\begin{align*}
\mathrm{Span} \left\{ \begin{pmatrix}
-1 \\ u_i
\end{pmatrix}, \, \begin{pmatrix} - 1 \\ v_i
\end{pmatrix} \, : \, \, i \in N \right\} \subseteq \mathrm{Nul}(Y).
\end{align*}
\end{lemma}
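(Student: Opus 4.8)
The plan is to show directly that each generating vector lies in $\mathrm{Nul}(Y)$, i.e.\ that $Y\begin{pmatrix} -1 \\ u_i\end{pmatrix} = \bold{0}_{m+1}$ and $Y\begin{pmatrix} -1 \\ v_i\end{pmatrix} = \bold{0}_{m+1}$ for every $i \in N$. Since the null space is a linear subspace, this immediately gives the claimed inclusion of the span. The natural route is to use the positive semidefiniteness of $Y$ together with the merged squared linear constraints $\langle \tilde U, Y\rangle = \langle \tilde V, Y\rangle = 0$ from the definition of $\mathcal F_1$ in \eqref{feasible}: for a PSD matrix, the inner product with a PSD matrix $\tilde U = \sum_i \begin{pmatrix} -1\\ u_i\end{pmatrix}\begin{pmatrix} -1\\ u_i\end{pmatrix}^\top$ vanishing forces each quadratic form $\begin{pmatrix} -1\\ u_i\end{pmatrix}^\top Y \begin{pmatrix} -1\\ u_i\end{pmatrix}$ to vanish.

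Concretely, I would first expand
\begin{align*}
0 = \langle \tilde U, Y\rangle = \sum_{i \in N} \begin{pmatrix} -1 \\ u_i\end{pmatrix}^\top Y \begin{pmatrix} -1 \\ u_i\end{pmatrix},
\end{align*}
and observe that each summand is nonnegative because $Y \succeq 0$. Hence every summand is zero: $\begin{pmatrix} -1 \\ u_i\end{pmatrix}^\top Y \begin{pmatrix} -1 \\ u_i\end{pmatrix} = 0$ for all $i \in N$. Now I invoke the standard fact that if $Y \succeq 0$ and $w^\top Y w = 0$, then $Yw = 0$ (write $Y = L^\top L$; then $\|Lw\|^2 = 0$, so $Lw = 0$, so $Yw = L^\top L w = 0$). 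Applying this with $w = \begin{pmatrix} -1 \\ u_i\end{pmatrix}$ yields $Y\begin{pmatrix} -1 \\ u_i\end{pmatrix} = \bold{0}$, and the identical argument with $\tilde V$ gives $Y\begin{pmatrix} -1 \\ v_i\end{pmatrix} = \bold{0}$. Taking linear combinations, every vector in the span is annihilated by $Y$, which is exactly the desired inclusion.

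There is no real obstacle here; the only thing to be careful about is citing the right ingredient. One could alternatively argue more elementarily using $\diag(X) = x$ and the explicit linear equalities $u_i^\top x = 1$, $\tr(X) = n$, etc.\ — writing $\begin{pmatrix} -1\\ u_i\end{pmatrix}^\top Y \begin{pmatrix} -1\\ u_i\end{pmatrix} = u_i^\top X u_i - 2 u_i^\top x + 1$ and showing this equals $0$ as in the proof of Proposition \ref{LemmaCompactForm} — but the cleanest path is simply to quote that $\langle \tilde U, Y\rangle = 0$ is part of the definition of $\mathcal F_1$ and combine it with the PSD-kernel fact. The mild subtlety worth a sentence is that vanishing of a \emph{sum} of nonnegative quadratic forms forces each to vanish, which is what links the single merged constraint back to the $n$ individual vectors $\begin{pmatrix} -1\\ u_i\end{pmatrix}$.
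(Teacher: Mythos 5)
Your proposal is correct and follows essentially the same route as the paper's proof: both use $\langle \tilde U, Y\rangle = 0$ together with the nonnegativity of each quadratic form $\bigl(\begin{smallmatrix}-1\\ u_i\end{smallmatrix}\bigr)^{\!\top} Y \bigl(\begin{smallmatrix}-1\\ u_i\end{smallmatrix}\bigr)$ to force each to vanish, and then invoke the standard PSD fact that $w^\top Y w = 0$ implies $Yw = 0$. Your writeup is slightly more explicit about the factorization $Y = L^\top L$, but the argument is the same.
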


\begin{proof}
It follows directly from the fact that $\langle \tilde{U}, Y \rangle = 0$ and positive semidefinite matrices having a nonnegative inner product that we have
\begin{align*}
\begin{pmatrix}
-1 & u_i^\top
\end{pmatrix} \begin{pmatrix}
1 & x^\top \\ x & X
\end{pmatrix} \begin{pmatrix}
-1 \\ u_i
\end{pmatrix} = 0,
\end{align*}
for all $i \in N$.
Since $Y \succeq 0$, this implies that $\begin{pmatrix}
1 & x^\top \\ x & X
\end{pmatrix} \begin{pmatrix}
-1 \\ u_i
\end{pmatrix} = \bold{0}_{m+1}$ for all $i \in N$.
Thus,  $\begin{pmatrix}
-1 \\ u_i
\end{pmatrix} \in \Nul(Y)$ for all $i \in N$. Similarly, one can prove that $\begin{pmatrix}
-1 \\ v_i
\end{pmatrix} \in \Nul(Y)$ for all $i \in N$.
\end{proof}

\noindent
Lemma \ref{LemmaNotSlater} shows  that our SDP relaxations are not Slater feasible.
Thus,  the feasible sets of the SDP relaxations are fully contained in one of the faces of $\mathcal{S}^{m+1}_+$.
For now, we only focus on the relaxation $(SDP_1)$. In order to find an equivalent relaxation for  $(SDP_1)$  that is Slater feasible,
we project the problem onto the minimal face containing the feasible set, i.e., apply facial reduction, see e.g., \cite{BorweinWolkowicz, DrusvyatskiyWolkowicz, Tuncel}.

To find the minimal face containing the feasible set of the SDP relaxation,
one needs to find its exposing vectors, i.e., the vectors orthogonal to the feasible set of the SDP relaxation.
It follows from Lemma \ref{LemmaNotSlater} that the following matrices satisfy that property:
\begin{align} \label{exposing}
\begin{pmatrix}
-1 \\
u_i
\end{pmatrix}\begin{pmatrix}
-1 \\
u_i
\end{pmatrix}^\top \quad \text{and} \quad \begin{pmatrix}
-1 \\
v_i
\end{pmatrix}\begin{pmatrix}
-1 \\
v_i
\end{pmatrix}^\top \quad \text{for all } i \in N.
\end{align}
Now, let $\mathcal{R}$ be defined as follows:
\begin{align} \label{DefinitionR}
\mathcal{R} := \left( \Span \left\{ \begin{pmatrix}
-1 \\ u_i
\end{pmatrix}, \, \begin{pmatrix} - 1 \\ v_i
\end{pmatrix} \, : \, \, i \in N \right\} \right)^\perp = \Nul \left( \begin{bmatrix}
-\bold{1}_n & U \\ - \bold{1}_n & V
\end{bmatrix} \right).
\end{align}
Observe that under Assumption \ref{AssumptionJempty} the rank of $\begin{bmatrix}
-\bold{1}_n & U \\ - \bold{1}_n & V
\end{bmatrix}$ equals the rank of $[U^\top, V^\top ]^\top$ which we defined to be $\alpha$, see (\ref{alpha}). From this it follows that $\dim(\mathcal{R}) = m + 1 - \alpha$. We now define $F_\mathcal{R}$ to be the subset of $\mathcal{S}^{m+1}_+$ that is orthogonal to the exposing vectors (\ref{exposing}), i.e.,
\[
F_{\mathcal{R}} := \{ X \in \mathcal{S}^{m+1}_+ \, : \, \, \Col(X) \subseteq \mathcal{R} \}.
\]
Since faces of $\mathcal{S}^{m+1}_+$ are known to be in correspondence with linear subspaces of $\mathbb{R}^{m+1}$ \cite{DrusvyatskiyWolkowicz}, $F_\mathcal{R}$ is a face of $\mathcal{S}^{m+1}_+$ containing the feasible set of $(SDP_1)$.
Later on we show that $F_\mathcal{R}$  is actually the minimal face with this property, see Theorem \ref{TheoremSlaterPoint}.

In order to derive an explicit expression of $F_\mathcal{R}$,
let $W \in \mathbb{R}^{(m+1) \times (m + 1 - \alpha)}$ be a matrix whose columns form a basis for $\mathcal{R}$. Then the face $F_\mathcal{R}$ can be equivalently written as:
\begin{align}\label{face}
F_\mathcal{R} = W\mathcal{S}^{m + 1 - \alpha}_+ W^\top.
\end{align}
This implies that any $Y\in \mathcal{S}^{m+1}_+$ that is feasible for $(SDP_1)$ can be written as $Y = WZW^\top$ for some $Z \in \mathcal{S}^{m+1-\alpha}_+$.
By substituting this term into $(SDP_1)$, we obtain an equivalent relaxation in a lower dimensional space.
As a direct byproduct, some of the original constraints become redundant.
The resulting relaxation is as  follows:
\begin{equation}\label{SDPs1}
(SDP_{S1}) \quad \min
\left \{
\langle W^\top\hat{Q}W, Z \rangle :~  \diag(WZW^\top ) = WZW^\top \bold{e}_0,~
\bold{e}_0^\top WZW^\top \bold{e}_0 = 1, ~Z \succeq 0
\right \},
\end{equation}
where $\hat{Q} := \begin{pmatrix}
0 & \bold{0}_m^\top \\ \bold{0}_m & Q
\end{pmatrix}$.
Let us define the feasible set of the above relaxation for future reference:
\begin{equation}
{\mathcal F}_{S1} := \left \{ Z \in \mathcal{S}^{m+1-\alpha}_+ :~
\diag(WZW^\top ) = WZW^\top \bold{e}_0,~
\bold{e}_0^\top WZW^\top \bold{e}_0 = 1, ~Z \succeq 0 \right \}.
\end{equation}

\noindent We show below that the SDP relaxations  \eqref{SDPs1} and \eqref{SDP1} are equivalent.
\begin{theorem}
The SDP relaxation $(SDP_{S1})$ is equivalent to the SDP relaxation $(SDP_1)$.
\end{theorem}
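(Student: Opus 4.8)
The plan is to show that the two relaxations have the same optimal value by exhibiting a value-preserving correspondence between their feasible sets. The core fact, established already via Lemma~\ref{LemmaNotSlater} together with the face characterization \eqref{face}, is that every $Y \in \mathcal{S}^{m+1}_+$ feasible for $(SDP_1)$ lies in $F_\mathcal{R} = W\mathcal{S}^{m+1-\alpha}_+W^\top$, hence can be written $Y = WZW^\top$ for some $Z \succeq 0$; conversely, every $Y$ of this form lies in $F_\mathcal{R} \subseteq \mathcal{S}^{m+1}_+$. So the substitution $Y = WZW^\top$ maps feasible points back and forth, and what remains is to check that the \emph{remaining} constraints of $(SDP_1)$ — namely the all-ones constraint, the two merged squared-linear constraints $\langle \tilde U, Y\rangle = \langle \tilde V, Y\rangle = 0$, the diagonal constraint $\diag(X) = x$, and the normalization $Y_{00}=1$ — translate exactly into the constraints defining ${\mathcal F}_{S1}$, and that the objective values agree.

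First I would write $Y = WZW^\top$ and observe that, because the columns of $W$ span $\mathcal{R} = (\Span\{(-1;u_i),(-1;v_i)\})^\perp$, we automatically have $W^\top(-1;u_i) = \mathbf{0}$ and $W^\top(-1;v_i) = \mathbf{0}$ for all $i \in N$. Consequently $\langle \tilde U, WZW^\top\rangle = \sum_{i\in N} (-1;u_i)^\top W Z W^\top (-1;u_i) = \sum_{i\in N} \|W^\top(-1;u_i)\|$-type terms that all vanish, so the constraints $\langle \tilde U, Y\rangle = \langle \tilde V, Y\rangle = 0$ become vacuous after the substitution — this is the ``some of the original constraints become redundant'' remark in the text. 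Similarly the all-ones constraint $\langle J, X\rangle = n^2$: since $\mathbf{1}_m = \sum_i u_i$, one checks it is a consequence of (a combination of) the squared-linear constraints via Proposition~\ref{TheoremSquaredLinearConstraints}/\ref{LemmaCompactForm}, hence also becomes redundant, which is why it does not appear in $(SDP_{S1})$. The constraints that genuinely survive are $\diag(X) = x$, rewritten in extended-matrix form as $\diag(Y) = Y\mathbf{e}_0$ (note $\diag(Y)_0 = Y_{00} = x^\top\mathbf{e}_0 = \ldots$, and the $e$-th entry reads $X_{ee} = x_e$), which becomes $\diag(WZW^\top) = WZW^\top\mathbf{e}_0$; and the normalization $Y_{00} = 1$, which becomes $\mathbf{e}_0^\top WZW^\top\mathbf{e}_0 = 1$. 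These are precisely the two constraints listed in ${\mathcal F}_{S1}$, so $Z \in {\mathcal F}_{S1}$ iff $WZW^\top$ is feasible for $(SDP_1)$.

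For the objective, I would note $\langle Q, X\rangle = \langle \hat Q, Y\rangle$ since $\hat Q$ is just $Q$ bordered by a zero row and column and $Y_{00}$ contributes nothing; then $\langle \hat Q, WZW^\top\rangle = \langle W^\top\hat Q W, Z\rangle$ by cyclicity of the trace. Hence the map $Z \mapsto WZW^\top$ sends feasible points of $(SDP_{S1})$ to feasible points of $(SDP_1)$ with equal objective, and (using $Y = WZW^\top$ for the fixed $Z$ obtained from the face representation of a given $Y$) the reverse map does the same; therefore the two infima coincide and an optimal solution of one yields an optimal solution of the other. I would close by remarking that the reverse map is well-defined at the level of optimal \emph{values} even if $Z$ is not unique for a given $Y$, since any choice works.

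The main obstacle I anticipate is purely bookkeeping rather than conceptual: one must be careful that the constraint $\diag(X) = x$ in $(SDP_1)$ is stated on the $m\times m$ block, whereas in $(SDP_{S1})$ it is written in the $(m+1)$-dimensional extended form $\diag(WZW^\top) = WZW^\top\mathbf{e}_0$, and one should verify these are genuinely equivalent given the $Y_{00}=1$ normalization (the zeroth coordinate of $\diag(Y) = Y\mathbf{e}_0$ reads $1 = x_0$-style tautology only once $Y_{00}=1$ is imposed, and the remaining coordinates reproduce $\diag(X)=x$). The other mild subtlety is making the redundancy claims precise — i.e.\ confirming that $\langle \tilde U, Y\rangle = \langle\tilde V, Y\rangle = 0$ and $\langle J, X\rangle = n^2$ are truly implied by ``$Z \succeq 0$ plus the surviving constraints'' and not silently needed — but this follows directly from $W^\top(-1;u_i) = W^\top(-1;v_i) = \mathbf{0}$ and Proposition~\ref{LemmaCompactForm}, so it should go through cleanly.
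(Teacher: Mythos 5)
Your proposal is correct and follows essentially the same route as the paper: substitute $Y = WZW^\top$, use the orthogonality $W^\top(-1,u_i^\top)^\top = W^\top(-1,v_i^\top)^\top = \mathbf{0}$ to see that the merged squared-linear and all-ones constraints become automatic, match the surviving constraints ($Y_{00}=1$ and $\diag(Y)=Y\mathbf{e}_0$) with those of ${\mathcal F}_{S1}$, and invoke the face representation \eqref{face} for the converse direction, with the objectives matching by cyclicity of the trace. The one quibble is your justification of $\langle J, X\rangle = n^2$: Proposition~\ref{TheoremSquaredLinearConstraints} takes that constraint as a hypothesis, so citing it there is circular — the paper instead derives it directly from $(-n,\mathbf{1}_m^\top)^\top = \sum_{i\in N}(-1,u_i^\top)^\top \perp \Col(W)$, which gives $\mathbf{1}_m^\top X\mathbf{1}_m = n^2 Y_{00} = n^2$; this is a one-line fix using exactly the orthogonality you already established for $\tilde U$ and $\tilde V$.
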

\begin{proof}
Let $Z$ be feasible for $(SDP_{S1})$ and define $Y := WZW^\top$, $X:= ( \bold{0}_m, I_m)Y(\bold{0}_m, I_m)^\top$ and $x := \diag(X)$.
Our goal is to show that $x$ and $X$ are feasible for $(SDP_1)$.

Note that the SDP constraint is trivially satisfied. Therefore, it remains to prove that the all-ones constraint and the merged squared linear constraints hold. Observe that
\begin{align*}
\begin{pmatrix}
n \\ \bold{1}_m
\end{pmatrix}^\top \begin{pmatrix}
1 & x^\top \\ x & X
\end{pmatrix} \begin{pmatrix}
-n \\ \bold{1}_m
\end{pmatrix} = \begin{pmatrix}
n \\ \bold{1}_m
\end{pmatrix}^\top WZW^\top \begin{pmatrix}
-n \\ \bold{1}_m
\end{pmatrix} = \sum_{i \in N} \begin{pmatrix}
n \\ \bold{1}_m
\end{pmatrix}^\top WZW^\top \begin{pmatrix}
-1 \\ u_i
\end{pmatrix} = 0,
\end{align*}
where the last equality follows from the construction of $W$.
Since the most left term in the expression above equals $-n^2 + \bold{1}_m^\top X\bold{1}_m$,  it follows that $\langle J, X \rangle = n^2$.

Next, we have
\begin{align*}
 \left\langle \tilde{U} , Y \right\rangle = \left\langle \sum_{i \in N} \begin{pmatrix}
-1 \\ u_i
\end{pmatrix} \begin{pmatrix}
-1 \\ u_i
\end{pmatrix}^\top, \, WZW^\top \right\rangle = \sum_{i \in N} \left\langle W^\top \begin{pmatrix}
-1 \\ u_i
\end{pmatrix} \begin{pmatrix}
-1 \\ u_i
\end{pmatrix}^\top W, \, Z \right\rangle = 0,
\end{align*}
since the columns of $W$ are orthogonal to $(-1, u_i^\top )^\top$ for all $i \in N$. In a similar fashion we can show that $\langle \tilde{V}, Y \rangle = 0$ for all $i \in N$.
We conclude that the matrix $X$ and vector $x$ obtained from $(SDP_{S1})$ are feasible for $(SDP_1)$.

Conversely, let $Y$ be feasible for  $(SDP_1)$.
Then it follows from \eqref{face} that there exists a matrix $Z \succeq 0$ such that $Y = WZW^\top$.
Since the objective functions of $(SDP_1)$ and $(SDP_{S1})$ coincide, we conclude that the two relaxations are equivalent.
\end{proof}

\noindent We now prove that $(SDP_{S1})$  is  indeed Slater feasible, see also \cite{Tuncel}.

\begin{theorem} \label{TheoremSlaterPoint}
The relaxation $(SDP_{S1})$ contains a Slater feasible point.
\end{theorem}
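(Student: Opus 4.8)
To show that $(SDP_{S1})$ has a Slater point, I need to exhibit a matrix $Z \in \mathcal{S}^{m+1-\alpha}$ with $Z \succ 0$ (strictly positive definite) that is feasible for $(SDP_{S1})$, i.e., that satisfies the two affine constraints $\diag(WZW^\top) = WZW^\top \bold{e}_0$ and $\bold{e}_0^\top W Z W^\top \bold{e}_0 = 1$. Equivalently, by the correspondence $Y = WZW^\top$ between $\mathcal{S}^{m+1-\alpha}_+$ and the face $F_{\mathcal{R}}$, I need a feasible matrix $Y$ for $(SDP_1)$ whose rank equals $\dim(\mathcal{R}) = m+1-\alpha$, i.e., a maximal-rank (relative interior) feasible point of $(SDP_1)$. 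Then $Y = WZW^\top$ forces $Z \succ 0$ since $W$ has full column rank.

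**Constructing the point.** The natural candidate is an average (a strict convex combination) of the rank-one matrices coming from actual cycle covers. Concretely, by Assumption~\ref{AssumptionJempty}, for every arc $e$ there is a cycle cover $x^{(e)} \in P$ with $x^{(e)}_e = 1$; let $Y_e := \begin{pmatrix} 1 \\ x^{(e)} \end{pmatrix}\begin{pmatrix} 1 \\ x^{(e)} \end{pmatrix}^\top$, each of which is feasible for $(SDP_1)$ (it satisfies $\diag(X)=x$, the all-ones constraint $\bold{1}^\top x = n$, and the merged squared-linear constraints, all of which hold for genuine $0/1$ cycle-cover vectors). Then set
\begin{align*}
Y^\star := \frac{1}{|A|}\sum_{e \in A} Y_e,
\end{align*}
which is again feasible for $(SDP_1)$ by convexity of ${\mathcal F}_1$. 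I would then argue that $\Col(Y^\star) = \mathcal{R}$: the inclusion $\Col(Y^\star)\subseteq\mathcal{R}$ follows from Lemma~\ref{LemmaNotSlater} (feasibility forces the column space into $\mathcal{R}$), and for the reverse inclusion I use that $\mathcal{R} = \Nul\!\left(\begin{bmatrix} -\bold{1}_n & U \\ -\bold{1}_n & V\end{bmatrix}\right)$ and that $\Col(Y^\star)$ is spanned by the vectors $\begin{pmatrix} 1 \\ x^{(e)}\end{pmatrix}$. One shows this spanning set already spans all of $\mathcal{R}$: since $\aff(P)$ has dimension $m-\alpha$ by Lemma~\ref{LemmaDimensionConvP} and contains a point with $x_e = 1$ for each $e$ (no arc is identically zero, by $\mathcal{J}=\emptyset$), the homogenized vectors $\{(1, x^\top)^\top : x \in P\}$ span a space of dimension $m+1-\alpha = \dim(\mathcal{R})$, hence span $\mathcal{R}$ exactly. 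Since $Y^\star$ is a strictly positive combination of the $Y_e$, its column space equals the span of all $(1,x^{(e)\top})^\top$; enlarging the sum to include one cover through each $x \in P$ (or noting the $x^{(e)}$ already suffice) gives $\Col(Y^\star) = \mathcal{R}$, so $\rank(Y^\star) = m+1-\alpha$.

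**Finishing.** With $\Col(Y^\star) = \mathcal{R} = \Col(W)$ and $W$ of full column rank, write $Y^\star = W Z^\star W^\top$ for a unique $Z^\star \in \mathcal{S}^{m+1-\alpha}$; since $\rank(Y^\star) = m+1-\alpha$ and $W$ is injective, $Z^\star$ has full rank and is positive semidefinite, hence $Z^\star \succ 0$. Because $Y^\star$ is feasible for $(SDP_1)$, the matrix $Z^\star$ satisfies the defining equations of ${\mathcal F}_{S1}$ (this is exactly the content of the equivalence theorem just proved), so $Z^\star$ is a Slater point for $(SDP_{S1})$.

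**Main obstacle.** The routine parts are checking that each $Y_e$ satisfies the constraints and that $W$ injective forces $Z^\star \succ 0$. The crux is the dimension/spanning argument: showing $\Col(Y^\star) = \mathcal{R}$, equivalently that the homogenized cycle-cover vectors span the whole null space $\mathcal{R}$ rather than a proper subspace. This is where Assumption~\ref{AssumptionJempty} ($\mathcal{J}=\emptyset$) and Lemma~\ref{LemmaDimensionConvP} do the real work — one must know precisely that $\dim(\aff(P)) = m-\alpha$ and translate it, via homogenization, into the claim that $\dim(\Span\{(1,x^\top)^\top : x \in P\}) = m+1-\alpha$. If the paper's $W$ is constructed only from the $u_i, v_i$ orthogonality (which it is), this identification of the span with $\mathcal{R}$ is the single nontrivial step; everything else is bookkeeping.
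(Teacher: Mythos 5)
Your proposal is correct and follows essentially the same route as the paper: both take a strict convex combination of the rank-one matrices $\binom{1}{x}\binom{1}{x}^{\!\top}$ over cycle covers and use Lemma~\ref{LemmaDimensionConvP} to conclude that the homogenized cycle-cover vectors span all of $\mathcal{R}$, which forces the corresponding $Z$ to be positive definite. The one caveat is that your initial family $\{x^{(e)}\}_{e\in A}$ (one cover through each arc) need not contain $m+1-\alpha$ affinely independent covers, so the parenthetical claim that ``the $x^{(e)}$ already suffice'' is unjustified; your fallback of summing over all of $P$ (the paper simply picks $m+1-\alpha$ affinely independent covers directly, which exist by the dimension lemma) is what actually closes the argument.
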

\begin{proof}
Since $\Conv(P)$ has dimension $m - \alpha$, see Lemma \ref{LemmaDimensionConvP}, it follows that there exists an affinely independent set of vectors $\{x_1, ..., x_{m+1 - \alpha}\} \subseteq P$. Because of the affinely independence of these vectors, the set
\begin{align*}
\left\{ \begin{pmatrix}
1 \\ x_1
\end{pmatrix}, \begin{pmatrix}
1 \\ x_2
\end{pmatrix}, ..., \begin{pmatrix}
1 \\ x_{m+1 - \alpha}
\end{pmatrix} \right\}
\end{align*}
is linearly independent in $\mathbb{R}^{m+1}$. Since $(1, x_i^\top )^\top \in \mathcal{R}$ for all $i = 1, ..., m+1 - \alpha$ and the columns of $W$ form a basis for $\mathcal{R}$,
there exist vectors $y_1, ..., y_{m+1-\alpha} \in \mathbb{R}^{m+1-\alpha}$ such that $W y_i = (1, x_i^\top )^\top$ for all $i = 1, ..., m+1-\alpha$.
Moreover, the vectors $y_i$ are linearly independent in $\mathbb{R}^{m+1-\alpha}$ because of the linear independence of the vectors $Wy_i$.

We define
\begin{align*}
Z_{\lambda} := \sum_{i = 1}^{m+1-\alpha} \lambda_i y_iy_i^\top ,
\end{align*}
where $\lambda_i \geq 0$ for all $i = 1, ..., m+1 -\alpha$ and $\bold{1}^\top \lambda = 1$, and rewrite $WZ_\lambda W^\top$ as follows:
\begin{align*}
W Z_\lambda W^\top = \sum_{i = 1}^{m+1 -\alpha} \lambda_i Wy_i(Wy_i)^\top = \sum_{i = 1}^{m+1-\alpha} \lambda_i \begin{pmatrix}
1 \\ x_i
\end{pmatrix}\begin{pmatrix}
1 \\ x_i
\end{pmatrix}^\top.
\end{align*}
It is not difficult to see that $Z_\lambda$ is feasible for $(SDP_{S1})$. By taking $\lambda_i > 0$ for all $i = 1, ..., m+1 - \alpha$, the resulting matrix $Z_\lambda$ is non-singular, which implies that $Z_\lambda \succ 0$. Hence, $(SDP_{S1})$ contains a Slater feasible point.
\end{proof}

\noindent Note that the key in the proof of Theorem \ref{TheoremSlaterPoint} is the known dimension of $\Conv(P)$.

Continuing in the same vein, one can show that the following SDP relaxation is equivalent to the SDP relaxation \eqref{SDP2}:
\begin{equation}\label{SDPS2}
(SDP_{S2}) \quad \min
\left \{
\langle W^\top\hat{Q}W, Z \rangle :~
Z\in {\mathcal F}_{S1}, ~
~WZW^\top \in \mathcal{N}^{m+1}_+
\right \},
\end{equation}
and the following relaxation equivalent to the SDP relaxation \eqref{SDP3}:
\begin{equation}\label{SDPS3}
(SDP_{S3}) \quad \min
\left \{
\langle W^\top\hat{Q}W, Z \rangle :~
Z\in {\mathcal F}_{S1},  ~WZW^\top \in \mathcal{N}^{m+1}_+ \cap ~\mathcal{C}(\mathcal{T})
\right \}.
\end{equation}

\subsection{A polynomial time algorithm for the transformation matrix} \label{sec:polyMatrix}
Although the subspace $\mathcal{R}$ has been defined algebraically in Section \ref{SubsectionFacialReduction}, we now focus on its relation with  the graph $G$. This leads to a polynomial time algorithm for computing a sparse transformation matrix $W$ that depends on the considered graph.
Although one can compute $W$ numerically, we require its sparse expression for  efficient implementation of our cutting plane algorithm, see Section \ref{SectionSolving}.\\

\noindent Recall that the columns of $W$ form a basis for the subspace $\mathcal{R}$, see (\ref{DefinitionR}). A natural way to construct $W$ is as follows: let $\bar{x} \in P$ be the characteristic vector of any cycle cover in $G$. Moreover, let $\overline{W} \in \mathbb{R}^{m \times (m - \alpha)}$ be a matrix whose columns form a basis for $\Nul([U^\top, V^\top ]^\top )$. Then, the matrix
\begingroup
\renewcommand*{\arraystretch}{1.5}
\begin{align}
W := \begin{bmatrix}
1 & \bold{0}_{m - \alpha}^\top \\
\overline{x} & \overline{W}
\end{bmatrix}
\end{align}
\endgroup
forms a basis for the subspace $\mathcal{R}$. Finding a sparse expression for $W$ now boils down to finding a sparse expression for $\overline{W}$. For that purpose, we focus on a graph $B(G)$ that is induced by $G$, the so-called bipartite representation of $G$, which is introduced by Bang-Jensen and Gutin \cite{Bang-JensenGutin}. The graph $B(G) = (V_1 \cup V_2, E)$ is an undirected bipartite graph where $V_1$ and $V_2$ are copies of the set $N$ and the edge set $E$ is defined as:
\begin{align*}
E = \left \{ \{i,j\} \in V_1 \times V_2 \, : \, \, (i,j) \in A \right \}.
\end{align*}
By construction, each arc in $G$ corresponds to exactly one edge in $B(G)$, where the orientation in $G$ determines the configuration of the edges in $B(G)$.
Figure~\ref{FigureBipartiteRepresentation} shows an example of $G$ and its corresponding bipartite representation $B(G)$. Observe that a cycle cover in $G$ corresponds to a perfect matching in $B(G)$ and vice versa.

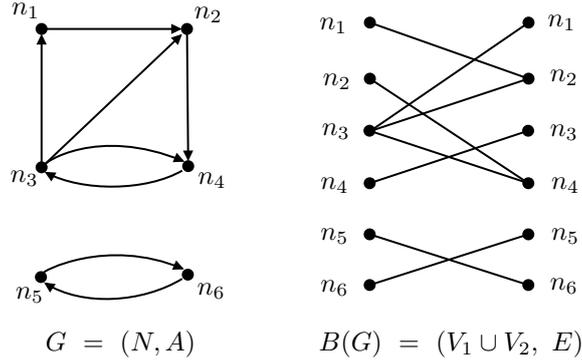
\begin{figure}[h]
\centering
\tikzset{every picture/.style={line width=0.75pt}} 

\begin{tikzpicture}[scale = 0.6, x=0.75pt,y=0.75pt,yscale=-1,xscale=1]

\draw  [fill={rgb, 255:red, 0; green, 0; blue, 0 }  ,fill opacity=1 ] (113,46.15) .. controls (113,43.86) and (114.86,42) .. (117.15,42) .. controls (119.44,42) and (121.3,43.86) .. (121.3,46.15) .. controls (121.3,48.44) and (119.44,50.3) .. (117.15,50.3) .. controls (114.86,50.3) and (113,48.44) .. (113,46.15) -- cycle ;

\draw  [color={rgb, 255:red, 0; green, 0; blue, 0 }  ,draw opacity=1 ]   (121.3,46.15) -- (232,46.15) ;
\draw [shift={(234,46.15)}, rotate = 180] [fill={rgb, 255:red, 0; green, 0; blue, 0 }  ,fill opacity=1 ][line width=0.75]  [draw opacity=0] (8.93,-4.29) -- (0,0) -- (8.93,4.29) -- cycle    ;

\draw    (117.15,162.15) -- (232.7,51.78) ;
\draw [shift={(234.15,50.4)}, rotate = 496.31] [fill={rgb, 255:red, 0; green, 0; blue, 0 }  ][line width=0.75]  [draw opacity=0] (8.93,-4.29) -- (0,0) -- (8.93,4.29) -- cycle    ;

\draw  [color={rgb, 255:red, 0; green, 0; blue, 0 }  ,draw opacity=1 ]   (238.15,46.15) -- (239.13,155) ;
\draw [shift={(239.15,157)}, rotate = 269.48] [fill={rgb, 255:red, 0; green, 0; blue, 0 }  ,fill opacity=1 ][line width=0.75]  [draw opacity=0] (8.93,-4.29) -- (0,0) -- (8.93,4.29) -- cycle    ;

\draw    (120.65,159.4) .. controls (117.66,159.55) and (162.7,126.73) .. (235.06,156.94) ;
\draw [shift={(236.15,157.4)}, rotate = 203.01] [fill={rgb, 255:red, 0; green, 0; blue, 0 }  ][line width=0.75]  [draw opacity=0] (8.93,-4.29) -- (0,0) -- (8.93,4.29) -- cycle    ;

\draw [color={rgb, 255:red, 0; green, 0; blue, 0 }  ,draw opacity=1 ]   (234.15,164.9) .. controls (204.6,182.14) and (151.28,182.88) .. (121.02,166.65) ;
\draw [shift={(119.65,165.9)}, rotate = 389.53999999999996] [fill={rgb, 255:red, 0; green, 0; blue, 0 }  ,fill opacity=1 ][line width=0.75]  [draw opacity=0] (8.93,-4.29) -- (0,0) -- (8.93,4.29) -- cycle    ;

\draw  [color={rgb, 255:red, 0; green, 0; blue, 0 }  ,draw opacity=1 ]   (117.15,158) -- (117.15,52.3) ;
\draw [shift={(117.15,50.3)}, rotate = 450] [fill={rgb, 255:red, 0; green, 0; blue, 0 }  ,fill opacity=1 ][line width=0.75]  [draw opacity=0] (8.93,-4.29) -- (0,0) -- (8.93,4.29) -- cycle    ;

\draw  [fill={rgb, 255:red, 0; green, 0; blue, 0 }  ,fill opacity=1 ] (234,46.15) .. controls (234,43.86) and (235.86,42) .. (238.15,42) .. controls (240.44,42) and (242.3,43.86) .. (242.3,46.15) .. controls (242.3,48.44) and (240.44,50.3) .. (238.15,50.3) .. controls (235.86,50.3) and (234,48.44) .. (234,46.15) -- cycle ;
\draw  [fill={rgb, 255:red, 0; green, 0; blue, 0 }  ,fill opacity=1 ] (235,161.15) .. controls (235,158.86) and (236.86,157) .. (239.15,157) .. controls (241.44,157) and (243.3,158.86) .. (243.3,161.15) .. controls (243.3,163.44) and (241.44,165.3) .. (239.15,165.3) .. controls (236.86,165.3) and (235,163.44) .. (235,161.15) -- cycle ;
\draw  [fill={rgb, 255:red, 0; green, 0; blue, 0 }  ,fill opacity=1 ] (113,162.15) .. controls (113,159.86) and (114.86,158) .. (117.15,158) .. controls (119.44,158) and (121.3,159.86) .. (121.3,162.15) .. controls (121.3,164.44) and (119.44,166.3) .. (117.15,166.3) .. controls (114.86,166.3) and (113,164.44) .. (113,162.15) -- cycle ;
\draw  [fill={rgb, 255:red, 0; green, 0; blue, 0 }  ,fill opacity=1 ] (112.46,254.15) .. controls (112.46,251.86) and (114.33,250) .. (116.64,250) .. controls (118.95,250) and (120.82,251.86) .. (120.82,254.15) .. controls (120.82,256.44) and (118.95,258.3) .. (116.64,258.3) .. controls (114.33,258.3) and (112.46,256.44) .. (112.46,254.15) -- cycle ;
\draw  [fill={rgb, 255:red, 0; green, 0; blue, 0 }  ,fill opacity=1 ] (234.67,252.12) .. controls (234.67,249.82) and (236.54,247.97) .. (238.85,247.97) .. controls (241.16,247.97) and (243.03,249.82) .. (243.03,252.12) .. controls (243.03,254.41) and (241.16,256.27) .. (238.85,256.27) .. controls (236.54,256.27) and (234.67,254.41) .. (234.67,252.12) -- cycle ;
\draw  [color={rgb, 255:red, 0; green, 0; blue, 0 }  ,draw opacity=1 ]   (119.14,250.3) .. controls (116.14,250.45) and (170.6,219.91) .. (234.68,247.18) ;
\draw [shift={(235.65,247.6)}, rotate = 203.47] [fill={rgb, 255:red, 0; green, 0; blue, 0 }  ,fill opacity=1 ][line width=0.75]  [draw opacity=0] (8.93,-4.29) -- (0,0) -- (8.93,4.29) -- cycle    ;

\draw  [color={rgb, 255:red, 0; green, 0; blue, 0 }  ,draw opacity=1 ]   (234.15,255.6) .. controls (201.15,275.79) and (151.18,279.01) .. (120.54,259.51) ;
\draw [shift={(119.15,258.6)}, rotate = 393.90999999999997] [fill={rgb, 255:red, 0; green, 0; blue, 0 }  ,fill opacity=1 ][line width=0.75]  [draw opacity=0] (8.93,-4.29) -- (0,0) -- (8.93,4.29) -- cycle    ;

\draw  [fill={rgb, 255:red, 0; green, 0; blue, 0 }  ,fill opacity=1 ] (386.17,40.82) .. controls (386.17,38.52) and (388.02,36.67) .. (390.32,36.67) .. controls (392.61,36.67) and (394.47,38.52) .. (394.47,40.82) .. controls (394.47,43.11) and (392.61,44.97) .. (390.32,44.97) .. controls (388.02,44.97) and (386.17,43.11) .. (386.17,40.82) -- cycle ;
\draw  [fill={rgb, 255:red, 0; green, 0; blue, 0 }  ,fill opacity=1 ] (386,218.32) .. controls (386,216.02) and (387.86,214.17) .. (390.15,214.17) .. controls (392.44,214.17) and (394.3,216.02) .. (394.3,218.32) .. controls (394.3,220.61) and (392.44,222.47) .. (390.15,222.47) .. controls (387.86,222.47) and (386,220.61) .. (386,218.32) -- cycle ;
\draw  [fill={rgb, 255:red, 0; green, 0; blue, 0 }  ,fill opacity=1 ] (386,175.48) .. controls (386,173.19) and (387.86,171.33) .. (390.15,171.33) .. controls (392.44,171.33) and (394.3,173.19) .. (394.3,175.48) .. controls (394.3,177.78) and (392.44,179.63) .. (390.15,179.63) .. controls (387.86,179.63) and (386,177.78) .. (386,175.48) -- cycle ;
\draw  [fill={rgb, 255:red, 0; green, 0; blue, 0 }  ,fill opacity=1 ] (385.83,131.48) .. controls (385.83,129.19) and (387.69,127.33) .. (389.98,127.33) .. controls (392.28,127.33) and (394.13,129.19) .. (394.13,131.48) .. controls (394.13,133.78) and (392.28,135.63) .. (389.98,135.63) .. controls (387.69,135.63) and (385.83,133.78) .. (385.83,131.48) -- cycle ;
\draw  [fill={rgb, 255:red, 0; green, 0; blue, 0 }  ,fill opacity=1 ] (386.17,87.82) .. controls (386.17,85.52) and (388.02,83.67) .. (390.32,83.67) .. controls (392.61,83.67) and (394.47,85.52) .. (394.47,87.82) .. controls (394.47,90.11) and (392.61,91.97) .. (390.32,91.97) .. controls (388.02,91.97) and (386.17,90.11) .. (386.17,87.82) -- cycle ;
\draw  [fill={rgb, 255:red, 0; green, 0; blue, 0 }  ,fill opacity=1 ] (386,260.82) .. controls (386,258.52) and (387.86,256.67) .. (390.15,256.67) .. controls (392.44,256.67) and (394.3,258.52) .. (394.3,260.82) .. controls (394.3,263.11) and (392.44,264.97) .. (390.15,264.97) .. controls (387.86,264.97) and (386,263.11) .. (386,260.82) -- cycle ;
\draw  [fill={rgb, 255:red, 0; green, 0; blue, 0 }  ,fill opacity=1 ] (518.17,40.82) .. controls (518.17,38.52) and (520.02,36.67) .. (522.32,36.67) .. controls (524.61,36.67) and (526.47,38.52) .. (526.47,40.82) .. controls (526.47,43.11) and (524.61,44.97) .. (522.32,44.97) .. controls (520.02,44.97) and (518.17,43.11) .. (518.17,40.82) -- cycle ;
\draw  [fill={rgb, 255:red, 0; green, 0; blue, 0 }  ,fill opacity=1 ] (518,218.32) .. controls (518,216.02) and (519.86,214.17) .. (522.15,214.17) .. controls (524.44,214.17) and (526.3,216.02) .. (526.3,218.32) .. controls (526.3,220.61) and (524.44,222.47) .. (522.15,222.47) .. controls (519.86,222.47) and (518,220.61) .. (518,218.32) -- cycle ;
\draw  [fill={rgb, 255:red, 0; green, 0; blue, 0 }  ,fill opacity=1 ] (518,175.48) .. controls (518,173.19) and (519.86,171.33) .. (522.15,171.33) .. controls (524.44,171.33) and (526.3,173.19) .. (526.3,175.48) .. controls (526.3,177.78) and (524.44,179.63) .. (522.15,179.63) .. controls (519.86,179.63) and (518,177.78) .. (518,175.48) -- cycle ;
\draw  [fill={rgb, 255:red, 0; green, 0; blue, 0 }  ,fill opacity=1 ] (517.83,131.48) .. controls (517.83,129.19) and (519.69,127.33) .. (521.98,127.33) .. controls (524.28,127.33) and (526.13,129.19) .. (526.13,131.48) .. controls (526.13,133.78) and (524.28,135.63) .. (521.98,135.63) .. controls (519.69,135.63) and (517.83,133.78) .. (517.83,131.48) -- cycle ;
\draw  [fill={rgb, 255:red, 0; green, 0; blue, 0 }  ,fill opacity=1 ] (518.17,87.82) .. controls (518.17,85.52) and (520.02,83.67) .. (522.32,83.67) .. controls (524.61,83.67) and (526.47,85.52) .. (526.47,87.82) .. controls (526.47,90.11) and (524.61,91.97) .. (522.32,91.97) .. controls (520.02,91.97) and (518.17,90.11) .. (518.17,87.82) -- cycle ;
\draw  [fill={rgb, 255:red, 0; green, 0; blue, 0 }  ,fill opacity=1 ] (518,260.82) .. controls (518,258.52) and (519.86,256.67) .. (522.15,256.67) .. controls (524.44,256.67) and (526.3,258.52) .. (526.3,260.82) .. controls (526.3,263.11) and (524.44,264.97) .. (522.15,264.97) .. controls (519.86,264.97) and (518,263.11) .. (518,260.82) -- cycle ;
\draw [color={rgb, 255:red, 0; green, 0; blue, 0 }  ,draw opacity=1 ]   (390.32,40.82) -- (522.32,87.82) ;

\draw    (390.48,131.82) -- (522.32,87.82) ;

\draw  [color={rgb, 255:red, 0; green, 0; blue, 0 }  ,draw opacity=1 ]   (390.32,87.82) -- (522.15,175.48) ;

\draw    (389.98,131.48) -- (522.15,175.48) ;

\draw  [color={rgb, 255:red, 0; green, 0; blue, 0 }  ,draw opacity=1 ]   (390.15,175.48) -- (521.98,131.48) ;

\draw  [color={rgb, 255:red, 0; green, 0; blue, 0 }  ,draw opacity=1 ]   (390.48,131.82) -- (522.32,40.82) ;

\draw  [color={rgb, 255:red, 0; green, 0; blue, 0 }  ,draw opacity=1 ]   (390.15,260.82) -- (522.15,218.32) ;

\draw  [color={rgb, 255:red, 0; green, 0; blue, 0 }  ,draw opacity=1 ]   (390.15,218.32) -- (522.15,260.82) ;

\draw (104,31) node  [align=left] {$\displaystyle n_{1}$};
\draw (360,43) node  [align=left] {$\displaystyle n_{1}$};
\draw (550,41) node  [align=left] {$\displaystyle n_{1}$};
\draw (256,34) node  [align=left] {$\displaystyle n_{2}$};
\draw (363,90) node  [align=left] {$\displaystyle n_{2}$};
\draw (552,87) node  [align=left] {$\displaystyle n_{2}$};
\draw (103,171) node  [align=left] {$\displaystyle n_{3}$};
\draw (362,132) node  [align=left] {$\displaystyle n_{3}$};
\draw (552,133) node  [align=left] {$\displaystyle n_{3}$};
\draw (259,173) node  [align=left] {$\displaystyle n_{4}$};
\draw (361,177) node  [align=left] {$\displaystyle n_{4}$};
\draw (553,177) node  [align=left] {$\displaystyle n_{4}$};
\draw (107.42,269) node  [align=left] {$\displaystyle n_{5}$};
\draw (361,220) node  [align=left] {$\displaystyle n_{5}$};
\draw (553,220) node  [align=left] {$\displaystyle n_{5}$};
\draw (258.45,266) node  [align=left] {$\displaystyle n_{6}$};
\draw (361,263) node  [align=left] {$\displaystyle n_{6}$};
\draw (552,262) node  [align=left] {$\displaystyle n_{6}$};
\draw (183,310) node  [align=left] {$\displaystyle G\ =\ ( N,A)$};
\draw (458,310) node  [align=left] {$\displaystyle B(G)\ =\ ( V_{1} \cup V_{2} ,\ E)$};

\end{tikzpicture}

\caption{Example of graph $G$ and its bipartite representation $B(G)$. \label{FigureBipartiteRepresentation}}
\end{figure}

The matrix $[U^\top, V^\top ]^\top$ equals the incidence matrix of $B(G)$. Suppose we orient all edges of $B(G)$ from $V_1$ to $V_2$. The incidence matrix with respect to this orientation equals $[U^\top, - V^\top ]^\top$. Clearly, we have $\Nul( [U^\top, V^\top ]^\top) = \Nul([U^\top, -V^\top]^\top )$. The null space of the incidence matrix of a directed graph is in the literature known as the flow space of a graph. Hence, it follows that the columns of $\overline{W}$ form a basis for the flow space of the bipartite representation of $G$ (with respect to the orientation from $V_1$ to $V_2$).

Let $C$ be a cycle in $B(G)$. Since $B(G)$ is a bipartite graph, $C$ consists of an even number of edges. Let $z \in \mathbb{R}^m$ denote its signed characteristic vector, i.e., we alternately assign values $+1$ and $-1$ to the edges on $C$ and assign value 0 otherwise. It is well-known that the flow space of a graph is spanned by the signed characteristic vectors of all its cycles. For more information about the flow space of a graph, we refer to e.g., \cite{GodsilRoyle}. \\ \\
Hence, $\mathcal{R}$ is related to the cycles of the bipartite representation of $G$. A natural question is how do the cycles of $B(G)$ relate to the original graph $G$? To answer this question, we exploit the notion of a closed antidirected trail, which is introduced in \cite{Bang-JensenEtAl}. Recall that a trail is a walk in a graph that does not contain repeated arcs, but is allowed to contain repeated nodes. A closed trail is a trail that has the same start and ending node.

\begin{definition} A closed antidirected trail (CAT) in a directed graph $G$ is a closed trail of even length with arcs oriented alternately.
\end{definition}

\noindent A cycle in $B(G)$ corresponds to a CAT in $G$. To verify this, let $\phi : E \rightarrow A$ be the bijection between the edges of $B(G)$ and the arcs of $G$ in the natural way. Then, $C$ equals a cycle in $B(G)$ if and only if $\phi(C)$ equals a CAT in $G$. Obviously, since $C$ starts and ends at the same vertex in $B(G)$, $\phi(C)$ also starts and ends at the same node in $G$. Moreover, since $B(G)$ is bipartite, $C$ and thus $\phi(C)$ must be of even length. Finally, each two consecutive edges of $C$ have one common vertex in $V_1$ (resp.\ $V_2$) and the other vertices in $V_2$ (resp.\ $V_1$). By construction of $B(G)$, it follows that two consecutive edges of $C$ correspond to alternately oriented arcs in $G$. Thus $\phi(C)$ is a CAT. The reverse statement can be shown in the same fashion. This leads to the following proposition.
\begin{proposition} The flow space of $B(G)$ equals the subspace spanned by the closed antidirected trails in $G$.
\end{proposition}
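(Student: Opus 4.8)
The plan is to combine two facts already assembled in the excerpt: first, that the columns of $\overline{W}$ form a basis for the flow space of $B(G)$ (with respect to the orientation from $V_1$ to $V_2$), and second, that the flow space of any directed graph is spanned by the signed characteristic vectors of its cycles. The task is therefore to transfer the description ``spanned by cycles of $B(G)$'' across the bijection $\phi : E \to A$ to a description ``spanned by CATs of $G$''. The bijection $\phi$ induces an isomorphism $\Phi : \mathbb{R}^{|E|} \to \mathbb{R}^m$ simply by relabeling coordinates; under $\Phi$, the signed characteristic vector of a cycle $C$ in $B(G)$ maps to a vector supported on the arcs $\phi(C)$, and by the matching of consecutive edges established just before the proposition, the alternating $\pm 1$ pattern along $C$ becomes an alternating $\pm 1$ pattern along the CAT $\phi(C)$. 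Hence $\Phi$ maps the spanning set $\{ z_C : C \text{ cycle in } B(G)\}$ onto $\{ z_{\phi(C)} : \phi(C) \text{ a CAT in } G\}$, and since $\phi$ is a bijection sending cycles of $B(G)$ exactly onto CATs of $G$, the image is precisely the set of signed characteristic vectors of all CATs in $G$.

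Concretely I would argue as follows. Let $z \in \mathbb{R}^{|E|}$ be the signed characteristic vector of a cycle $C$ in $B(G)$. Define $z' \in \mathbb{R}^m$ by $z'_{\phi(e)} = z_e$ for all $e \in E$. By the correspondence already proven in the excerpt, $\phi(C)$ is a CAT in $G$, its edge set is $\{\phi(e) : e \in C\}$, and consecutive edges of $C$ — which receive opposite signs in $z$ — correspond to consecutive (alternately oriented) arcs of $\phi(C)$; therefore $z'$ is exactly the signed characteristic vector of the CAT $\phi(C)$. Conversely, the same relabeling run backwards shows every signed characteristic vector of a CAT in $G$ arises this way from the signed characteristic vector of a unique cycle in $B(G)$. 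Since the coordinate-relabeling map $\Phi$ is a linear isomorphism and the flow space of $B(G)$ is, by the classical result cited from \cite{GodsilRoyle}, the span of $\{z_C\}$, it follows that $\Phi$ of the flow space of $B(G)$ is the span of $\{z'_{\phi(C)}\}$, i.e.\ the subspace spanned by the signed characteristic vectors of the CATs in $G$. Finally, recalling that the flow space of $B(G)$ is $\Nul([U^\top, V^\top]^\top) = \Nul([U^\top, -V^\top]^\top)$ and that $\Phi$ is the identity under the natural identification of $E$ with $A$ via $\phi$, this span is exactly what the proposition calls ``the subspace spanned by the closed antidirected trails in $G$''.

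I do not expect a genuine obstacle here: the substance of the argument — the cycle/CAT correspondence and the alternating-sign bookkeeping — has already been carried out in the paragraph preceding the proposition, and the flow-space-spanned-by-cycles fact is quoted as standard. The only point requiring a modicum of care is to be explicit that $\phi$ respects not merely the vertex/edge incidence structure but also the alternation of orientations, so that ``signed characteristic vector of $C$'' and ``signed characteristic vector of $\phi(C)$'' really are identified under $\Phi$ rather than differing by some more complicated sign twist; but this is exactly the content of the sentence ``each two consecutive edges of $C$ correspond to alternately oriented arcs in $G$,'' so it can be invoked directly. A secondary bookkeeping point is that a CAT in $G$, unlike a cycle, may revisit nodes; however it still uses each arc at most once (it is a trail), so its signed characteristic vector is well defined in $\{-1,0,1\}^m$, and this is precisely mirrored by the fact that a cycle in $B(G)$ uses each edge once — the bijection $\phi$ makes the two statements interchangeable.
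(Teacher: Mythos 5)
Your argument is correct and is essentially the paper's own: the proposition follows by combining the standard fact that the flow space is spanned by the signed characteristic vectors of cycles with the cycle--CAT correspondence under $\phi$ established in the preceding paragraph, and the coordinate-relabeling isomorphism you describe is exactly the identification the paper uses implicitly. Your extra care about the sign alternation being preserved and about CATs being trails (so each arc is used at most once) is sound but does not change the route.
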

\noindent We now have two interpretations of the column space of $\overline{W}$, one with respect to $G$ and the other with respect to $B(G)$. The latter one is more suitable for finding a sparse expression for $\overline{W}$.

Since $\overline{W}$ has $m - \alpha$ columns and the flow space of $B(G)$ has dimension $|E| - |V_1 \cup V_2| + c_{B(G)}$, where $c_{B(G)}$ equals the number of connected components in $B(G)$, it follows that:
\begin{align}
\alpha = |V_1 \cup V_2| - c_{B(G)} = 2n - c_{B(G)}.
\end{align}
Observe that the extreme cases are established by the directed cycle and the complete digraph on $n$ nodes, which yield $\alpha = n$ and $\alpha = 2n -1$, respectively.

There exist several natural bases for the flow space of a graph, see e.g., \cite{GodsilRoyle}. We use the following construction: Let $T$ be a spanning forest of $B(G)$ and let $E(T) \subseteq E$ denote its corresponding edge set. Then, for all $e \in E \setminus E(T)$, we know that $T \cup \{e\}$ contains a cycle. By alternately assigning values $+1$ and $-1$ to the edges of the cycle and assigning value 0 to all remaining edges, we obtain a signed characteristic vector of the cycle. By repeating this construction for all edges in $E \setminus E(T)$, we obtain $m - \alpha$ linearly independent vectors in $\Nul([U^\top, V^\top ]^\top )$, which form a basis for this subspace.
Finding a spanning forest and detecting a cycle in $T \cup \{e\}$ can both be done by a breadth first search.

The pseudo-code for the computation of a sparse $W$ is given in Algorithm \ref{AlgW}. This algorithm applies to all \textsc{QCCP} instances.

\algnewcommand{\AND}{\algorithmicand}
\renewcommand{\algorithmicrequire}{\textbf{Input:}}
\renewcommand{\algorithmicensure}{\textbf{Output:}}

\begin{algorithm}[h!]
\footnotesize
\caption{Computation of Transformation Matrix $W$}\label{AlgW}
\begin{algorithmic}[1]
\Require $G = (N, A)$
\State Construct the bipartite representation $B(G) = (V_1 \cup V_2, E)$ of $G$.
\State Find a spanning forest $T$ of $B(G)$. \label{aspanningtree}
\For {$e \in E \setminus E(T)$}
\State Find the unique cycle $C$ in $T \cup \{e\}$. \label{afindcycle}
\State Alternately assign values $+1$ and $-1$ to edges on $C$.
\State Construct vector $w^e \in \mathbb{R}^m$ by $w^e_f = \begin{cases} \pm 1 & \text{if $f \in C$ (according to step 5),} \\
0 & \text{otherwise.}
\end{cases}$
\EndFor
\State Find a cycle cover $\bar{x} \in P$. \label{avectorx}
\State Let $W \in \mathbb{R}^{(m+1) \times (m + 1 - \alpha)}$ be the matrix whose columns are $\begin{pmatrix}
1 \\ \bar{x}
\end{pmatrix} \cup \left\{ \begin{pmatrix}
0 \\
w^e
\end{pmatrix} :  e \in E \setminus E(T) \right\}$.
\Ensure $W$
\end{algorithmic}
\end{algorithm}

\begin{remark} Although Algorithm \ref{AlgW} uses $B(G)$ to compute $W$, it is possible to perform the same construction using the original graph $G$. This follows from the fact that the CATs of $G$ form the circuits of a matroid $(A,\mathcal{F})$ where
\begin{align*}
\mathcal{F} := \{ F \subseteq A \, : \, \, \text{subgraph $(N(F),F)$ does not contain a CAT} \},
\end{align*}
see \cite{Bang-JensenEtAl}. Step \ref{aspanningtree} of Algorithm \ref{AlgW} then reduces to finding a maximal basis of $(A, \mathcal{F})$ using a greedy algorithm, while step \ref{afindcycle} boils down to finding the unique CAT in $T \cup \{e\}$ using a breadth first search.
\end{remark}

\section{A  cutting plane augmented Lagrangian approach} \label{SectionSolving}

It is known that SDP solvers based on interior point methods  exhibit problems in terms of both time and memory for solving  even medium-size SDPs.
Moreover,  interior point methods have difficulties with handling additional cutting planes such as nonnegativity constraints and triangle inequalities.
Therefore, solving strong SDP models remains  a challenging task.

Recently, a promising alternative for solving large-scale SDP relaxations based on alternating direction augmented Lagrangian methods has been investigated, see \cite{BurerVandenbussche, PovhEtAl, WenEtAl, ZhaoEtAl,SunTohetAl}.
There exist several variants of alternating direction augmented Lagrangian methods for solving SDPs, see e.g., \cite{PovhEtAl, ZhaoEtAl,HeEtAl2014,HeEtAl2016, OliveiraEtAl, HuEtAl, HuSotirov}.
A recent method for solving large-scale SDPs that is related to the augmented Lagrangian paradigm is the conditional gradient augmented Lagrangian method \cite{MaiEtAl1,MaiEtAl2,YurtseverEtAl}.
Here, we first  consider two variants known as the (original) Alternating Direction Method of Multipliers (\textsc{ADMM}) and the Peaceman--Rachford splitting method (\textsc{PRSM}), also called the symmetric \textsc{ADMM}.
Then, we present a novel approach that puts these alternating direction augmented Lagrangian methods into a cutting plane framework.
In particular, we show how to efficiently combine the \textsc{PRSM} with Dykstra's projection algorithm \cite{Dykstra} within  a cutting plane approach.

\subsection{The Alternating Direction Method of Multipliers and the  Peaceman-Rachford Splitting Method} \label{SubsectionADMM}
The \textsc{ADMM} is a first-order method that is introduced in the 1970s to solve large-scale convex optimization problems. Starting from the augmented Lagrangian function, it decomposes the problem into various subproblems that are relatively easy to solve.
In \cite{OliveiraEtAl}, the  authors use the \textsc{ADMM} to solve an SDP relaxation for the quadratic assignment problem and in \cite{HuSotirov} the similar approach is used to compute strong SDP bounds  for
the quadratic shortest path problem.
Their approaches allow for inexpensive iterations and cheap ways for obtaining lower and upper bounds.
In this section we first show how to exploit the approach from  \cite{OliveiraEtAl,HuSotirov}  to solve $(SDP_{S2})$ by the \textsc{ADMM}.  Then, we present the \textsc{PRSM} for our problem.\\

\noindent Let us first rewrite $(SDP_{S2})$ by introducing the constraint $Y = WZW^\top$. The purpose of adding this equality is to split the remaining set of constraints into the SDP constraint on $Z$ and the linear constraints on $Y$. To deal with the latter type, we introduce the following set:
\begin{align} \label{DefCaligraphicY}
\mathcal{Y} := \left\{Y \in \mathcal{S}^{m+1} \, : \, \, \begin{aligned} Y_{00} = 1, \quad \diag(Y) = Y\bold{e}_0,  \quad Y_{ef} \leq 1 \, \, \, \forall e \neq f  \\
  Y \geq \bold{0}, \,\,\, \tr(Y) = n+1, \,\,\, Y_{ef} = 0  \quad  \forall (e,f) \in \mathcal{Z}
\end{aligned} \right\},
\end{align}
where $\mathcal{Z}$ is given in (\ref{defCaligraphicZ}).
Observe that $\mathcal{Y}$ also contains the constraints that are redundant for $(SDP_{S2})$, see Section \ref{SectionSDPrelaxations}.
However, these constraints are not redundant in the subproblems after splitting, see \eqref{Ysubproblem} below. By including them in $\mathcal{Y}$ we therefore fasten the convergence of the \textsc{ADMM} as observed in \cite{HuSotirov,HuEtAl,OliveiraEtAl}. Indeed, these constraints make the alternating projections more accurate.

\begin{remark} Observe that $\mathcal{Y}$ does not contain the redundant constraint $\langle J, Y \rangle = (n+1)^2$.
Namely, our preliminary experiments show that the gain in convergence after adding that constraint
 is not worth the additional computational effort caused by adding it to $\mathcal{Y}$.
\end{remark}

\noindent Now, the starting point of the algorithm is the following relaxation:
\begin{equation}   \label{startADMM}
\min  \left \{   \langle  \hat{Q}, Y \rangle :~  Y = WZW^\top, ~ Y \in \mathcal{Y}, ~ Z \succeq 0 \right \},
\end{equation}
that is equivalent to $(SDP_{S2})$.
We assume that the transformation matrix $W$ is normalized such that $W^\top W = I$. Observe that the sparse $W$ resulting from Algorithm \ref{AlgW} does not have orthogonal columns.
Therefore, we apply a QR-decomposition on the matrix obtained from Algorithm \ref{AlgW}.\\

\noindent Let $S \in \mathcal{S}^{m+1}$ denote the Lagrange multiplier  for the linear constraint $Y = WZW^\top$.
We consider the augmented Lagrangian function of (\ref{startADMM}) w.r.t.~this constraint for a fixed penalty parameter $\beta > 0$:
\begin{align*}
L_\beta (Z,Y,S) := \langle \hat{Q}, Y \rangle + \langle S, Y - WZW^\top \rangle + \frac{\beta}{2}|| Y - WZW^\top ||_F^2.
\end{align*}

\noindent The \textsc{ADMM} aims to minimize $L_\beta(Z,Y,S)$ subject to $Y \in \mathcal{Y}$ and $Z \succeq 0$ while iteratively updating $S$. This problem can be decomposed into subproblems, where we only minimize with respect to one of the matrix variables while keeping the other fixed.

Suppose that $(Z^k, Y^k, S^k)$ denotes the $k$-th iterate of the \textsc{ADMM}. Then the new iterate $(Z^{k+1},$ $Y^{k+1}, S^{k+1})$ can be obtained by the following updates:
\begin{numcases}{(ADMM)\quad}
\,\,\, Z^{k+1} & $:=  \arg \underset{Z \succeq 0}{\min} \,\, L_\beta(Z, Y^k, S^k)$,\label{Zsubproblem} \\
\,\,\, Y^{k+1} & $:=  \arg \underset{Y \in \mathcal{Y}}{\min} \,\, L_\beta (Z^{k+1}, Y, S^k )$, \label{Ysubproblem} \\
\,\,\,  S^{k+1} & $:= S^k + \gamma \cdot \beta \cdot (Y^{k+1} - WZ^{k+1}W^\top)$. \label{Ssubproblem}
\end{numcases}
Here $\gamma \in (0, \frac{1 + \sqrt{5}}{2})$ is the stepsize parameter for updating the Lagrange multiplier $S$, see e.g., \cite{WenEtAl}. The efficiency of the \textsc{ADMM} depends on the difficulty of solving the subproblems (\ref{Zsubproblem}) and (\ref{Ysubproblem}).
\\ \\
The $Z$-subproblem can be solved as follows, see also \cite{OliveiraEtAl, HuSotirov}:
\begin{align*}
Z^{k+1} & = \arg \min_{Z \succeq 0} \left[ \langle \hat{Q}, Y^k \rangle - \frac{1}{2 \beta} \left\Vert S^k\right\Vert_F^2 + \frac{\beta}{2} \left\Vert WZW^\top - \left(Y^k + \frac{1}{\beta}S^k \right)\right\Vert_F^2 \right] \\
&= \arg \min_{Z\succeq 0} \left\Vert WZW^\top - \left(Y^k + \frac{1}{\beta}S^k \right)\right\Vert_F^2 = \mathcal{P}_{\mathcal{S}^{m+1-\alpha}_+}\left( W^\top \left(Y^k + \frac{1}{\beta}S^k \right)W\right),
\end{align*}
where $\mathcal{P}_{\mathcal{S}^{m}_+}( \cdot )$ denotes the orthogonal projection onto the cone of positive semidefinite matrices of order $m$, which can be performed explicitly, see e.g., \cite{Higham}. \\ \\
The $Y$-subproblem can be rewritten as follows:
\begin{align*}
Y^{k+1} & = \arg \min_{Y \in \mathcal{Y}} \left[ \langle \hat{Q}, WZ^{k+1}W^\top \rangle - \frac{\beta}{2} \left \Vert \frac{\hat{Q} + S^k}{\beta}\right \Vert_F^2 + \frac{\beta}{2} \left\Vert Y - WZ^{k+1}W^\top + \frac{\hat{Q} + S^k}{\beta}\right\Vert_F^2 \right] \\
& = \arg \min_{Y \in \mathcal{Y}} \left \Vert  Y -  \left( WZ^{k+1}W^\top - \frac{\hat{Q} + S^k}{\beta} \right) \right\Vert_F^2 = \mathcal{P}_\mathcal{Y} \left(  WZ^{k+1}W^\top - \frac{\hat{Q} + S^k}{\beta} \right),
\end{align*}
where $\mathcal{P}_\mathcal{Y}(\cdot)$ denotes the orthogonal projection onto the polyhedral set $\mathcal{Y}$.

We now show how to project a matrix $M \in \mathcal{S}^{m+1}$ onto $\mathcal{Y}$. For that purpose, we define several operators, see Table~\ref{TableOperators}.

\begin{table}[H]
\centering
\scriptsize
\begin{tabular}{@{}ccll@{}}
\toprule
     \multicolumn{3}{c}{Operator}                                               & Description                                                                                                                                                 \\ \midrule
$T_{\arrow}$     & : & $\mathcal{S}^{m+1} \rightarrow \mathbb{R}^m$                      & \begin{tabular}[c]{@{}l@{}} $T_\arrow \left( \begin{pmatrix}
x_0 & x^\top \\ x & X
\end{pmatrix} \right) = \frac{1}{3}\left( \diag(X) + 2 x\right)$. \end{tabular}                 \\[0.3cm]

$T_\arrow^*$ & : & $\mathbb{R}^m \rightarrow \mathcal{S}^{m+1}$ & $T_\arrow^*(x) = \begin{pmatrix}
0 & \frac{1}{3}x^\top \\ \frac{1}{3}x & \Diag(\frac{1}{3}x)
\end{pmatrix}$. \\[0.3cm]

$T_{\inner}$     & : & $\mathcal{S}^{m+1} \rightarrow \mathcal{S}^{m+1}$                 & \begin{tabular}[c]{@{}l@{}}
$T_{\inner}\left( \begin{pmatrix}
x_0 & x^\top \\ x & X
\end{pmatrix} \right) =  \begin{pmatrix}
0 & \bold{0}_m^\top \\ \bold{0}_m & \tilde{X} - \Diag(\tilde{X})
\end{pmatrix} $ where $\tilde{X} \in \mathcal{S}^m$ is s.t. \\
$\tilde{X}_{ef} = 0$ if $(e,f) \in \mathcal{Z}$ and $\tilde{X}_{ef} = X_{ef}$ otherwise.
\end{tabular}\\ [0.5cm]
$T_{\text{box}}$ & : & $\mathcal{S}^{m+1} \rightarrow \mathcal{S}^{m+1}$ & $T_{\text{box}}(X)_{ef} = \min( \max( X_{ef}, 0), 1)$ for all $(e,f)$.

\\ \bottomrule
\end{tabular}
\caption{Overview of operators and their definitions. \label{TableOperators}}
\end{table}
\noindent Let $\hat{M}$ denote the projection of a matrix $M$ onto $\mathcal{Y}$. The projection can be split into two parts: the projection of the so-called arrow of $M$, i.e., the zeroth row, zeroth column and diagonal of $M$, and the projection of the remaining entries. We specify details below.

We clearly have $\hat{M}_{00} = 1$. The remaining entries of the arrow of $\hat{M}$ are obtained as the solution to the following minimization problem:
\begin{align*}
& \min_{y \in \mathbb{R}^m} \left\{ \left \Vert y - T_\arrow(M) \right\Vert_2^2 : \, \, \bold{1}^\top y = n,~ y \geq \bold{0} \right\} .
\end{align*}
Observe that the problem above boils down to a projection of a vector onto the simplex $\Delta(n)$, where
$\Delta(a) := \{x \in \mathbb{R}^m \, : \, \, \bold{1}^\top x = a, ~x \geq \bold{0}\}$ for all nonnegative $a \in \mathbb{R}$.
The projection onto $\Delta(a)$,  denoted  by $\mathcal{P}_{\Delta(a)}( \cdot )$,  can be performed explicitly in $O(m \log m)$, see \cite{HeldEtAl}. The projection of the remaining entries of $M$ is trivial.

We conclude that the explicit projection of $M$ onto $\mathcal{Y}$ equals:
\begin{align*}
\mathcal{P}_\mathcal{Y}(M) = E_{00} + {T_{\rm box}} \Big( T_\inner(M) \Big) + T_\arrow^*\Big( 3 \cdot \mathcal{P}_{\Delta(n)}\big( T_\arrow(M) \big) \Big),
\end{align*}
where $E_{00} = \bold{e}_0\bold{e}_0^\top \in \mathcal{S}^{m+1}$. The fact that our SDP relaxations satisfy the constant trace property, i.e., $\tr(Y) = n+1$, is exploited in the $Y$-subproblem. The presence of the constant trace property in SDPs has been exploited recently in conditional gradient-based augmented Lagrangian methods. These methods iteratively solve a minimization problem with respect to the set of positive semidefinite matrices having fixed trace, see e.g., \cite{MaiEtAl1,MaiEtAl2,YurtseverEtAl}. In contrast, our method exploits the constant trace property in the polyhedral projections.\\ \\
In the \textsc{ADMM} the Lagrange multiplier is only updated after both primal variables have been updated. We present below the Peaceman--Rachford splitting method (\textsc{PRSM}) or the symmetric \textsc{ADMM} with larger stepsize \cite{ HeEtAl2016}.
This method consists of two dual updates per iteration. Let $(Z^k, Y^k, S^k)$ denote the $k$-th iterate of the \textsc{PRSM}.
Then the following iterative scheme is applied:
\begin{numcases}{(PRSM)\quad}
\,\,\, Z^{k+1} & $:=  \arg \underset{Z \succeq 0}{\min} \,\, L_\beta(Z, Y^k, S^k)$,\label{ZsubproblemPR} \\
\,\,\, S^{k + \frac{1}{2}} & $:= S^k + \gamma_1 \cdot \beta \cdot (Y^k - WZ^{k+1}W^\top )$, \\
\,\,\, Y^{k+1} & $:=  \arg \underset{Y \in \mathcal{Y}}{\min} \,\, L_\beta (Z^{k+1}, Y, S^{k + \frac{1}{2}} )$, \label{YsubproblemPR} \\
\,\,\,  S^{k+1} & $:= S^{k + \frac{1}{2}} + \gamma_2 \cdot \beta \cdot (Y^{k+1} - WZ^{k+1}W^\top)$. \label{SsubproblemPR}
\end{numcases}
Here $\gamma_1$ and $\gamma_2$ are  parameters that must be carefully chosen in order to guarantee convergence.
The \textsc{PRSM} is known for accelerated speed of convergence in comparison with other \textsc{ADMM}-like algorithms, see \cite{HeEtAl2016}.

\subsection{ADMM versus PRSM: Preliminary Results} \label{sect:preliminarCompare}
In Section \ref{SubsectionADMM} we present two methods for solving $(SDP_{S2})$: the \textsc{ADMM} and the \textsc{PRSM}. Both approaches can be incorporated within  the cutting plane augmented Lagrangian method that we present later. We here provide some preliminary experiments to present the behaviour of both methods in terms of convergence.

We consider a test set of 10 Erd\H os-R\'enyi instances with $m$ ranging from 250 to 750, see Section \ref{SectionNumerics} for a specification of these instances. For each instance, we use the \textsc{ADMM} and the \textsc{PRSM} to compute $(SDP_{S2})$ under the same parameter settings as will be explained in Section \ref{SectionNumerics}. We compute lower bounds obtained from the methods, see Section \ref{SubsubsectionLB}, and scale them such that the final bound is indexed to 100. Figure~\ref{FigureADMMvsPRSM:a} shows these scaled bounds for all instances, while Figure~\ref{FigureADMMvsPRSM:b} shows their average over all instances with respect to the number of iterations performed.

\begin{subfigures}
\centering
\begin{figure}[H]
\centering
\includegraphics[scale=0.46]{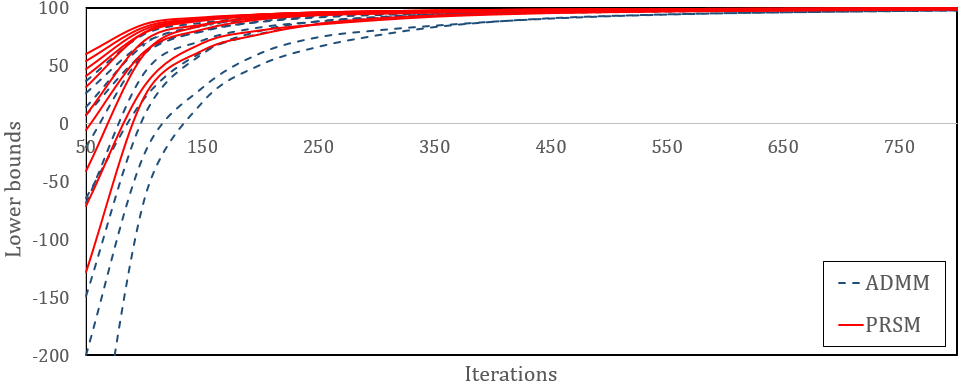}
\caption{Lower bounds for the \textsc{ADMM} (dashed) and the \textsc{PRSM} (solid) for full test set. \label{FigureADMMvsPRSM:a}}
\end{figure}
\begin{figure}[H]
\centering
\includegraphics[scale=0.46]{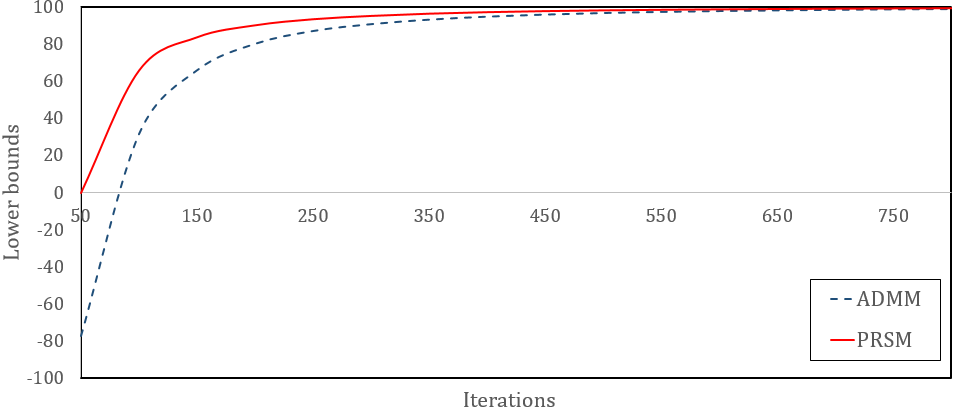}
\caption{Lower bounds for the \textsc{ADMM} (dashed) and the \textsc{PRSM} (solid) on average.  \label{FigureADMMvsPRSM:b}}
\end{figure}\label{FigureADMMvsPRSM}
\end{subfigures}
\noindent Figure~\ref{FigureADMMvsPRSM} shows that although both methods converge, the \textsc{PRSM} in general produces strong lower bounds faster than the \textsc{ADMM}. This is in line with the  accelerated numerical performance of the \textsc{PRSM} in contrast to the \textsc{ADMM} presented in \cite{HeEtAl2016}. Because we desire a fast convergence  when iteratively adding cuts, we incorporate the \textsc{PRSM} in the cutting plane augmented Lagrangian approach introduced in Section  \ref{sect:cutPlane}.

\subsection{Projection onto a single BQP Cut} \label{SubsectionBQP}
The implementation of the \textsc{ADMM} and the \textsc{PRSM} discussed in the previous section can be used to solve $(SDP_{S1})$ and $(SDP_{S2})$. In order to solve $(SDP_{S3})$, the constraints $Y \in \mathcal{C}(\mathcal{T})$ are added to the set of polyhedral constraints, which significantly increases the complexity of the $Y$-subproblem (\ref{Ysubproblem}).
To project onto $\mathcal{Y} \cap \mathcal{C}(\mathcal{T})$, we use an iterative projection framework, see Section \ref{SubsectionDykstra}. In this section, we first show how to project onto the polyhedron induced by a single triangle inequality.\\ \\
We assume that $\mathcal{T}$ contains the arc triples $(e,f,g)$ with 
$e \neq f, f \neq g, e \neq g$ that correspond to  (possibly violated) triangle inequalities, e.g., resulting from a cutting plane framework. For each $(e,f,g) \in \mathcal{T}$ we let $\mathcal{H}_{efg}$ be the following polyhedron:
\begin{align*}
\mathcal{H}_{efg} := \left\{ Y \in \mathcal{S}^{m+1} \, : \, \, Y_{ef} + Y_{eg} \leq Y_{ee} + Y_{fg}, \, \, \diag(Y) = Y\bold{e}_0 \right\}.
\end{align*}
Let $\mathcal{P}_{\mathcal{H}_{efg}}(M)$ denote the projection of a matrix $M \in \mathcal{S}^{m+1}$ onto $\mathcal{H}_{efg}$. This projection can be written explicitly as stated in the following lemma.
\begin{lemma} \label{LemmaProjectionBQP}
Let $\hat{M} := \mathcal{P}_{\mathcal{H}_{efg}}(M)$ be the projection of a matrix $M \in \mathcal{S}^{m+1}$ onto $\mathcal{H}_{efg}$.
If  $M_{ef} + M_{eg} \leq \frac{M_{ee} + 2M_{0e}}{3} + M_{fg}$, then
\begin{align*}
\hat{M}_{st} = \begin{cases} \frac{1}{3}M_{ee} + \frac{2}{3}M_{0e} & \text{if $(s,t) \in \{(0,e), (e,0), (e,e)\}$,} \\
M_{st} & \text{otherwise.}
\end{cases}
\end{align*}
If  $M_{ef} + M_{eg} > \frac{M_{ee} + 2M_{0e}}{3} + M_{fg}$, then the projection $\hat{M}$ can be written explicitly as:
\begin{align*}
\hat{M}_{st} = \begin{cases} \frac{1}{11}M_{ee} + \frac{2}{11}M_{0e} + \frac{3}{11}M_{fg} + \frac{8}{11}M_{ef} - \frac{3}{11}M_{eg} & \text{if $(s,t) \in \left\{ \begin{aligned} (e,f),(f,e) \end{aligned} \right\}$,}\\
\frac{1}{11}M_{ee} + \frac{2}{11} M_{0e} + \frac{3}{11}M_{fg} - \frac{3}{11}M_{ef} + \frac{8}{11}M_{eg} & \text{if $(s,t) \in \left\{ \begin{aligned} (e,g),(g,e) \end{aligned} \right\}$,} \\
-\frac{1}{11}M_{ee} - \frac{2}{11}M_{0e} + \frac{8}{11}M_{fg} + \frac{3}{11}M_{ef} + \frac{3}{11}M_{eg} & \text{if $(s,t) \in \{(f,g), (g,f)\}$,} \\
\frac{3}{11}M_{ee} + \frac{6}{11}M_{0e} - \frac{2}{11}M_{fg} + \frac{2}{11}M_{ef} + \frac{2}{11}M_{eg} & \text{if $(s,t) \in \left\{ \begin{aligned} (0,e), (e,0), (e,e)  \end{aligned} \right\}$,} \\
M_{st} & \text{otherwise.}
\end{cases}
\end{align*}
\end{lemma}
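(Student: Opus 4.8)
The plan is to read $\mathcal P_{\mathcal H_{efg}}(M)=\arg\min\{\|Y-M\|_F^2:Y\in\mathcal H_{efg}\}$ as a \emph{small} constrained least-squares problem. Although $Y$ ranges over all of $\mathcal S^{m+1}$, the two types of constraints defining $\mathcal H_{efg}$ only link the entries $Y_{ee}$, $Y_{0e}=Y_{e0}$, $Y_{ef}=Y_{fe}$, $Y_{eg}=Y_{ge}$ and $Y_{fg}=Y_{gf}$. First I would note that the arrow equations $Y_{ss}=Y_{0s}$ for $s\neq e$ act on pairwise disjoint coordinate blocks, each disjoint from the five entries above; since the matrices fed to this projection already satisfy those equations, they leave every other entry of $M$ untouched, which accounts for the ``otherwise'' rows in both formulas. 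It then remains to minimise, over the five coupled entries, the relevant part of $\|Y-M\|_F^2$, being careful about the Frobenius weighting: an off-diagonal entry contributes twice (its two symmetric positions) while a diagonal entry contributes once.

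Next I would eliminate the arrow equation at $e$ by setting $Y_{ee}=Y_{0e}=Y_{e0}=:t$. The restricted objective becomes
\[
2(Y_{ef}-M_{ef})^2+2(Y_{eg}-M_{eg})^2+2(Y_{fg}-M_{fg})^2+(t-M_{ee})^2+2(t-M_{0e})^2,
\]
and completing the square in $t$ turns the last two terms into $3\bigl(t-\mu\bigr)^2$ up to an additive constant, where $\mu:=\tfrac13 M_{ee}+\tfrac23 M_{0e}$. Thus we are projecting, in the fixed weighted Euclidean norm with weights $(2,2,2,3)$, the point $(M_{ef},M_{eg},M_{fg},\mu)$ onto the halfspace $Y_{ef}+Y_{eg}\le t+Y_{fg}$.

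Then I would split into the two cases. If $(M_{ef},M_{eg},M_{fg},\mu)$ already satisfies $M_{ef}+M_{eg}\le\mu+M_{fg}$ — exactly the first-case hypothesis — the halfspace constraint is inactive, so the unconstrained minimiser is optimal: it keeps $Y_{ef},Y_{eg},Y_{fg}$ at their $M$-values and sets $t=\mu$, which is the first displayed formula. Otherwise the constraint is active and we project onto the bounding hyperplane $Y_{ef}+Y_{eg}-Y_{fg}-t=0$. The stationarity conditions, with the coefficients $4,4,4,6$ obtained by differentiating the weighted objective, read $Y_{ef}=M_{ef}-\tfrac\lambda4$, $Y_{eg}=M_{eg}-\tfrac\lambda4$, $Y_{fg}=M_{fg}+\tfrac\lambda4$, $t=\mu+\tfrac\lambda6$ for a multiplier $\lambda\ge 0$. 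Substituting into the hyperplane equation gives $\lambda=\tfrac{12}{11}\bigl(M_{ef}+M_{eg}-M_{fg}-\mu\bigr)$, and expanding $\mu$ in the back-substitution reproduces precisely the five-way formula of the second case (in particular the arrow of $e$ becomes $\tfrac{3}{11}M_{ee}+\tfrac{6}{11}M_{0e}-\tfrac{2}{11}M_{fg}+\tfrac{2}{11}M_{ef}+\tfrac{2}{11}M_{eg}$).

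I expect the obstacles here to be bookkeeping rather than conceptual: keeping the diagonal-versus-off-diagonal weighting of $\|\cdot\|_F$ straight (this is exactly what produces the $\tfrac13,\tfrac23$ split and the denominator $11$), and making sure the decoupling claim of the first paragraph is actually valid for the matrices to which the lemma is applied. Once the reduction is in place, the rest is a single-equation KKT computation.
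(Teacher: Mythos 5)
Your proposal is correct and follows essentially the same route as the paper's Appendix A proof: reduce to the five coupled entries with Frobenius weights $(2,2,2,1,2)$, split on whether the triangle constraint is active, and solve the one-multiplier KKT system (your $\lambda=\tfrac{12}{11}(M_{ef}+M_{eg}-M_{fg}-\mu)$ matches the paper's expression for $\lambda$ after expanding $\mu$). The only cosmetic difference is that you complete the square in $t$ first, recasting the active case as a weighted halfspace projection, whereas the paper writes the KKT conditions directly on the four-variable problem; the resulting formulas coincide.
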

\begin{proof}
See Appendix \ref{AppendixProofLemma}.
\end{proof}

\subsection{Semi-Parallel Dykstra's projection algorithm} \label{SubsectionDykstra}

A reasonable argument for the fact that a cutting plane technique in an alternating direction augmented Lagrangian approach has never been considered before, is the increasing complexity of the involved projections. In our case, it requires a projection onto the intersection of $\mathcal{Y}$, see (\ref{DefCaligraphicY}), and a finite collection of polyhedra $\mathcal{H}_{efg}$. This can be performed in an iterative approach based on Dykstra's projection algorithm \cite{Dykstra, BoyleDykstra}. Although there exist some similarities between the \textsc{ADMM} and Dykstra's algorithm, see \cite{Tibshirani}, we are the first that combine both methods to compute SDP bounds.

Finding the projection onto the intersection of polyhedra or general convex sets is a well-known problem  for which multiple algorithms have been proposed. For a detailed background on projection methods, we refer the reader to \cite{BauschkeKoch, Cegielski}. Bauschke and Koch \cite{BauschkeKoch} compare several projection algorithms for problems motivated by road design and conclude that Dykstra's cyclic algorithm performs best for projections onto the intersection of convex sets. The idea behind Dykstra's algorithm is to iteratively project a deflected version of the previous iterate onto the individual sets. This method was first proposed by Dykstra \cite{Dykstra} for closed convex cones in finite-dimensional Euclidean spaces and later generalized to closed convex sets in Hilbert spaces by Boyle and Dykstra \cite{BoyleDykstra}.\\ \\
We are interested in the following best approximation problem:
\begin{align} \label{ProjectionProblem}
\min \quad & \Vert \hat{M} - M \Vert_F^2 \quad \text{ s.t. } \quad \hat{M} \in  \mathcal{Y}_\mathcal{T} := \left( \bigcap_{(e,f,g) \in \mathcal{T}} \mathcal{H}_{efg} \right) \cap \mathcal{Y},
\end{align}
where $M$ is the matrix that we  project onto $\mathcal{Y}_{\mathcal{T}}$. Observe that $\mathcal{Y}_\mathcal{T} = \mathcal{C}(\mathcal{T})\cap \mathcal{Y}$.

Dykstra's algorithm starts by initializing the so-called normal matrices $R^0_\mathcal{Y} = \bold{0}$ and $R^0_{efg} = \bold{0}$ for all $(e,f,g) \in \mathcal{T}$. Now, we set $X^0 = M$ and iterate for $k \geq 1$:
\begin{align} \label{AlgCyclicDykstra} \tag{CycDyk}
\begin{aligned}
\begin{aligned}
X^k & := \mathcal{P}_\mathcal{Y} \left( X^{k-1} + R_\mathcal{Y}^{k-1} \right) \\
R_{\mathcal{Y}}^k & := X^{k-1} + R_\mathcal{Y}^{k-1} - X^k \end{aligned}  \,\, \quad \, & \\
\left.
\begin{aligned}
L_{efg} & := X^k + R_{efg}^{k-1} \\
X^k & := \mathcal{P}_{\mathcal{H}_{efg}} \left( L_{efg} \right) \\
R^k_{efg} & := L_{efg} - X^k
\end{aligned} \quad \right\} & \quad \text{for all } (e,f,g) \in \mathcal{T}
\end{aligned}
\end{align}
Several authors have shown that the sequence $\{X^k\}_{k\geq 1}$ strongly converges to the solution of the best approximation problem (\ref{ProjectionProblem}), see \cite{BoyleDykstra, Han, GaffkeMathar}. Since the polyhedra are considered in a cyclic order, the iterates (\ref{AlgCyclicDykstra}) are refered to as Dykstra's cyclic algorithm. Observe that if $\mathcal{T} = \emptyset$, then (\ref{AlgCyclicDykstra}) boils down to a single projection onto $\mathcal{Y}$.\\ \\
Instead of projecting on each polyhedron one after another, it is also possible to project on all polyhedra simultaneously.
This method is referred to as Dykstra's parallel algorithm. We refer the interested reader to Appendix \ref{AppendixParallelDykstra} for an implementation and some details of this parallel  version.
Although the parallel version takes longer to converge in our case, the projections can be done simultaneously, which might be beneficial if used on parallel machines. Preliminary experiments show that in our cutting plane setting the parallel version,  not implemented on parallel machines, is not able to improve on the cyclic version.  \\

\noindent To make  (\ref{AlgCyclicDykstra})  efficient, we (partly) parallelize the algorithm.
Note that  a projection onto $\mathcal{H}_{efg}$ only concerns the entries $(e,f), (e,g), (f,g), (e,e)$ and $(0,e)$. Hence, if two projections onto $\mathcal{H}_{e_1f_1g_1}$ and $\mathcal{H}_{e_2f_2g_2}$ take place one after another and $\{e_1, f_1, g_1\} \cap \{e_2, f_2, g_2\} = \emptyset$, they can in fact be performed simultaneously.
We partition the triples in $\mathcal{T}$ into $r$ clusters $C_i$, $i = 1, ..., r$, such that $C_1 \cup ... \cup C_r = \mathcal{T}$ and $C_i \cap C_j = \emptyset$ for all $i,j$. By doing so, an iterate of (\ref{AlgCyclicDykstra}) is performed in $r + 1$ consecutive steps, instead of $|\mathcal{T}| + 1$ consecutive steps. More details about this clustering step are given in Section \ref{SubsubsectionClustering}. This provides a semi-parallel implementation of (\ref{AlgCyclicDykstra}). \\

\noindent We  take the following actions to further accelerate the algorithm:
\begin{itemize}
\item All matrices in (\ref{AlgCyclicDykstra}) are symmetric, hence we save memory by only working with the upper triangular part of the matrices;
\item The normal matrices $R_{efg}^k$ for all $(e,f,g) \in \mathcal{T}$ are very sparse, i.e., only the entries $(e,f), (e,g),$ $(f,g), (e,e)$ and $(0,e)$ may be nonzero. Therefore, we work with normal vectors corresponding to the nonzero elements in $R_{efg}^k$ instead of using full $(m+1) \times (m+1)$ matrices.
     This  has the additional advantage that the memory needed does not increase with the size of the instance;

\item The projection onto $\mathcal{Y}$ is considerably more costly than the projection onto the triangle inequalities in terms of computation time. Instead of performing all separate projections exactly once and iterate, numerical tests show that the convergence is accelerated if we perform the projection onto $\mathcal{Y}$ only occasionally. That is, after the projection onto $\mathcal{Y}$ we perform the $|\mathcal{T}|$ triangle inequality projections $K$ times in a cyclic order before we again project onto $\mathcal{Y}$. \label{ParameterK}
\end{itemize}

\subsection{A cutting plane augmented Lagrangian method} \label{sect:cutPlane}
In this section we combine the \textsc{PRSM} discussed in Sections \ref{SubsectionADMM} and \ref{sect:preliminarCompare} with the projection method discussed in Sections \ref{SubsectionBQP} and \ref{SubsectionDykstra}. This leads to a cutting plane augmented Lagrangian method (\textsc{CP-ALM}).
To the best of our knowledge, no such algorithm exists for solving SDP problems. \\ \\
In the \textsc{CP-ALM}, we iteratively solve $(SDP_{S3})$ for a set of cuts $\mathcal{T}$ using the \textsc{PRSM}. Each time the \textsc{PRSM} has converged up to some precision, we evaluate the solution for violated cuts and add the $numCuts$ most violated ones to $\mathcal{T}$, where $numCuts$ is a predefined parameter, and repeat.
An advantage of using the \textsc{PRSM} in a cutting plane approach, as opposed to an interior point method, is that after the addition of new cuts we can start the new \textsc{PRSM} loop from the last obtained triple $(Z^k, Y^k, S^k)$. In other words, we exploit the use of warm starts, which speeds up the convergence.

The \textsc{CP-ALM} is provided in Algorithm \ref{AlgCuttingPlaneALM}. In the sequel, we explain several ingredients of the algorithm in more detail.

\begin{algorithm}[H]
\footnotesize
\caption{\textsc{CP-ALM}}\label{AlgCuttingPlaneALM}
\begin{algorithmic}[1]
\Require $\varepsilon_{PRSM}, \varepsilon_{stag}, \varepsilon_{proj}, maxIter, maxTotalIter, maxStagIter$
\State Compute $\widetilde{W}$ by Algorithm \ref{AlgW} and perform a QR-decomposition on $\widetilde{W}$ to obtain $W$.
\State Set $Y^0 = \bold{0}$, $Z^0 = \bold{0}$, $S^0 = \bold{0}$, $k = 0$ and $\mathcal{T} = \emptyset$.
\While {stopping criteria not met} \Comment{See Section \ref{SubsubsectionStopping}}
\While {stopping criteria not met} \Comment{See Section \ref{SubsubsectionStopping}}
\State Update $Z^{k+1} := \mathcal{P}_{\mathcal{S}^{m+1 - \alpha}_+} \left(W^\top \left( Y^k + \frac{1}{\beta} S^k\right) W \right)$.
\State $S^{k + \frac{1}{2}} := S^k + \gamma_1 \cdot \beta \cdot (Y^k - WZ^{k+1}W^\top )$.
\State Update $Y^{k+1} = \mathcal{P}_{\mathcal{Y}_\mathcal{T}}\left( WZ^{k+1}W^\top - \frac{\hat{Q} + S^{k + \frac{1}{2}}}{\beta}\right)$ by solving (\ref{ProjectionProblem}) using semi-parallel (\ref{AlgCyclicDykstra}). \label{LineDykstra}
\State Update $S^{k+1} := S^{k + \frac{1}{2}} + \gamma_2 \cdot \beta \cdot (Y^{k+1} - WZ^{k+1}W^\top )$.
\State $k \gets k+1$.
\EndWhile
\State Identify the violated triangle inequalities and add the $numCuts$ most violated cuts to $\mathcal{T}$.
\State Cluster the cuts in $\mathcal{T}$ into sets $C_1, ..., C_r$. \Comment{See Section \ref{SubsubsectionClustering}}
\EndWhile
\State Compute $LB(S^k)$ using the final dual variable $S^k$.  \Comment{See Section \ref{SubsubsectionLB}}
\Ensure $LB(S^k)$
\end{algorithmic}
\end{algorithm}

\subsubsection{Stopping criteria} \label{SubsubsectionStopping}
The inner while-loop of Algorithm \ref{AlgCuttingPlaneALM} constructs a \textsc{PRSM} sequence for a fixed $\mathcal{T}$. Experiments show that the algorithm is stabilized if, as opposed to adding many cuts at once, we add cuts smoothly in order to keep the residuals small. Hence, we want the inner \textsc{PRSM} sequence to converge before adding new cuts to $\mathcal{T}$. We consider three types of stopping criteria for the inner while-loop:
\begin{enumerate}
\item Let $\varepsilon_{PRSM} > 0$ be a predefined tolerance parameter. The inner while-loop is terminated after iteration $k$ if
\begin{align*}
\min \Big( \Vert Y^{k+1} - WZ^{k+1}W^\top \Vert_F \, , \,\, \beta \Vert W^\top \left( Y^{k+1} - Y^k \right) W \Vert_F \Big) < \varepsilon_{PRSM}
\end{align*}
The first term on the left hand side measures primal feasibility, while the second term measures dual feasibility.
\item We stop when a fixed number of iterations $maxIter$ is reached.
\item We add a stagnation criterion. Let $\varepsilon_{stag} > 0$ be a tolerance parameter. We introduce a variable $stagIter$ that is increased by one each time we have $| \langle Y^{k+1},\hat{Q} \rangle - \langle Y^k, \hat{Q} \rangle | < \varepsilon_{stag}$. We stop the inner while-loop whenever $stagIter > maxStagIter$ for some predefined integer $maxStagIter$.
\end{enumerate}
The cyclic Dykstra algorithm in line \ref{LineDykstra} of Algorithm \ref{AlgCuttingPlaneALM} is stopped whenever $\Vert X^{k+1} - X^k \Vert_F < \varepsilon_{proj}$ for some predefined $\varepsilon_{proj} > 0$.

Finally, the outer while-loop, i.e., the cutting plane part, is stopped whenever no more violated cuts can be found or after a predefined number of iterations $maxTotalIter > maxIter$ has been reached.

\subsubsection{Clustering} \label{SubsubsectionClustering}
As explained in Section \ref{SubsectionDykstra}, the cyclic Dykstra algorithm can be partially parallelized by partitioning the set $\mathcal{T}$ into $r$ clusters of non-overlapping cuts. We explain here how this clustering is done.

Let $H = (V,E)$ denote a graph where each node $i \in V$ represents a cut in $\mathcal{T}$ and two nodes are connected by an edge whenever the corresponding cuts are overlapping. Clustering $\mathcal{T}$ into the smallest number of non-overlapping sets is then equivalent to finding a minimum coloring in $H$.
This problem is known to be $\mathcal{NP}$-hard. Galinier and Hertz \cite{GalinierHertz} provide an overview of graph coloring heuristics, where it is concluded that the Tabucol algorithm of Hertz and De Werra \cite{HertzDeWerra} is overall very successful. We implement here the improved Tabucol algorithm provided in \cite{GalinierHao}.

\subsubsection{Lower bound} \label{SubsubsectionLB}
After each \textsc{CP-ALM} iterate $k$, we obtain a triple $(Z^{k}, Y^{k},S^{k})$ which allows us to compute $\langle \hat{Q}, Y^k \rangle$. Although this value converges to the optimal solution of the SDP relaxation  $(SDP_{S3})$, the convergence is typically not monotonic, which implies that this value does not necessarily provide a lower bound for the \textsc{QCCP} instance. We can still use the output of the \textsc{CP-ALM} to obtain a lower bound. Various methods for obtaining lower bounds from approximate solutions have been proposed in the literature \cite{Eckstein, JanssonEtAl, OliveiraEtAl}. We adopt here the method introduced by Oliveira et al.\ \cite{OliveiraEtAl}.

Let $\mathcal{W^\top SW} := \{S \, : \, \, W^\top S W \preceq 0 \}$. Then, a lower bound is obtained by solving:
\begin{align} \label{trueLB}
LB(S^k) := \min_{Y \in \mathcal{Y}_{\mathcal{T}}} \langle \hat{Q} + \mathcal{P}_{\mathcal{W^\top S W}}(S^k), Y \rangle,
\end{align}
where $\mathcal{P}_{\mathcal{W^\top S W}}(S^k)$ is the projection of $S^k$ onto the set $\mathcal{W^\top SW}$.
This projection can be performed efficiently, see \cite{OliveiraEtAl}. Moreover, note that (\ref{trueLB}) is a linear programming problem.

\section{Upper bounds} \label{SectionUpper}

The matrices resulting from the \textsc{CP-ALM} can be used to construct upper bounds for the \textsc{QCCP}.
 In this section we derive several upper bounding approaches, among which a deterministic method, two randomized algorithms and a Q-learning algorithm that is based on reinforcement learning.
 We are not aware of other SDP-based rounding algorithms that make use of reinforcement learning.
 We end the section by providing a hybrid approach that combines all aforementioned heuristics.

\subsection{Best Euclidean approximation} \label{SubsectionTB}
Let $(Z^{out}, Y^{out}, S^{out})$ be the outcome of the \textsc{CP-ALM}. Throughout the entire section we assume that the \textsc{CP-ALM} is solved up to high precision in order for the utilized results to be valid. Let $x^{out}$ be the vector consisting of the diagonal elements of $Y^{out}$ excluding the first entry.
As $x^{out}$ is  an approximation of the optimal cycle cover, one can search for the vector $x \in P$ that is closest to $x^{out}$ in Euclidean norm.
  This vector can be obtained as follows:
\begin{align}\label{transport}
x^* := \arg\max \left \{ x^\top x^{out} \, : \, \, x \in \Conv(P) \right\}.
\end{align}
The corresponding upper bound is $UB_{EB} := (x^*)^\top Q x^*$.

\subsection{Randomized Undersampling}

Randomized SDP-based heuristics have proven to be successful for various optimization problems, mainly sparked by the seminal work of Goemans and Williamson \cite{GoemansWilliamson}.
A widely used procedure in the design of approximation algorithms is randomized rounding \cite{Raghavan},
which rounds a relaxed solution to a solution for the original problem that is close to optimal in expectation.
We present an SDP-based randomized rounding algorithm that we refer to as randomized undersampling.\\\\
Let $x^{out} \in \mathbb{R}^m$ be as discussed in Section \ref{SubsectionTB}. Observe that since all entries of $x^{out}$ are non-negative and $\sum_{e \in \delta^+(i)}x_e^{out} = 1$, see (\ref{setP}), we can view $\{x_e\}_{e \in \delta^+(i)}$ as a probability distribution on all arcs leaving node $i$. Similarly, $\{x_e\}_{e \in \delta^-(i)}$ represents a probability distribution on the set of arcs entering node $i$. Hence, for each node $i$ we can draw exactly one arc from $\delta^+(i)$ according to the distribution $\{x_e\}_{e \in \delta^+(i)}$. Let $y_1 \in \{0,1\}^m$ denote the characteristic vector of the outcome of these $n$ trials. We do the same for the incoming arcs, yielding a vector $y_2 \in \{0,1\}^m$. By construction we have $Uy_1 = Vy_2 = \bold{1}_n$, but not necessarily $Vy_1 = Uy_2 = \bold{1}_n$.

The vector $y = y_1 \circ y_2$ denotes a partial cycle cover that satisfies $Uy \leq \bold{1}_n$ and $Vy \leq \bold{1}_n$. Observe that the probability of including arc $e$ in $y$ equals $x_e^2$. To extend $y$ to a feasible cycle cover, we define:
\begin{align}
\begin{aligned}
N^+  := \{i \in N \, : \, \, y_e = 0 \quad  \forall e \in \delta^+(i) \} \quad \text{and} \quad
N^-  := \{i \in N \, : \, \, y_e = 0 \quad  \forall e \in \delta^-(i) \}.
\end{aligned} \label{NplusNminus}
\end{align}
We still have to select exactly one arc from $\delta^+(i)$ for all $i \in N^+$ and one arc from $\delta^-(i)$ for all $i \in N^-$ to extend $y$ to a feasible cycle cover. We can do this by solving a modified version of (\ref{transport}). Let $U_{N^+} \in \mathbb{R}^{|N^+| \times m}$ (resp.\ $V_{N^-} \in \mathbb{R}^{|N^-| \times m}$) denote the submatrix of $U$ (resp.\ $V$) induced by the rows corresponding to $N^+$ (resp.\ $N^-$).
Let us define the following vector:
\begin{align}
\bar{x}^{out}_e := \begin{cases} -\infty & \text{if $e^- \in N \setminus N^-$ or $e^+ \in N \setminus N^+$,} \\
x^{out}_e & \text{otherwise,}
\end{cases} \label{barx}
\end{align}
where some values are set to $-\infty$ in order to avoid in- or outflows larger than one. We now solve
\begin{align} \label{PartialToFull}
z^* := \argmax_{z \in \mathbb{R}^m} \left\{ z^\top \bar{x}^{out} \, : \, \, U_{N^+}z = \bold{1}_{|N^+|}, V_{N^-}z = \bold{1}_{|N^-|}, \bold{0}_m \leq z \leq \bold{1}_m \right\}.
\end{align}
A partial solution $y$ can be extended to a feasible cycle cover if and only if the optimal value to (\ref{PartialToFull}) is finite. Indeed, in that case we have $y + z^* \in P$, which yields the bound $UB_{US} = (y+z^*)^\top Q(y + z^*)$. We now repeat this procedure and store the smallest obtained bound.

As we select at most $n$ arcs at random and extend the solution to a full cycle cover, we call this method randomized undersampling. The steps of this method are summarized in Algorithm \ref{AlgUR}.

\begin{algorithm}[H]
\footnotesize
\caption{Randomized Undersampling for the \textsc{QCCP}}\label{AlgUR}
\begin{algorithmic}[1]
\Require $G, Q, x^{out}$
\State Initialize $y_1 = \bold{0}_m$ and $y_2 = \bold{0}_m$.
\For {$i \in N$}
\State Draw $f_1$ from $\delta^+(i)$ with respect to $\{x^{out}_e\}_{e \in \delta^+(i)}$ and $f_2$ from $\delta^-(i)$ with respect to $\{x^{out}_e\}_{e \in \delta^-(i)}$.
\State Set $y_1(f_1) = 1$ and $y_2(f_2) = 1$.
\EndFor
\State $y \gets y_1 \circ y_2$.
\State Obtain the sets $N^+$ and $N^-$ and the vector $\bar{x}^{out} \in \mathbb{R}^m$ as in (\ref{NplusNminus}) and (\ref{barx}), respectively.
\If {problem (\ref{PartialToFull}) has a finite objective value}
\State $UB_{US} \gets (y + z^*)^\top Q(y + z^*)$ where $z^*$ is computed by (\ref{PartialToFull}).
\Else \State Go back to \textsc{Step 2} \EndIf
\Ensure $UB_{US}$
\end{algorithmic}
\end{algorithm}

\subsection{Randomized Oversampling}

Instead of sampling a partial solution and deterministically extend it to a full cycle cover, we can also randomly add arcs to a subgraph $H$ of $G$ until it contains a cycle cover visiting all nodes. We call this method randomized oversampling. \\ \\
We initialize $H = (N, \emptyset)$ and iteratively add pairs of successive arcs to $H$. This is done randomly using a probability distribution on the set $\delta^-(i) \times \delta^+(i)$ for all $i \in N$. We use a rank-one approximation of $Y^{out}$ for the sake of finite convergence, see Lemma \ref{LemmaOversampling} below.

The best rank-one approximation of $Y^{out}$ is given by $\lambda_{\max} ww^\top$, where $\lambda_{\max}$ and $w \in \mathbb{R}^{m+1}$ are the corresponding Perron-Frobenius eigenvalue and eigenvector, respectively. Let $w_0$ denote the zeroth entry of $w$ and let $\bar{w} \in \mathbb{R}^m$ be the vector obtained by excluding $w_0$ from $w$. It follows from the Perron-Frobenius theorem that $w$ can be chosen such that it has nonnegative entries.  Since the vectors $(-1, u_i^\top)^\top$ and $(-1, v_i^\top)^\top$ are eigenvectors of $Y^{out}$ associated with the eigenvalue zero, see Lemma \ref{LemmaNotSlater}, it follows that
\begin{align*}
u_i^\top \bar{w} = w_0 \quad \text{and} \quad v_i^\top \bar{w} = w_0 \quad \text{for all $i \in N$.}
\end{align*}
Suppose that $w_0 = 0$. Then $u_i^\top \bar{w} = v_i^\top \bar{w} = 0$ for all $i \in N$, which implies that $w$ only contains zeros. Since this contradicts with the fact that $||w|| > 0 $, we have $w_0 > 0$.

Now, let $r \in \mathbb{R}^m$ be defined as $r := \frac{1}{w_0}\bar{w}$.
Since $u_i^\top r = v_i^\top r = 1$ for all $i \in N$ and $r \geq \bold{0}$, we conclude that $r$ is contained in the directed 2-factor polytope. Hence, we can view $\{ r_e \cdot r_f \}_{(e,f) \in \delta^-(i) \times \delta^+(i)}$ as a probability distribution on the pairs of successive arcs for all $i \in N$.

The oversampling algorithm, see Algorithm \ref{AlgOR}, iteratively draws a pair of successive arcs $(e,f)$ around  $i \in N$ according to the distribution implied by $r$ and adds this pair to $H$. We repeat this until $H$ contains a cycle cover. The best among possibly multiple cycle covers in $H$ is obtained by solving  problem  \eqref{transport} with respect to $x^{out}$ restricted to the arcs in $H$.

\begin{algorithm}[H]
\footnotesize
\caption{Randomized Oversampling for the \textsc{QCCP}}\label{AlgOR}
\begin{algorithmic}[1]
\Require $G, Q, Y^{out}, x^{out}$
\State Obtain Perron-Frobenius eigenpair $(w, \lambda_{\max})$ of $Y^{out}$ and let  $r = \frac{1}{w_0}\bar{w}$.
\State Let $H = (N, \emptyset)$ be the empty subgraph of $G$.
\While {$H$ contains no directed 2-factor}
\For {$i \in N$}
\State Select a pair $(e,f)$ according to probability distribution $\{r_e \cdot r_f \}_{(e,f) \in \delta^-(i) \times \delta^+(i)}$. Set $H \gets H \cup \{e, f\}$
\EndFor
\EndWhile
\State Solve \eqref{transport} with respect to $x^{out}$ restricted to $H$, and compute the corresponding upper  bound $UB_{OS}$.
\Ensure $UB_{OS}$
\end{algorithmic}
\end{algorithm}
\noindent We can prove the following result with respect to the termination of Algorithm \ref{AlgOR}.

\begin{lemma} Algorithm \ref{AlgOR} terminates in a finite number of steps with high probability. \label{LemmaOversampling}
\end{lemma}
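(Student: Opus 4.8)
The plan is to show that a single pass of the \textbf{while}-loop in Algorithm \ref{AlgOR} succeeds, with a fixed positive probability, in placing a complete cycle cover inside $H$, so that the number of passes is stochastically dominated by a geometric random variable and hence finite with probability one (and within an explicit number of passes with high probability).

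First I would exploit that, as established just before the statement, $r = \frac{1}{w_0}\bar{w}$ satisfies $r \geq 0$ and $u_i^\top r = v_i^\top r = 1$ for all $i \in N$, so by the explicit description \eqref{ExplicitConvP} we have $r \in \Conv(P)$. Since $\Conv(P)$ is the convex hull of the finitely many characteristic vectors of cycle covers, $r$ admits a decomposition $r = \sum_j \lambda_j \bar{x}_j$ with $\bar{x}_j \in P$, $\lambda_j \geq 0$, $\sum_j \lambda_j = 1$; because $r \neq 0$ (indeed $u_i^\top r = 1$), at least one coefficient is positive, say $\lambda_1 > 0$. Let $C^\ast$ be the cycle cover with characteristic vector $\bar{x}_1$, and for each $i \in N$ let $e_i \in \delta^-(i)$ and $f_i \in \delta^+(i)$ be the unique arcs of $C^\ast$ entering and leaving $i$. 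Then $r_{e_i} \geq \lambda_1 (\bar{x}_1)_{e_i} = \lambda_1 > 0$ and likewise $r_{f_i} \geq \lambda_1 > 0$; in particular $\supp(\bar{x}_1) \subseteq \supp(r)$, so $C^\ast$ is a legal target for the sampling.

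Next I would analyse one execution of the \textbf{for}-loop. For each $i \in N$ the pair around $i$ is drawn from $\delta^-(i) \times \delta^+(i)$ according to $\{r_e r_f\}$, and these draws are mutually independent across nodes. Hence the probability that for \emph{every} $i \in N$ the pair $(e_i,f_i)$ is selected equals $\prod_{i \in N} r_{e_i} r_{f_i} \geq \lambda_1^{2n} =: p^\ast > 0$. When this event occurs, all arcs of $C^\ast$ lie in $H$, so $H$ contains a directed 2-factor and the \textbf{while}-loop terminates. Since $H$ only grows, it suffices that this event occurs in \emph{some} pass; the passes use fresh independent randomness, so if $T$ denotes the number of passes, then $\mathbb{P}(T > k) \leq (1-p^\ast)^k$. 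Therefore $T < \infty$ almost surely, and for any $\delta \in (0,1)$ we get $\mathbb{P}(T \leq k) \geq 1 - \delta$ once $k \geq \log\delta / \log(1-p^\ast)$; as each pass is $n$ draws followed by a single 2-factor-existence check (a polynomial assignment-feasibility test, see Section \ref{SectionQCCP}), the algorithm terminates in a finite number of steps with high probability.

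I do not expect a deep obstacle. The one point requiring care is guaranteeing that $\supp(r)$ contains a \emph{complete} cycle cover, and this is exactly what the membership $r \in \Conv(P)$ and its vertex decomposition deliver; everything else is the routine geometric-tail estimate. The qualifier ``with high probability'' (rather than a deterministic bound) is intrinsic: $p^\ast$ can be arbitrarily small, so no fixed iteration count guarantees termination, even though termination occurs with probability one.
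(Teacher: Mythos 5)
Your proof is correct, and it rests on the same key observation as the paper's: since $r\in\Conv(P)$, its support contains a full cycle cover $C^\ast$ whose arcs all carry probability mass bounded away from zero, and the independent draws will eventually place all of $C^\ast$ in $H$. Where you diverge is in the tail estimate. You require the \emph{single} event that all $n$ pairs of $C^\ast$ are drawn simultaneously in one pass of the \textbf{for}-loop, which succeeds with probability $p^\ast=\prod_{i}r_{e_i}r_{f_i}\geq\lambda_1^{2n}$, and you then dominate the number of passes by a geometric variable with parameter $p^\ast$. The paper instead exploits the fact that $H$ accumulates arcs across iterations: it tracks, for each node $i$ separately, the probability $1-(1-r_{p(i)}r_{s(i)})^k$ that the pair of $C^\ast$ around $i$ is drawn at \emph{some} point in the first $k$ iterations, and multiplies these over $i\in N$ (using independence across nodes) to get $\Pr(\supp(\bar x)\subseteq H)\geq(1-(1-\xi)^k)^n$ with $\xi$ a uniform lower bound on $r_{p(i)}r_{s(i)}$. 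Both arguments prove the lemma; yours is slightly more elementary and makes the existence of the target cover more explicit via the convex decomposition $r=\sum_j\lambda_j\bar x_j$ (which the paper merely asserts), but your iteration count scales like $1/p^\ast$, exponentially worse in $n$ than the paper's coupon-collector-style bound, since you discard the progress made by partial successes in earlier passes. For the qualitative statement being proved this costs nothing.
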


\begin{proof} See Appendix \ref{AppendixProofOversampling}.
\end{proof}

\subsection{Sequential Q-learning}

The final rounding approach we propose is based on a distributed reinforcement learning (RL) technique, namely Q-learning \cite{Watkins}. Q-learning is a branch of machine learning in which artificial agents learn how to take actions in order to maximize an expected total reward. Recently, RL techniques have shown successful in deriving good feasible solutions for combinatorial optimization problems, see e.g., \cite{BarrettEtAl}.
We propose here an algorithm in which a set of agents learn how to find (near-)optimal cycles in $G$ by exploiting  our SDP relaxation.
Our sequential Q-learning algorithm (SQ-algorithm) is inspired by the work of Gambardella and Dorigo \cite{GambardellaDorigo} and exploits the solution of the \textsc{CP-ALM} within the learning process. \\

\noindent In the sequential Q-learning algorithm we introduce $n$ agents each having the independent task to construct a set of node-disjoint cycles. This is done iteratively by adding nodes to the agent's current path until the path contains a directed cycle or no more nodes can be added. For each agent $k = 1, ..., n$, let $P_k$ denote its current path and let $c_k$ and $p_k$ denote the current node and its predecessor on the agent's search, respectively. Besides, let $J_k$ be the set of nodes that is not placed on a cycle by agent $k$. In each iteration, the successor $s_k$ of $c_k$ is selected among one of the nodes in $N^+(c_k) \cap J_k$, where $N^+(c_k)$ is the set of nodes reachable from $c_k$ via a single arc, based on a matrix $SQ \in \mathbb{R}^{m \times m}$. This matrix indicates on position $(e,f)$ how useful it is to traverse an arc $f$ after an arc $e$. We select the successor $s_k$ that leads to a high $SQ((p_k,c_k),(c_k,s_k))$-value and add it to $P_k$. If the addition of $s_k$ to $P_k$ does not result in a cycle, we set the current node $c_k$ to be $s_k$. If the addition of $s_k$ does lead to a cycle $C_k \subseteq P_k$, we memorize this cycle into the agent's partial solution vector $y_k \in \{0,1\}^m$ and set $c_k$ to one of the nodes not yet on a cycle. An agent's search terminates whenever no new successor can be found, i.e., $N^+(c_k) \cap J_k = \emptyset$, or when $y_k$ is a full cycle cover. If one of these events occurs, we deacivitate the agent. We repeat the steps above for all active agents, until all agents have been deactivated. This results in $n$ vectors $y_k$ that represent sets of node-disjoint cycles, not necessarily full cycle covers. At the end of the cycle-building phase, the (partial) solution $y_k$ that has relative minimum cost is used to update the $SQ$-matrix via delayed reinforcement learning. Now all agents are again activited and a new cycle-building trial starts using the new $SQ$-matrix until certain stopping criteria are satisfied, e.g., after a fixed number of trials. \\ \\
To decide which successor $s_k$ to select for a given $p_k,c_k$ and $J_k$, we define a fit function $f$ that depends on the $SQ$-values and the quadratic costs $Q = (q_{ef})$. The fit of visiting $u \in N^+(c_k) \cap J_k$ after $c_k$ is:
\begin{align*}
f(u \, | \, p_k, c_k, J_k) := \begin{cases} \left[  \underset{e \in \delta^+(J_k, c_k)}{\sum}SQ\left(e,(c_k,u)\right) \right]^\delta \cdot \left[  \underset{e \in \delta^+(J_k,c_k)}{\sum} \frac{1}{q_{e,(c_k,u)} + \epsilon} \right]^\beta & \text{if } p_k = \emptyset, \vspace{0.2cm} \\
\, \left[ SQ\left((p_k,c_k), (c_k,u)\right) \right]^\delta \cdot \left[ \frac{1}{q_{(p_k,c_k),(c_k,u)} + \epsilon}  \right]^\beta & \text{otherwise,}
\end{cases}
\end{align*}
where $\delta, \beta > 0$ \label{ParameterDeltaBeta} are parameters which represent the relative importance between the learned $SQ$-values and the quadratic costs and $\epsilon > 0$ is a small value to deal with quadratic costs that are zero. After computing the fit for all potential successors, we deterministically select the one with the highest fit value or select randomly proportional to their fit values. That is,
\begin{align} \label{successorRL}
s_k = \begin{cases}  \underset{u \in N^+(c_k) \cap J_k}{\arg \max} f(u \, | \, p_k, c_k, J_k) & \text{if } q \leq q_0 \vspace{0.2cm}  \\
\quad S & \text{otherwise, }\end{cases}
\end{align}
where $S$ is a random variable over the set $N^+(c_k) \cap J_k$, where each node is chosen with probability proportional to its fit value. The parameter $q_0 \in [0,1]$  from \eqref{successorRL} is the probability of selecting the successor node deterministically. \\

\noindent The $SQ$-values measure the usefulness of traversing two successive arcs.
Recall that $Y^{out}$ is the output of the \textsc{CP-ALM}.
 As $Y^{out}_{ef}$ is likely to be larger when two arcs $e$ and $f$ are in an optimal solution, we initialize the $SQ$-matrix by setting $SQ(e,f) = Y^{out}_{ef}$ for all pairs of successive arcs $(e,f)$. The $SQ$-update is based on a mixture between local memory and a reinforcement learning, similar to \cite{GambardellaDorigo}:
\begin{align} \label{SQ-update}
SQ(e,f) \gets (1 - \alpha) SQ(e,f) + \alpha \left( \Delta SQ(e,f) + \gamma \max_{g \in \delta^+(f^-, J_k)} SQ(f, g) \right),
\end{align}
where $\alpha, \gamma \in (0,1)$ represent the learning rate and discount factor, respectively. The learning update consists of a discounted reward of the next state and a reinforcement term $\Delta SQ(e,f)$. Similar to the algorithm of \cite{GambardellaDorigo}, we assume that this reinforcement term is zero throughout the cycle-building phase and update it only at the end of a trial. Hence, we only incur a delayed reinforcement term $\Delta SQ(e,f)$. The discounted reward, however, is incorporated during the cycle-building phase.

The delayed reinforcement of a pair of successive arcs $(e,f)$ can be seen as a reward for cost minimal cycles that is obtained at the end of each trial. After all agents are deactivated, each vector $y_k$ is the characteristic vector of a set of node-disjoint cycles. For each agent $k$ that constructed at least one cycle, we compute $L_k := (y_k^\top Q y_k) / (\bold{1}^\top y_k)$, i.e., the relative cost per arc in $y_k$. Let $k_{best}$ denote the agent that constructed the solution with the smallest value $L_k$, and let $L_{best}$ denote its relative cost per arc. Then $\Delta SQ(e,f)$ is computed as:
\begin{align} \label{delayedRL}
\Delta SQ(e,f) = \begin{cases} \frac{\Omega}{L_{best}} & \text{if $(e,f)$ is a pair of successive arcs in $y_{k_{best}}$,} \\
0 & \text{otherwise,}
\end{cases}
\end{align}
where $\Omega$ is a constant.\\ \\
We let the SQ-algorithm run until some fixed number of trials has passed. All cycles that have been constructed throughout the entire algorithm are stored in memory. Let $\Gamma$ denote the number of distinct cycles that are constructed and define the matrix $B \in \mathbb{R}^{n \times \Gamma}$ as follows:
\begin{align*}
B_{i,k} = \begin{cases} 1 & \text{if node $i$ is on cycle $k$,} \\
0 & \text{otherwise.}
\end{cases}
\end{align*}
Let $b \in \mathbb{R}^\Gamma$ be the vector containing the quadratic cost of each cycle. Then the best upper bound based on our SQ-algorithm is obtained by solving the following set partitioning problem (\textsc{SPP}):
\begin{align} \label{SPPproblem}
\min \left\{b^\top x \, : \, \, Bx = \bold{1}_n \, , \, \, x \in \{0,1\}^\Gamma \right\}.
\end{align}
As the \textsc{SPP} is $\mathcal{NP}$-hard, computing an optimal solution to (\ref{SPPproblem}) might be too much to ask for. Instead, an approximate solution to (\ref{SPPproblem}) can be obtained efficiently, e.g., by using the Lagrangian heuristic of Atamt\"urk et al.\ \cite{AtamturkEtAl} which is able to compute near-optimal or even optimal solutions to (\ref{SPPproblem}) most of the time. For moderate values of $\Gamma$ and $n$, however, current ILP solvers are able to solve (\ref{SPPproblem}) to optimality in a very short time.

A pseudocode of the SQ-algorithm is provided in Algorithm \ref{AlgSQR}.

\algnewcommand{\IIf}[1]{\State\algorithmicif\ #1\ \algorithmicthen}
\algnewcommand{\EndIIf}{\unskip\ \algorithmicend\ \algorithmicif}

\begin{algorithm}[h]
\footnotesize
\caption{Sequential Q-learning for \textsc{QCCP}}\label{AlgSQR}
\begin{algorithmic}[1]
\Require $G, Q, Y^{out}$
\State For all pairs of successive arcs $(e,f)$, initialize $SQ(e,f) = Y_{ef}^{out}$.
\State For all agents $k = 1, ..., n$, initialize the starting node $c_k = k \in N$, starting edge $e_k = \emptyset$ and $J_k = N$. Set $P_k = \emptyset$ and $y_k  = \bold{0}_m$ and activate all agents.
\While {there is at least one active agent}
\vspace{0.2cm}
\For {all active agents $k$}
\State Update the fit function $f(u \, | \, p_k, c_k, J_k)$ for all $u \in N^+(c_k) \cap J_k$ and obtain $s_k$ according to (\ref{successorRL}).
\State Add $s_k$ to $P_k$.
\EndFor
\vspace{0.2cm}
\For {all active agents $k$ with $p_k \neq \emptyset$}
\State $SQ((p_k, c_k),(c_k,s_k)) \gets (1 - \alpha) SQ((p_k, c_k), (c_k,s_k)) + \alpha \gamma \max_{e \in \delta^+(s_k, J_k)} SQ((c_k,s_k),e)$.
\EndFor
\vspace{0.2cm}
\For {all active agents $k$}
\If {$P_k$ contains a cycle $C_k$}
\State Set $J_k \gets J_k \setminus C_k$ and $P_k \gets \emptyset$
\State Set $(y_k)_e = 1$ for all arcs $e$ in $C_k$.
\If {$J_k = \emptyset$}
\State Deactivate agent $k$.
\Else
\State Set $p_k \gets \emptyset$ and choose $c_k$ uniformly at random out of $J_k$.
\EndIf
\Else
\State Set $p_k \gets c_k$ and $c_k \gets s_k$.
\IIf {$N^+(c_k) \cap J_k = \emptyset$}  Deactivate agent $k$  \EndIIf
\EndIf
\EndFor
\vspace{0.2cm}
\EndWhile
\For {all pairs of successive arcs $(e,f)$}

\State Compute the delayed reinforcement $\Delta SQ(e,f)$ according to (\ref{delayedRL}).
\State $SQ(e,f) \gets (1 - \alpha) SQ(e,f) + \alpha \Delta SQ(e,f)$
\EndFor
\State If stopping criteria are met, obtain $UB_{SQ}$ using (\ref{SPPproblem}). Otherwise,  go to \textsc{Step 2}.
\Ensure $UB_{SQ}$
\end{algorithmic}
\end{algorithm}

\subsection{Hybrid upper bounding algorithm}
The design of the SQ-algorithm discussed in the previous section gives rise to a straightforward hybrid implementation of all above-mentioned upper bounding approaches. Indeed, by adding all cycles that have been created by the best Euclidean approximation, the undersampling and the oversampling algorithm to the matrix $B$ and solve or approximate the corresponding \textsc{SPP}, a hybrid upper bound $UB_{HY}$ is obtained which provably outperforms any independent implementation of the mentioned upper bounds.

\section{Computational Results} \label{SectionNumerics}
We now test the introduced SDP-based lower and upper bounds on several sets of instances and compare them to various bounds from the literature.

This section is organized as follows: we start by introducing the test sets and the parameter settings that we consider. After that, the performance of the lower and upper bounds are discussed in Section \ref{SubsectionResultsLower} and \ref{SubsectionResultsUpper}, respectively.

\subsection{Design of numerical experiments}
The SDP bounds that we take into account are $(SDP_{S2})$ and $(SDP_{S3})$, which we obtain via the \textsc{PRSM} and the \textsc{CP-ALM}, respectively. The \textsc{CP-ALM} is implemented as presented in Algorithm \ref{AlgCuttingPlaneALM}, i.e., using the \textsc{PRSM} and Dykstra's semi-parallel projection algorithm in the subproblem. We present results for different number of added cuts. To compare our SDP bounds, we use the first level RLT bound ($RLT1$), see Adams and Sherali \cite{AdamsSherali1986, AdamsSherali1990}, the MILP-based bound ($MILP$) and the linearization based bound ($LBB1$) from \cite{DeMeijerSotirov}. This latter bound is currently the best bound from the literature when taking both quality and efficiency into account. Since upper bounds for the \textsc{QCCP} are never considered before, we present and compare the upper bounds introduced in Section \ref{SectionUpper}.

All lower and upper bounds are implemented in Matlab on a PC with an Intel(R) Core(TM) i7-8700 CPU, 3.20 GHz and 8 GB RAM. The linear programming problems appearing in our approaches and in the computation of $MILP$, $LBB1$ and $RLT1$ are solved using CPLEX 12.7.1. All computation times reported in this section concern wall-clock times. \\ \\
We test our bounds on three sets of instances:
\begin{itemize}
\item \textbf{Reload instances:} The reload instances are the same as the ones used in Rostami et al.\ \cite{RostamiEtAl} for the \textsc{QTSP} and are based on a similar setting from Fischer et al.\ \cite{AandFFischer}. The underlying graph is the complete directed graph on $n$ nodes. The quadratic costs are based on the reload model \cite{WirthSteffan}, where each arc is randomly assigned a color from a color set $L$. The quadratic costs between two successive arcs with the same color is zero. If successive arcs $e$ and $f$ are assigned distinct colors $s$ and $t$, respectively, the costs equal $r(s,t)$, where $r: L \times L \rightarrow \{1, ..., D\}$ is a reload cost function. The function $r$ is constructed uniformly at random. We consider 60 instances with $n \in \{10, 15,20\}$, $D \in \{1,10\}$ and $|L| = 20$. As preliminary experiments show that the addition of cuts do not significantly improve the bounds, we only compute $(SDP_{S2})$  for these instances.
\item \textbf{Erd\H os-R\' enyi instances:} These instances are based on the $G(n,p)$ model by Erd\H os and R\' enyi \cite{ErdosRenyi}. A graph is constructed by fixing $n$ nodes and including each arc independently with probability $p$. We present two types of cost structures on these instances:
\begin{itemize}
\item \textit{Uniform Erd\H os-R\' enyi instances:} the quadratic cost between any pair of successive arcs is chosen discrete uniformly at random from $\{0, ..., 100\}$;
\item \textit{Reload Erd\H os-R\' enyi instances:} the quadratic cost between any pair of successive arcs is based on a reload cost model using 20 colors and reload costs drawn uniformly from $\{1, ..., 100\}$.
\end{itemize}
We consider 15 instances of each type for $n$ between 20 and 80 and $p$ between 0.3 and 0.5.
\item \textbf{Manhattan instances:} Comellas et al.\ \cite{ComellasEtAl} introduced multidimensional directed grid instances that resemble the street pattern of  cities like New York and Barcelona. Given a set of positive integers $(n_1, ..., n_k)$, the Manhattan instances are constructed as explained in \cite{DeMeijerSotirov}. The quadratic costs between any pair of successive arcs is chosen discrete uniformly at random out of $\{0, ..., 10\}$. We consider a set of 32 Manhattan instances ranging from type $(5,5)$ to type $(9,10,10)$.
\end{itemize}
{The interested reader can download all instances online\footnote{Instances can be found at \url{https://github.com/frankdemeijer/SDPforQCCP}.}.

Numerical results show that Erd\H os-R\' enyi instances and reload instances up to approximately 400 arcs can be solved to optimality within one hour, respectively. The computation limit for Manhattan instances is around 2000 arcs, due to the small density of these graphs. As all costs are integer, we round up all bounds.\\ \\
For the computation of the bounds we need to specify various parameters. The \textsc{PRSM} is implemented using $\beta = \lceil m / n \rceil$, $\gamma_1 = 0.9$ and $\gamma_2 = 1.09$, see (\ref{ZsubproblemPR})--(\ref{SsubproblemPR}), as preliminary experiments show that this setting gives the most stable performance. The \textsc{CP-ALM} uses the same \textsc{PRSM} parameters in the subproblem, where $K = 5$ is used in the semi-parallel implementation of (\ref{AlgCyclicDykstra}), see the third bullet on page \pageref{ParameterK}.
 The stopping criteria of the \textsc{PRSM} and the \textsc{CP-ALM} are as explained in Section~\ref{SubsubsectionStopping}, where we use $\varepsilon_{stag} = 10^{-5}$ and $\varepsilon_{proj} = 10^{-8}$. The parameter $\varepsilon_{PRSM}$ is initially set to $10^{-6}$, but after the addition of cuts increased to $ 10^{-4}$, since solving the $Y$-subproblem using Dykstra's algorithm is significantly slower than the initial $Y$-subproblem without cuts. Hence, we allow for a lower precision. For the same reason, the maximum number of iterations of the inner while-loop (i.e., $maxIter$, see Section~\ref{SubsubsectionStopping}) of the \textsc{CP-ALM} is initialized to some value and decreased when the first cuts are added, after which we do not change it anymore. For the Erd\H os-R\' enyi and Manhattan instances, we initialize $maxIter$ to 1000 and 1500, respectively, and decrease it to 500 after the addition of cuts. The initial iteration limit for the Manhattan instances is larger, as the \textsc{CP-ALM} needs more iterations to converge for these type of instances.

It turns out that the number of cuts added per main loop, i.e., the value of $numCuts$, see Section~\ref{sect:cutPlane}, is of major importance for the quality of the final bound. To demonstrate this behaviour, the lower bounds against the iteration number for a moderate-size Erd\H os-R\'enyi instance (ER\_4 with $n = 35$ and $m =361$) is plotted in Figure~\ref{PlotER4} for various values of $numCuts$ using an iteration limit of 2500. The base line shows the behaviour of the \textsc{PRSM}, i.e., the \textsc{CP-ALM} without the addition of cuts. It is clear that the addition of cuts after 1000 iterations immediately starts improving the bounds. Moreover, as one might expect, the addition of more cuts leads to a higher lower bound, although the largest improvement is due to the addition of the first few cuts. As the addition of more cuts also leads to higher computation times, a trade-off between quality and time has to be made. Based on preliminary experiments, we report results for $numCuts = 50, 150, 300$ and $500$ for the Erd\H os-R\'enyi instances. For the Manhattan instances, we only show results for $numCuts = 300$ and $500$, as the addition of a small number of cuts does not significantly improve the bounds.
\begin{figure}[h] \centering
\includegraphics[scale=0.51]{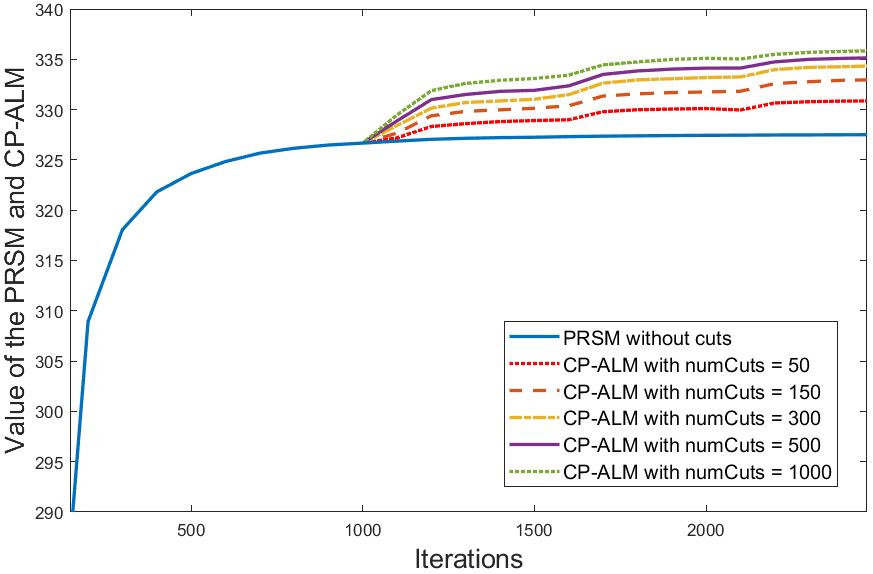}
\caption{Behaviour of the \textsc{PRSM} and the \textsc{CP-ALM} for different values of $numCuts$ for instance ER\_4. \label{PlotER4}}
\end{figure}

Finally, we need to specify the maximum total number of iterations $maxTotalIter$, see Section~\ref{SubsubsectionStopping}. For the reload instances we set this value to 2500 iterations, although the algorithm in most cases terminates earlier for these instances due to the other stopping criteria. The value of $maxTotalIter$ for the other two instance types is based on preliminary tests. Similar to the \textsc{PRSM} and the \textsc{ADMM}, the \textsc{CP-ALM} can suffer from tailing off. Since the addition of more cuts makes later iterations more expensive, one has to decide carefully when to stop. This threshold mainly depends on the value of $m$. Figure~\ref{PlotIterations} shows the behaviour of the lower bounds averaged over $numCuts = 150, 300$ and $500$ on three instances: a small, a moderate-size and a large instance. We normalize the bounds in order to make them comparable, i.e., the plots show the fraction of the final lower bound that is obtained after each iteration. Although at first sight there seems not much difference, one can see from the zoomed image on the right-hand side that the \textsc{CP-ALM} converges relatively faster for smaller instances.

\begin{figure}[h]
\includegraphics[scale=0.49]{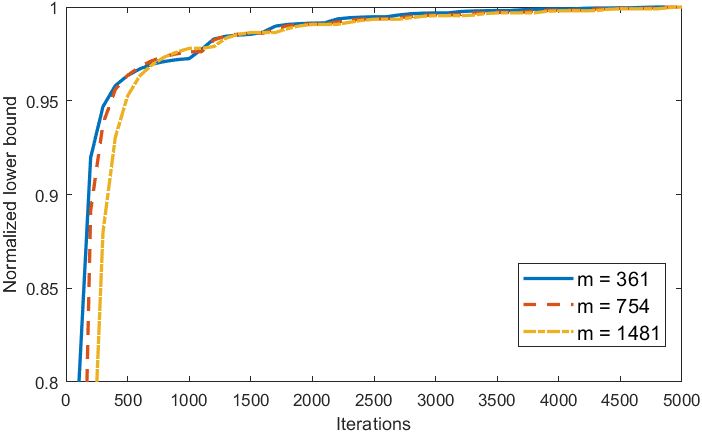}
\includegraphics[scale=0.52]{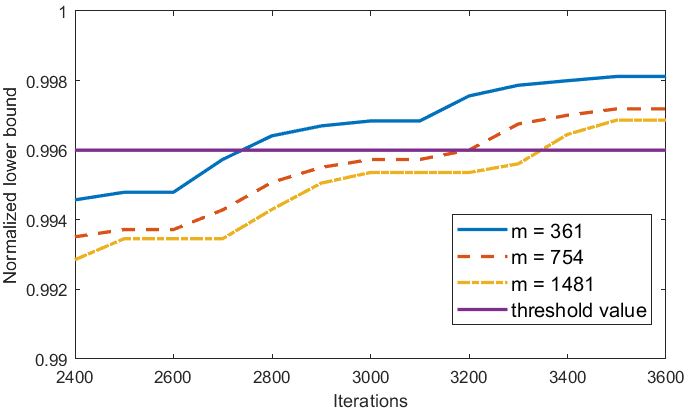}
\caption{Normalized lower bounds (averaged over different number of cuts) for three instances with different numbers of arcs.
Right figure shows zoomed plot including a threshold at 0.996. \label{PlotIterations}}
\end{figure}

\noindent Based on these preliminary results, the parameter $maxTotalIter$ is set to 2500, 3000 or 3500 if $m<500, 500 \leq m < 1000$ and $m \geq 1000$, respectively, for the Erd\H os-R\'enyi instances. For the Manhattan instances these values are 3000, 3500 and 4000, respectively, using the same distinction on $m$. \\ \\
For the computation of upper bounds, we compute the randomized undersampling and oversampling bounds 500 times for each instance and return the best value. For the SQ-algorithm, we use different parameter settings for each instance type based on preliminary tests. It turns out that the algorithm performs best if the value of $\delta$ is significantly larger than the value of $\beta$, see page~\pageref{ParameterDeltaBeta}, i.e., we put more emphasis on the SDP-based SQ-values than on the original quadratic costs. This difference seems more beneficial for larger $n$, since more agents provide more reliable information on useful cycles. Hence, we use $(\delta, \beta) = (20,1)$ for the Erd\H os-R\'enyi and Manhattan instances, while we use $(\delta, \beta) = (5,1)$ for the reload instances. Furthermore, we use $q_0 = 0.4$, $\gamma = 0.6$ and $\Omega = 3(m/n)$, see (\ref{successorRL}), (\ref{SQ-update}) and (\ref{delayedRL}), respectively, for all instance types. Finally, as it is not clear from our tests which value of the learning rate parameter $\alpha$, see \eqref{SQ-update},  provides the best results, we run the SQ-algorithm three times using $\alpha = 0.3, 0.5$ and $0.7$ and solve the final \textsc{SPP}, see (\ref{SPPproblem}), using all generated cycles. The number of iterations of the SQ-algorithm is set to 500 for the Erd\H os-R\'enyi and the reload instances, while it is set to 100 for the Manhattan instances, due to the large number of nodes. As the final \textsc{SPP}s can be solved efficiently by CPLEX for all our instances, we report the optimal \textsc{SPP} bounds.

\subsection{Results on Lower Bounds} \label{SubsectionResultsLower}
We now discuss our findings with respect to the lower bounds on all test instances. For the reload instances we compare the performance of $(SDP_{S2})$ to the performance of $(MILP), (LBB1)$ and $(RLT1)$. We omit the brackets from now on to indicate the bound values. Table~\ref{Table:reloadbounds} shows for each of the 60 reload instances the bound value resulting from each of the approaches. Table~\ref{Table:reloadtimes} shows all computation times for the reload instances, including the number of iterations and the average of the primal and dual residual, see Section~\ref{SubsubsectionStopping}, for the \textsc{PRSM}. To visualize the quality of the bounds over the entire reload test set, Figure~\ref{figure:Boxplot} shows a boxplot of the test data in Table~\ref{Table:reloadbounds}. On the $y$-axis the deviation from the average bound is presented, i.e., for each instance we compute the ratio of each single bound over the average value of the four bounds and these ratios are visualized per bound type.

\begin{figure}[h]
\centering
\includegraphics[scale=0.6]{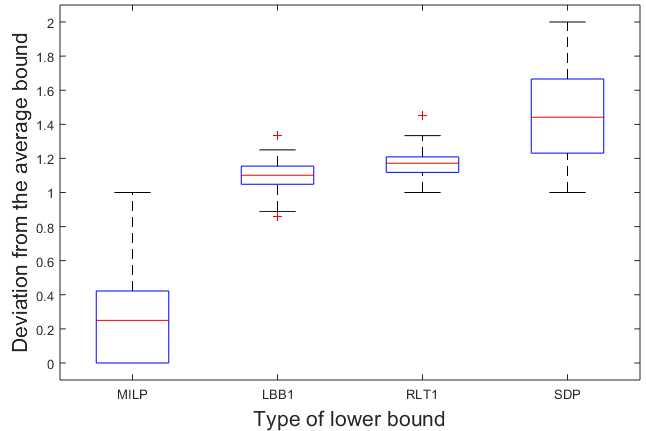}
\caption{Boxplot showing the quality of lower bounds $MILP$, $LBB1$, $RLT1$ and $SDP_{S2}$ on reload instances. \label{figure:Boxplot} }
\end{figure}

\noindent It follows from Table~\ref{Table:reloadbounds} and Figure~\ref{figure:Boxplot} that $SDP_{S2}$ clearly provides the strongest bounds, followed by $RLT1, LBB1$ and finally by $MILP$, which behaves poorly for most of the instances. In fact, the hierarchy $LBB1 \leq RLT1 \leq SDP_{S2}$ can be proven easily and holds with strict inequality for the majority of the instances. It can be seen that $SDP_{S2}$ performs generally about 1.5 times better than the average of the four bounds. The bound $SDP_{S2}$ even turns out to be optimal for 88\% of the instances. When considering Table~\ref{Table:reloadtimes}, it follows that although the computation times are larger than those of $MILP$ and $LBB1$, the SDP-bound can be computed efficiently for most of the instances. The computation times are always within 30 seconds and for 75\% of the instances within 10 seconds, while the computation time of $RLT1$ is above 90 seconds for 67\% of the instances. Moreover, although the optimum can be computed for all tested reload instances, the computation time is in some cases as large as 2000 seconds. Hence, for the reload instances we conclude that $SDP_{S2}$ can be favoured above other bounds in both quality and time.

Next, we consider the Erd\H os-R\'enyi instances. Table~\ref{Table:erbounds} shows the bound values for the Erd\H os-R\'enyi test set, among which the bounds $SDP_{S2}$ and $SDP_{S3}$ for various number of cuts. We do not consider the first level RLT bound, as it cannot be efficiently computed for the majority of the instances. The column $OPT$ reports the optimal solution if this solution could be computed in 3 hours and `-' otherwise. The computation times are reported in Table~\ref{Table:ertimes} and the average of primal and dual residual and the number of final cuts in the \textsc{CP-ALM} for the SDP bounds are reported in Table~\ref{Table:errescut}.

\begin{table}[H]
\centering
\scriptsize
\setlength{\tabcolsep}{2.5pt}
\begin{tabular}{@{}ccccccc@{}}
\toprule
\textbf{Instance} & \textbf{$n$} & \textbf{$D$} & \textbf{$MILP$} & \textbf{$LBB1$} & \textbf{$RLT1$} & \textbf{$SDP_{S2}$} \\ \midrule
REL1              & 10           & 1            & 3               & 4               & 4               & 4                   \\
REL2              &              & 10           & 3               & 9               & 9               & 9                   \\
REL3              &              & 1            & 3               & 4               & 5               & 5                   \\
REL4              &              & 10           & 3               & 8               & 9               & 12                  \\
REL5              &              & 1            & 3               & 4               & 4               & 4                   \\
REL6              &              & 10           & 5               & 12              & 13              & 14                  \\
REL7              &              & 1            & 3               & 4               & 5               & 5                   \\
REL8              &              & 10           & 4               & 9               & 11              & 11                  \\
REL9              &              & 1            & 2               & 2               & 2               & 2                   \\
REL10             &              & 10           & 4               & 9               & 11              & 12                  \\
REL11             &              & 1            & 2               & 3               & 3               & 3                   \\
REL12             &              & 10           & 5               & 9               & 9               & 9                   \\
REL13             &              & 1            & 2               & 4               & 4               & 4                   \\
REL14             &              & 10           & 3               & 9               & 11              & 11                  \\
REL15             &              & 1            & 3               & 4               & 4               & 4                   \\
REL16             &              & 10           & 3               & 8               & 9               & 11                  \\
REL17             &              & 1            & 4               & 4               & 4               & 4                   \\
REL18             &              & 10           & 3               & 8               & 9               & 10                  \\
REL19             &              & 1            & 3               & 5               & 5               & 5                   \\
REL20             &              & 10           & 3               & 10              & 11              & 11                  \\
REL21             & 15           & 1            & 2               & 4               & 4               & 5                   \\
REL22             &              & 10           & 2               & 9               & 9               & 12                  \\
REL23             &              & 1            & 1               & 3               & 3               & 4                   \\
REL24             &              & 10           & 1               & 7               & 8               & 11                  \\
REL25             &              & 1            & 1               & 4               & 5               & 5                   \\
REL26             &              & 10           & 1               & 6               & 6               & 9                   \\
REL27             &              & 1            & 1               & 4               & 4               & 4                   \\
REL28             &              & 10           & 1               & 7               & 7               & 9                   \\
REL29             &              & 1            & 1               & 5               & 5               & 6                   \\
REL30             &              & 10           & 0               & 6               & 7               & 10                  \\
\bottomrule
\end{tabular} \hspace{1cm}
\begin{tabular}{@{}ccccccc@{}}
\toprule
\textbf{Instance} & \textbf{$n$} & \textbf{$D$} & \textbf{$MILP$} & \textbf{$LBB1$} & \textbf{$RLT1$} & \textbf{$SDP_{S2}$} \\ \midrule
REL31             &    15          & 1            & 1               & 4               & 4               & 5                   \\
REL32             &              & 10           & 1               & 7               & 8               & 11                  \\
REL33             &              & 1            & 1               & 4               & 4               & 4                   \\
REL34             &              & 10           & 1               & 5               & 5               & 8                   \\
REL35             &              & 1            & 1               & 4               & 4               & 4                   \\
REL36             &              & 10           & 1               & 4               & 5               & 8                   \\
REL37             &              & 1            & 2               & 6               & 6               & 6                   \\
REL38             &              & 10           & 1               & 9               & 9               & 11                  \\
REL39             &              & 1            & 1               & 3               & 4               & 3                   \\
REL40             &              & 10           & 1               & 6               & 7               & 7                   \\
REL41             & 20           & 1            & 0               & 3               & 3               & 4                   \\
REL42             &              & 10           & 1               & 4               & 4               & 7                   \\
REL43             &              & 1            & 0               & 2               & 2               & 3                   \\
REL44             &              & 10           & 0               & 5               & 5               & 7                   \\
REL45             &              & 1            & 0               & 2               & 2               & 3                   \\
REL46             &              & 10           & 0               & 5               & 5               & 6                   \\
REL47             &              & 1            & 0               & 2               & 2               & 3                   \\
REL48             &              & 10           & 0               & 3               & 3               & 5                   \\
REL49             &              & 1            & 0               & 3               & 3               & 4                   \\
REL50             &              & 10           & 0               & 5               & 6               & 8                   \\
REL51             &              & 1            & 0               & 3               & 3               & 3                   \\
REL52             &              & 10           & 0               & 3               & 4               & 6                   \\
REL53             &              & 1            & 0               & 3               & 3               & 4                   \\
REL54             &              & 10           & 0               & 6               & 6               & 9                   \\
REL55             &              & 1            & 0               & 2               & 2               & 3                   \\
REL56             &              & 10           & 1               & 6               & 6               & 8                   \\
REL57             &              & 1            & 0               & 3               & 3               & 4                   \\
REL58             &              & 10           & 0               & 3               & 4               & 7                   \\
REL59             &              & 1            & 0               & 2               & 2               & 3                   \\
REL60             &              & 10           & 0               & 5               & 5               & 8                   \\ \bottomrule
\end{tabular}
\caption{Comparison of different bounds for reload instances. \label{Table:reloadbounds} }
\end{table}

\begin{table}[H]
\scriptsize
\centering
\setlength{\tabcolsep}{3pt}
\begin{tabular}{@{}ccccccc@{}}
\toprule
                  & \textbf{$MILP$} & \textbf{$LBB1$} & \textbf{$RLT1$} & \multicolumn{3}{c}{\textbf{$SDP_{S2}$}}      \\ \cmidrule(l){2-2} \cmidrule(l){3-3} \cmidrule(l){4-4} \cmidrule(l){5-7}
\textbf{Instance} & \textbf{time}   & \textbf{time}   & \textbf{time}   & \textbf{time} & \textbf{iter} & \textbf{res} \\ \midrule
REL1              & 0.112           & 0.007           & 1.004           & 0.278         & 437           & 0.003        \\
REL2              & 0.113           & 0.007           & 0.398           & 0.215         & 300           & 0.032        \\
REL3              & 0.111           & 0.006           & 0.350           & 0.751         & 1192          & 0.005        \\
REL4              & 0.106           & 0.006           & 0.380            & 0.624         & 978           & \textless{}0.001       \\
REL5              & 0.107           & 0.006           & 0.321           & 0.332         & 529           & 0.001        \\
REL6              & 0.106           & 0.006           & 0.358           & 0.230          & 377           & 0.045        \\
REL7              & 0.108           & 0.006           & 0.364           & 0.665         & 1044          & 0.005        \\
REL8              & 0.107           & 0.006           & 0.366           & 0.182         & 294           & 0.086        \\
REL9              & 0.107           & 0.006           & 0.217           & 0.370          & 594           & 0.042        \\
REL10             & 0.106           & 0.006           & 0.377           & 0.430          & 681           & 0.053        \\
REL11             & 0.108           & 0.006           & 0.320            & 0.358         & 586           & 0.027        \\
REL12             & 0.108           & 0.006           & 0.334           & 0.147         & 232           & 0.078        \\
REL13             & 0.108           & 0.005           & 0.384           & 0.273         & 441           & 0.004        \\
REL14             & 0.106           & 0.006           & 0.380            & 0.287         & 327           & 0.038        \\
REL15             & 0.108           & 0.005           & 0.284           & 0.396         & 589           & 0.039        \\
REL16             & 0.107           & 0.006           & 0.374           & 0.494         & 790           & 0.048        \\
REL17             & 0.109           & 0.005           & 0.218           & 0.425         & 695           & 0.040         \\
REL18             & 0.107           & 0.006           & 0.393           & 0.442         & 695           & 0.041        \\
REL19             & 0.108           & 0.005           & 0.425           & 0.659         & 1034          & 0.005        \\
REL20             & 0.106           & 0.006           & 0.408           & 0.170       & 278           & 0.170         \\
REL21             & 0.421           & 0.050           & 9.542           & 3.146         & 1371          & 0.007        \\
REL22             & 0.412           & 0.048           & 8.475           & 3.950          & 1684          & \textless{}0.001       \\
REL23             & 0.415           & 0.044           & 8.617           & 3.066         & 1277          & 0.006        \\
REL24             & 0.411           & 0.048           & 8.693           & 2.695         & 1124          & 0.001        \\
REL25             & 0.415           & 0.044           & 8.565           & 5.069         & 2149          & 0.006        \\
REL26             & 0.413           & 0.046           & 8.697           & 2.701         & 1117          & \textless{}0.001       \\
REL27             & 0.414           & 0.044           & 9.208           & 6.097         & 2500          & 0.012        \\
REL28             & 0.411           & 0.051           & 8.229           & 1.667         & 689           & 0.017        \\
REL29             & 0.414           & 0.045           & 8.249           & 1.972         & 818           & 0.006        \\
REL30             & 0.421           & 0.050            & 8.676           & 3.979         & 1626          & \textless{}0.001       \\ \bottomrule
\end{tabular} \hspace{0.5cm}
\begin{tabular}{@{}ccccccc@{}}
\toprule
                  & \textbf{$MILP$} & \textbf{$LBB1$} & \textbf{$RLT1$} & \multicolumn{3}{c}{\textbf{$SDP_{S2}$}}      \\ \cmidrule(l){2-2} \cmidrule(l){3-3} \cmidrule(l){4-4} \cmidrule(l){5-7}
\textbf{Instance} & \textbf{time}   & \textbf{time}   & \textbf{time}   & \textbf{time} & \textbf{iter} & \textbf{res} \\ \midrule
REL31             & 0.415           & 0.043           & 9.720            & 3.044         & 1279          & 0.006        \\
REL32             & 0.412           & 0.047           & 8.450            & 3.002         & 1183          & 0.005        \\
REL33             & 0.415           & 0.044           & 8.626           & 5.998         & 2500          & 0.022        \\
REL34             & 0.410            & 0.046           & 9.133           & 0.913         & 380           & 0.040         \\
REL35             & 0.419           & 0.048           & 8.277           & 5.998         & 2500          & 0.017        \\
REL36             & 0.413           & 0.048           & 8.711           & 1.925         & 775           & 0.010         \\
REL37             & 0.419           & 0.045           & 8.143           & 3.439         & 1417          & 0.006        \\
REL38             & 0.412           & 0.048           & 8.004           & 6.119         & 2500          & 0.032        \\
REL39             & 0.414           & 0.045           & 7.099           & 5.864         & 2433          & 0.028        \\
REL40             & 0.415           & 0.044           & 8.357           & 3.131         & 1291          & 0.066        \\
REL41             & 1.062           & 0.137           & 120.2           & 28.54         & 2500          & 0.008        \\
REL42             & 1.088           & 0.133           & 142.1           & 17.93         & 1501          & 0.001        \\
REL43             & 1.075           & 0.145           & 127.2           & 29.58         & 2500          & 0.007        \\
REL44             & 1.083           & 0.137           & 120.2           & 24.44         & 1990          & 0.001        \\
REL45             & 1.067           & 0.127           & 105.8           & 21.15         & 1781          & 0.007        \\
REL46             & 1.095           & 0.139           & 118.4           & 6.955         & 570           & 0.035        \\
REL47             & 1.067           & 0.155           & 158.5           & 19.05         & 1626          & 0.007        \\
REL48             & 1.091           & 0.138           & 184.2           & 4.340          & 357           & 0.059        \\
REL49             & 1.073           & 0.132           & 143.0             & 24.12         & 2067          & 0.007        \\
REL50             & 1.091           & 0.133           & 118.1           & 4.428         & 355           & 0.061        \\
REL51             & 1.067           & 0.136           & 97.63           & 19.34         & 1651          & 0.007        \\
REL52             & 1.093           & 0.130            & 162.1           & 23.48         & 1907          & 0.001        \\
REL53             & 1.076           & 0.138           & 128.9           & 29.56         & 2500          & 0.007        \\
REL54             & 1.080            & 0.129           & 107.1           & 21.26         & 1725          & 0.001        \\
REL55             & 1.064           & 0.146           & 128.8           & 30.50          & 2500          & 0.009        \\
REL56             & 1.086           & 0.132           & 127.0             & 19.09         & 1548          & 0.001        \\
REL57             & 1.071           & 0.152           & 122.1           & 19.68         & 1578          & 0.007        \\
REL58             & 1.095           & 0.136           & 145.5           & 5.723         & 430           & 0.041        \\
REL59             & 1.069           & 0.136           & 119.0             & 17.12         & 1439          & 0.007        \\
REL60             & 1.091           & 0.126           & 129.4           & 6.227         & 502           & 0.035        \\ \bottomrule
\end{tabular}
\caption{Computation times in seconds, average residuals and number of iterations for reload instances. \label{Table:reloadtimes}}
\end{table}
\noindent For the Erd\H os-R\'enyi instances we also see that $SDP_{S2}$ significantly outperforms $MILP$ and $LBB1$ in terms of quality of the bound. Moreover, it is clear that we can successfully improve the bounds by adding cuts using the new \textsc{CP-ALM}. Except for the instances where $SDP_{S2}$ is already optimal, we see that $SDP_{S3}$ provides a strictly higher bound already after adding 50 cuts at a time. For most instances, this improvement of $SDP_{S3}$ compared to $SDP_{S2}$ is about 3\%-6\%. Interestingly, this improvement seems to be independent of the problem size.
As we already observed in Figure~\ref{PlotER4}, we see that a higher value of $numCuts$ leads to a higher lower bound. This higher value comes, however, at the cost of computation time as can be seen from Table~\ref{Table:ertimes}. When taking both quality and efficiency into account, it seems beneficial to add only a small number of cuts, as this often leads to a significant increase of the bound at a relatively low computational cost. For instances up to 1000 arcs the \textsc{CP-ALM} terminates often within 30 minutes, while SDP bounds for instances up to 1850 arcs (!) can be computed within 2 hours. Hence, the \textsc{CP-ALM} is able to provide strong lower bounds for very large-scale SDPs in a reasonable time span, whereas the interior point method of Mosek \cite{Mosek} can solve $(SDP_{S2})$ for instances up to only 300 arcs without running out of memory. \vspace{-0.3cm}
\begin{table}[H]
\centering
\scriptsize
\setlength{\tabcolsep}{3pt}
\begin{tabular}{@{}cccccccccccc@{}}
\toprule
\multicolumn{1}{l}{}                  & \textbf{}    & \textbf{}    & \textbf{}    & \textbf{}      & \textbf{}       & \textbf{}       & \textbf{}           & \multicolumn{4}{c}{\textbf{$SDP_{S3}$}}                                                                                    \\ \cmidrule(l){9-12}
\multicolumn{1}{l}{\textbf{Instance}} & \textbf{$p$} & \textbf{$n$} & \textbf{$m$} & \textbf{$OPT$} & \textbf{$MILP$} & \textbf{$LBB1$} & \textbf{$SDP_{S2}$} & \textbf{\begin{tabular}[c]{@{}c@{}}$numCuts$\\ $50$\end{tabular}} & \textbf{\begin{tabular}[c]{@{}c@{}}$numCuts$\\ $150$\end{tabular}} & \textbf{\begin{tabular}[c]{@{}c@{}}$numCuts$\\ $300$\end{tabular}} & \textbf{\begin{tabular}[c]{@{}c@{}}$numCuts$\\ $500$\end{tabular}} \\ \midrule
ER1                                   & 0.3          & 20           & 119          & 319            & 165             & 260             & 319                 & 319                     & 319                                          & 319                      & 319                      \\
RER1                                  &              &              & 113          & 293            & 154             & 274             & 293                 & 293                     & 293                                          & 293                      & 293                      \\
ER2                                   &              & 25           & 177          & 386            & 167             & 305             & 386                 & 386                     & 386                                          & 386                      & 386                      \\
RER2                                  &              &              & 169          & 391            & 151             & 303             & 391                 & 391                     & 391                                          & 391                      & 391                      \\
ER3                                   &              & 30           & 284          & -              & 122             & 230             & 287                 & 292                     & 294                                          & 295                      & 296                      \\
RER3                                  &              &              & 256          & 281            & 69              & 208             & 258                 & 262                     & 264                                          & 265                      & 266                      \\
ER4                                   &              & 35           & 361          & -              & 138             & 273             & 328                 & 331                     & 333                                          & 335                      & 336                      \\
RER4                                  &              &              & 347          & -              & 61              & 189             & 233                 & 236                     & 238                                          & 239                      & 240         \\
ER5                                   &              & 40           & 468          & -              & 131             & 265             & 318                 & 321                     & 322                                          & 323                      & 324                      \\
RER5                                  &              &              & 495          & -              & 17              & 177             & 215                 & 217                     & 219                                          & 219                      & 220                      \\
ER6                                   &              & 45           & 592          & -              & 138             & 287             &       330                 & 333                     & 336                                          & 337                      & 338                       \\
RER6                                  &              &              & 623          & -              & 9               & 110             & 146                 & 148                     & 149                                          & 150                      & 151                      \\
ER7                                   &              & 50           & 754          & -              & 130             & 267             & 313                 & 316                     & 318                                          & 319                      & 319                      \\
RER7                                  &              &              & 746          & -              & 3               & 91              & 116                 & 117                     & 118                                          & 119                      & 119                      \\
ER8                                   &              & 60           & 1062         & -              & 118             & 272             & 301                 & 303                     & 304                                          & 305                      & 305                      \\
RER8                                  &              &              & 995          & -              & 1               & 74              & 93                  & 94                      & 95                                           & 95                       & 95                       \\
ER9                                   &              & 70           & 1481         & -              & 123             & 255             & 286                 & 287                     & 288                                          & 289                      & 289                      \\
RER9                                  &              &              & 1512         & -              & 0               & 99              & 131                 & 132                     & 132                                          & 133                      & 133                      \\
ER10                                  &              & 80           & 1842         & -              & 122             & 263             & 291                 & 292                     & 293                                          & 293                      & 293                      \\
RER10                                 &              &              & 1859         & -              & 0               & 33              & 52                  & 53                      & 53                                           & 53                       & 54                       \\
ER11                                  & 0.5          & 20           & 195          & 236            & 95              & 175             & 227                 & 232                     & 233                                          & 234                      & 234                      \\
RER11                                 &              &              & 194          & 172            & 34              & 136             & 172                 & 172                     & 172                                          & 172                      & 172                      \\
ER12                                  &              & 25           & 327          & -              & 67              & 136             & 169                 & 171                     & 172                                          & 173                      & 173                      \\
RER12                                 &              &              & 308          & 99             & 7               & 57              & 84                  & 85                      & 86                                           & 87                       & 87                       \\
ER13                                  &              & 30           & 434          & -              & 79              & 161             & 197                 & 200                     & 201                                          & 202                      & 202                      \\
RER13                                 &              &              & 435          & -              & 9               & 106             & 139                 & 141                     & 142                                          & 143                      & 143                      \\
ER14                                  &              & 40           & 793          & -              & 74              & 166             & 196                 & 198                     & 199                                          & 199                      & 200                      \\
RER14                                 &              &              & 770          & -              & 0               & 50              & 72                  & 73                      & 74                                           & 73                       & 74                       \\
ER15                                  &              & 50           & 1197         & -              & 77              & 165             & 188                 & 189                     & 190                                          & 191                      & 191                      \\
RER15                                 &              &              & 1235         & -              & 0               & 18              & 35                  & 36                      & 37                                           & 37                       & 37                       \\ \bottomrule
\end{tabular}
\caption{Comparison of different bounds for Erd\H os-R\'enyi instances. \label{Table:erbounds}} \vspace{-0.3cm}
\end{table}
\noindent Finally, we consider the performance of the lower bounds on the Manhattan instances, which can be found in Table~\ref{Table:mhbounds} and \ref{Table:mhtimes}. With respect to the quality of the bounds we can draw the same conclusions as before. Namely, the SDP bound $SDP_{S3}$ performs best on all instances, followed by $SDP_{S2}$. Since the optimal values for many of these instances can be computed, we moreover see that our SDP bounds are very close to optimal. Although we again see that the cuts can successfully improve the lower bounds, the relative improvement is smaller than for the Erd\H os-R\'enyi test set. An explanation can be found by looking at the residuals in Table~\ref{Table:mhtimesb}, which are significantly larger than the residuals for the first two types of instances. Apparently, the Manhattan instances need more iterations to converge, probably due to the inner structure of these instances. Stopping the \textsc{CP-ALM} when it has only partly converged, leads to weaker and less stable lower bounds. Namely, the reported lower bound is obtained by a projection of the current dual matrix, and further experiments show that in particular the dual residual converges slowly. The residuals increase with the size of the instance. Hence, we expect that even  better bounds for the Manhattan instances
 can be obtained by letting the \textsc{CP-ALM} run for more iterations.
However, we conclude from the current tables that the SDP bounds for the  Manhattan instances significantly outperform the bounds from the literature in a reasonable time span.

\begin{table}[H]
\centering
\scriptsize
\begin{tabular}{@{}cccccccc@{}}
\toprule
\textbf{}         & \textbf{}       & \textbf{}       & \textbf{}           & \multicolumn{4}{c}{\textbf{$SDP_{S3}$}}                                                                                                                                                                                                                                          \\ \cmidrule(l){5-8} 
\textbf{Instance} & \textbf{$MILP$} & \textbf{$LBB1$} & \textbf{$SDP_{S2}$} & \textbf{\begin{tabular}[c]{@{}c@{}}$numCuts$\\ $50$\end{tabular}} & \textbf{\begin{tabular}[c]{@{}c@{}}$numCuts$\\ $150$\end{tabular}} & \textbf{\begin{tabular}[c]{@{}c@{}}$numCuts$\\ $300$\end{tabular}} & \textbf{\begin{tabular}[c]{@{}c@{}}$numCuts$\\ $500$\end{tabular}} \\ \midrule
ER1               & 0.201           & 0.016           & 0.390               & 0.330                                                             & 0.470                                                              & 0.400                                                              & 0.330                                                              \\
RER1              & 0.194           & 0.008           & 0.150               & 0.140                                                             & 0.130                                                              & 0.130                                                              & 0.150                                                              \\
ER2               & 0.333           & 0.019           & 2.193               & 1.700                                                             & 1.790                                                              & 1.660                                                              & 1.710                                                              \\
RER2              & 0.319           & 0.016           & 3.520               & 18.37                                                             & 155.6                                                              & 55.27                                                              & 163.8                                                              \\
ER3               & 0.827           & 0.068           & 11.89               & 35.15                                                             & 128.6                                                              & 333.2                                                              & 924.8                                                              \\
RER3              & 0.673           & 0.042           & 8.120               & 29.55                                                             & 67.15                                                              & 110.0                                                              & 205.0                                                              \\
ER4               & 1.151           & 0.106           & 28.17               & 54.61                                                             & 86.61                                                              & 137.7                                                              & 237.3                                                              \\
RER4              & 1.166           & 0.107           & 24.83               & 60.40                                                             & 93.71                                                              & 161.4                                                              & 257.9                                                              \\
ER5               & 1.914           & 0.139           & 48.57               & 86.64                                                             & 121.2                                                              & 230.8                                                              & 526.3                                                              \\
RER5              & 2.088           & 0.153           & 53.83               & 89.76                                                             & 112.7                                                              & 148.2                                                              & 234.4                                                              \\
ER6               & 2.856           & 0.201           & 99.59               & 203.4                                                             & 253.4                                                              & 350.3                                                              & 499.2                                                              \\
RER6              & 3.048           & 0.220           & 113.4               & 200.4                                                             & 247.1                                                              & 345.7                                                              & 494.7                                                              \\
ER7               & 4.489           & 0.327           & 168.4               & 291.3                                                             & 356.6                                                              & 607.0                                                              & 1173                                                               \\
RER7              & 4.207           & 0.297           & 164.6               & 285.2                                                             & 358.1                                                              & 447.9                                                              & 601.8                                                              \\
ER8               & 10.824          & 0.625           & 463.0               & 870.8                                                             & 1000                                                               & 1306                                                               & 2198                                                               \\
RER8              & 7.969           & 0.529           & 340.5               & 624.0                                                             & 652.6                                                              & 748.8                                                              & 950.6                                                              \\
ER9               & 24.184          & 1.346           & 1305                & 2160                                                              & 2293                                                               & 2838                                                               & 4517                                                               \\
RER9              & 25.232          & 1.420           & 1381                & 2308                                                              & 2371                                                               & 2555                                                               & 2961                                                               \\
ER10              & 42.034          & 2.273           & 2446                & 4110                                                              & 4088                                                               & 4548                                                               & 7178                                                               \\
RER10             & 41.74           & 2.305           & 2516                & 4035                                                              & 4130                                                               & 4451                                                               & 4868                                                               \\
ER11              & 0.397           & 0.031           & 6.071               & 22.95                                                             & 67.97                                                              & 178.4                                                              & 512.1                                                              \\
RER11             & 0.415           & 0.027           & 4.640               & 13.53                                                             & 32.20                                                              & 51.59                                                              & 82.89                                                              \\
ER12              & 0.967           & 0.107           & 17.99               & 37.11                                                             & 70.64                                                              & 200.9                                                              & 524.0                                                              \\
RER12             & 0.862           & 0.088           & 14.07               & 31.89                                                             & 51.67                                                              & 97.69                                                              & 170.2                                                              \\
ER13              & 1.554           & 0.155           & 42.23               & 77.36                                                             & 108.5                                                              & 196.2                                                              & 472.9                                                              \\
RER13             & 1.579           & 0.150           & 42.59               & 77.82                                                             & 98.80                                                              & 142.1                                                              & 208.3                                                              \\
ER14              & 4.539           & 0.431           & 201.7               & 348.8                                                             & 493.6                                                              & 868.5                                                              & 2840                                                               \\
RER14             & 4.283           & 0.389           & 187.8               & 319.2                                                             & 373.9                                                              & 457.3                                                              & 627.8                                                              \\
ER15              & 12.663          & 1.068           & 721.6               & 1243                                                              & 1426                                                               & 1854                                                               & 3773                                                               \\
RER15             & 12.99           & 1.141           & 795.2               & 1438                                                              & 1427                                                               & 1559                                                               & 1775                                                               \\ \bottomrule
\end{tabular}
\caption{Comparison of computation times (in seconds) for Erd\H os-R\'enyi instances. \vspace{-0.3cm} \label{Table:ertimes}}
\end{table}

\begin{table}[H]
\centering
\scriptsize
\begin{tabular}{@{}cccccccccc@{}}
\toprule
\textbf{}         & \textbf{}           & \multicolumn{8}{c}{\textbf{$SDP_{S3}$}}                                                                                                                                                                                                                                                                                                                  \\ \cmidrule(l){3-10} 
\textbf{}         & \textbf{$SDP_{S2}$} & \multicolumn{2}{c}{\begin{tabular}[c]{@{}c@{}}$numCuts$\\ 50\end{tabular}} & \multicolumn{2}{c}{\begin{tabular}[c]{@{}c@{}}$numCuts$\\ 150\end{tabular}} & \multicolumn{2}{c}{\begin{tabular}[c]{@{}c@{}}$numCuts$\\ 300\end{tabular}} & \multicolumn{2}{c}{\begin{tabular}[c]{@{}c@{}}$numCuts$\\ 500\end{tabular}} \\ \cmidrule(lr){3-4} \cmidrule(lr){5-6} \cmidrule(lr){7-8} \cmidrule(lr){9-10}
\textbf{Instance} & \textbf{res}        & \textbf{res}                             & \textbf{cuts}                            & \textbf{res}                             & \textbf{cuts}                             & \textbf{res}                             & \textbf{cuts}                             & \textbf{res}                             & \textbf{cuts}                             \\ \midrule
ER1               & \textless{}0.001              & \textless{}0.001                                   & 0                                        & \textless{}0.001                                   & 0                                         & \textless{}0.001                                   & 0                                         & \textless{}0.001                                   & 0                                         \\
RER1              & \textless{}0.001              & \textless{}0.001                                   & 0                                        & \textless{}0.001                                   & 0                                         & \textless{}0.001                                   & 0                                         & \textless{}0.001                                   & 0                                         \\
ER2               & 0.003               & 0.002                                    & 0                                        & 0.002                                    & 0                                         & 0.003                                    & 0                                         & 0.003                                    & 0                                         \\
RER2              & \textless{}0.001              & \textless{}0.001                                   & 100                                      & \textless{}0.001                                   & 294                                       & \textless{}0.001                                   & 303                                       & \textless{}0.001                                   & 532                                       \\
ER3               & 0.002               & 0.002                                    & 148                                      & 0.003                                    & 446                                       & 0.003                                    & 878                                       & 0.003                                    & 1455                                      \\
RER3              & 0.002               & 0.003                                    & 150                                      & 0.003                                    & 444                                       & 0.004                                    & 882                                       & 0.005                                    & 1461                                      \\
ER4               & 0.002               & 0.002                                    & 148                                      & 0.003                                    & 450                                       & 0.003                                    & 899                                       & 0.003                                    & 1494                                      \\
RER4              & 0.003               & 0.002                                    & 150                                      & 0.002                                    & 446                                       & 0.002                                    & 887                                       & 0.003                                    & 1479                                      \\
ER5               & 0.002               & 0.002                                    & 150                                      & 0.002                                    & 449                                       & 0.007                                    & 898                                       & 0.003                                    & 1494                                      \\
RER5              & 0.003               & 0.003                                    & 150                                      & 0.003                                    & 450                                       & 0.003                                    & 899                                       & 0.004                                    & 1497                                      \\
ER6               & 0.001               & 0.003                                    & 200                                      & 0.004                                    & 598                                       & 0.005                                    & 1195                                      & 0.004                                    & 1992                                      \\
RER6              & 0.003               & 0.003                                    & 200                                      & 0.003                                    & 598                                       & 0.003                                    & 1199                                      & 0.003                                    & 1995                                      \\
ER7               & 0.001               & 0.002                                    & 200                                      & 0.002                                    & 597                                       & 0.002                                    & 1195                                      & 0.008                                    & 1996                                      \\
RER7              & 0.002               & 0.002                                    & 199                                      & 0.002                                    & 598                                       & 0.002                                    & 1191                                      & 0.002                                    & 1987                                      \\
ER8               & 0.001               & 0.002                                    & 250                                      & 0.002                                    & 750                                       & 0.002                                    & 1500                                      & 0.002                                    & 2500                                      \\
RER8              & 0.002               & 0.002                                    & 200                                      & 0.002                                    & 599                                       & 0.003                                    & 1198                                      & 0.003                                    & 1998                                      \\
ER9               & 0.001               & 0.001                                    & 250                                      & 0.002                                    & 749                                       & 0.002                                    & 1497                                      & 0.002                                    & 2497                                      \\
RER9              & 0.003               & 0.003                                    & 248                                      & 0.007                                    & 745                                       & 0.003                                    & 1497                                      & 0.003                                    & 2500                                      \\
ER10              & 0.002               & 0.002                                    & 250                                      & 0.002                                    & 750                                       & 0.002                                    & 1500                                      & 0.003                                    & 2500                                      \\
RER10             & 0.003               & 0.003                                    & 250                                      & 0.003                                    & 750                                       & 0.003                                    & 1499                                      & 0.003                                    & 2500                                      \\
ER11              & 0.001               & 0.003                                    & 148                                      & 0.003                                    & 440                                       & 0.003                                    & 877                                       & 0.006                                    & 1458                                      \\
RER11             & 0.055               & \textless{}0.001                                   & 50                                       & \textless{}0.001                                   & 150                                       & \textless{}0.001                                   & 300                                       & \textless{}0.001                                   & 500                                       \\
ER12              & 0.001               & 0.002                                    & 150                                      & 0.002                                    & 450                                       & 0.003                                    & 898                                       & 0.003                                    & 1498                                      \\
RER12             & 0.002               & 0.002                                    & 133                                      & 0.003                                    & 415                                       & 0.003                                    & 851                                       & 0.003                                    & 1431                                      \\
ER13              & 0.001               & 0.002                                    & 149                                      & 0.002                                    & 445                                       & 0.002                                    & 892                                       & 0.002                                    & 1481                                      \\
RER13             & 0.003               & 0.003                                    & 150                                      & 0.003                                    & 448                                       & 0.003                                    & 891                                       & 0.004                                    & 1481                                      \\
ER14              & 0.001               & 0.001                                    & 200                                      & 0.002                                    & 597                                       & 0.002                                    & 1196                                      & 0.004                                    & 1992                                      \\
RER14             & 0.002               & 0.003                                    & 200                                      & 0.002                                    & 600                                       & 0.019                                    & 1200                                      & 0.003                                    & 2000                                      \\
ER15              & 0.001               & 0.001                                    & 250                                      & 0.001                                    & 750                                       & 0.002                                    & 1500                                      & 0.002                                    & 2500                                      \\
RER15             & 0.003               & 0.003                                    & 250                                      & 0.003                                    & 750                                       & 0.003                                    & 1500                                      & 0.003                                    & 2500                                      \\ \bottomrule
\end{tabular}
\caption{Comparison of average residuals and total number of added cuts for Erd\H os-R\'enyi instances. \label{Table:errescut}}
\end{table}

\begin{table}[H]
\centering
\scriptsize
\setlength{\tabcolsep}{3pt}
\begin{tabular}{@{}cccccccccc@{}}
\toprule
\textbf{}         & \textbf{}     & \textbf{}    & \textbf{}    & \textbf{}      & \textbf{}       & \textbf{}       & \textbf{}           & \multicolumn{2}{c}{\textbf{$SDP_{S3}$}}             \\ \cmidrule(l){9-10}
\textbf{Instance} & \textbf{Type} & \textbf{$n$} & \textbf{$m$} & \textbf{$OPT$} & \textbf{$MILP$} & \textbf{$LBB1$} & \textbf{$SDP_{S2}$} & \textbf{\begin{tabular}[c]{@{}c@{}}$numCuts$\\ $300$\end{tabular}} & \textbf{\begin{tabular}[c]{@{}c@{}}$numCuts$\\ $500$\end{tabular}} \\ \midrule
MH1               & $(5,5)$       & 25           & 50           & 103            & 102             & 103             & 103                 & 103                      & 103                      \\
MH2               & $(10,10)$     & 100          & 200          & 418            & 394             & 418             & 418                 & 418                      & 418                      \\
MH3               & $(15,15)$     & 225          & 450          & 892            & 847             & 892             & 892                 & 892                      & 892                      \\
MH4               & $(16,16)$     & 256          & 512          & 1030           & 985             & 1030            & 1030                & 1030                     & 1030                     \\
MH5               & $(17,17)$     & 289          & 578          & 1226           & 1162            & 1214            & 1226                & 1226                     & 1226                     \\
MH6               & $(18,18)$     & 324          & 648          & 1283           & 1230            & 1282            & 1282                & 1283                     & 1283                     \\
MH7               & $(19,19)$     & 361          & 722          & 1448           & 1378            & 1446            & 1446                & 1446                     & 1446                     \\
MH8               & $(20,20)$     & 400          & 800          & 1539           & 1472            & 1537            & 1536                & 1537                     & 1537                     \\
MH9               & $(25,25)$     & 625          & 1250         & 2572           & 2439            & 2559            & 2568                & 2568                     & 2568                     \\
MH10              & $(4,4,4)$     & 64           & 192          & 199            & 156             & 193             & 199                 & 199                      & 199                      \\
MH11              & $(4,4,5)$     & 80           & 240          & 258            & 203             & 249             & 258                 & 258                      & 258                      \\
MH12              & $(4,5,5)$     & 100          & 300          & 343            & 260             & 324             & 342                 & 342                      & 342                      \\
MH13              & $(4,5,6)$     & 120          & 360          & 400            & 312             & 384             & 398                 & 400                      & 400                      \\
MH14              & $(5,5,5)$     & 125          & 375          & 391            & 304             & 376             & 391                 & 391                      & 391                      \\
MH15              & $(5,5,6)$     & 150          & 450          & 528            & 422             & 513             & 528                 & 528                      & 528                      \\
MH16              & $(5,6,6)$     & 180          & 540          & 607            & 479             & 586             & 607                 & 607                      & 607                      \\
MH17              & $(5,6,7)$     & 210          & 630          & 698            & 539             & 668             & 696                 & 697                      & 697                      \\
MH18              & $(6,6,6)$     & 216          & 648          & 700            & 561             & 683             & 697                 & 698                      & 699                      \\
MH19              & $(6,6,7)$     & 252          & 756          & 834            & 663             & 808             & 830                 & 832                      & 832                      \\
MH20              & $(6,7,7)$     & 294          & 882          & 994            & 779             & 958             & 990                 & 992                      & 992                      \\
MH21              & $(6,7,8)$     & 336          & 1008         & 1087           & 847             & 1047            & 1079                & 1083                     & 1083                     \\
MH22              & $(7,7,7)$     & 343          & 1029         & 1162           & 907             & 1107            & 1155                & 1158                     & 1159                     \\
MH23              & $(7,7,8)$     & 392          & 1176         & 1246           & 975             & 1201            & 1238                & 1241                     & 1242                     \\
MH24              & $(7,8,8)$     & 448          & 1344         & 1449           & 1135            & 1393            & 1439                & 1442                     & 1442                     \\
MH25              & $(7,8,9)$     & 504          & 1512         & 1645           & 1281            & 1576            & 1626                & 1631                     & 1631                     \\
MH26              & $(8,8,8)$     & 512          & 1536         & 1566           & 1247            & 1530            & 1555                & 1557                     & 1557                     \\
MH27              & $(8,8,9)$     & 576          & 1728         & 1883           & 1485            & 1817            & 1861                & 1866                     & 1867                     \\
MH28              & $(8,9,9)$     & 648          & 1944         & 2075           & 1643            & 2003            & 2057                & 2060                     & 2060                     \\
MH29              & $(8,9,10)$    & 720          & 2160         & 2339           & 1850            & 2259            & 2309                & 2313                     & 2314                     \\
MH30              & $(9,9,9)$     & 729          & 2187         & -              & 1894            & 2329            & 2416                & 2421                     & 2422                     \\
MH31              & $(9,9,10)$    & 810          & 2430         & -              & 2081            & 2535            & 2603                & 2608                     & 2608                     \\
MH32              & $(9,10,10)$   & 900          & 2700         & -              & 2304            & 2817            & 2886                & 2888                     & 2889                     \\ \bottomrule                   
\end{tabular}
\caption{Comparison of different bounds for Manhattan instances. \label{Table:mhbounds}}
\end{table}

\begin{table}[h]
\begin{subtable}[h]{0.45\textwidth}
\centering \scriptsize \setlength{\tabcolsep}{3pt}
\begin{tabular}{@{}cccccc@{}}
\toprule
\textbf{}         & \textbf{}       & \textbf{}       & \textbf{}           & \multicolumn{2}{c}{\textbf{$SDP_{S3}$}}                                                                                                 \\ \cmidrule(l){5-6} 
\textbf{} & \textbf{$MILP$} & \textbf{$LBB1$} & \textbf{$SDP_{S2}$} & \textbf{\begin{tabular}[c]{@{}c@{}}$numCuts$\\ $300$\end{tabular}} & \textbf{\begin{tabular}[c]{@{}c@{}}$numCuts$\\ $500$\end{tabular}} \\
\cmidrule(lr){5-5} \cmidrule(lr){6-6}
\textbf{Instance} & \textbf{time} & \textbf{time} & \textbf{time} & \textbf{time} & \textbf{time} \\ \midrule
MH1               & 0.103           & 0.016           & 0.047               & 0.030                                                              & 0.040                                                              \\
MH2               & 0.309           & 0.011           & 0.916               & 0.839                                                              & 0.882                                                              \\
MH3               & 2.665           & 0.036           & 20.78               & 57.07                                                              & 103.5                                                              \\
MH4               & 3.805           & 0.044           & 39.27               & 221.2                                                              & 414.6                                                              \\
MH5               & 5.406           & 0.057           & 52.62               & 380.2                                                              & 834.5                                                              \\
MH6               & 7.684           & 0.063           & 69.39               & 558.2                                                              & 1478.1                                                             \\
MH7               & 10.29           & 0.079           & 86.52               & 336.9                                                              & 588.5                                                              \\
MH8               & 13.44           & 0.118           & 112.3               & 489.5                                                              & 898.9                                                              \\
MH9               & 49.72           & 0.233           & 343.6               & 2226                                                               & 3210                                                               \\
MH10              & 0.329           & 0.013           & 3.138               & 40.71                                                              & 101.6                                                              \\
MH11              & 0.491           & 0.022           & 4.557               & 119.9                                                              & 251.0                                                              \\
MH12              & 0.722           & 0.033           & 3.847               & 3.717                                                              & 3.628                                                              \\
MH13              & 0.963           & 0.047           & 10.76               & 117.2                                                              & 202.6                                                              \\
MH14              & 1.103           & 0.052           & 18.83               & 133.8                                                              & 267.6                                                              \\
MH15              & 2.305           & 0.073           & 28.47               & 177.8                                                              & 377.5                                                              \\
MH16              & 3.520           & 0.092           & 54.29               & 252.1                                                              & 422.5                                                              \\
MH17              & 5.265           & 0.135           & 78.98               & 291.3                                                              & 429.1                                                              \\
MH18              & 5.548           & 0.138           & 84.35               & 364.7                                                              & 470.6                                                              \\
MH19              & 8.333           & 0.159           & 115.8               & 397.0                                                              & 527.1                                                              \\
MH20              & 12.98           & 0.191           & 162.0               & 512.9                                                              & 829.1                                                              \\
MH21              & 18.76           & 0.234           & 261.3               & 888.5                                                              & 1201                                                               \\
MH22              & 20.29           & 0.234           & 272.8               & 881.1                                                              & 1177                                                               \\
MH23              & 29.29           & 0.290           & 382.2               & 967.7                                                              & 1457                                                               \\
MH24              & 43.01           & 0.338           & 525.9               & 1482                                                               & 1697                                                               \\
MH25              & 61.54           & 0.425           & 670.5               & 1695                                                               & 1880                                                               \\
MH26              & 63.89           & 0.444           & 732.8               & 1726                                                               & 2265                                                               \\
MH27              & 90.82           & 0.512           & 939.7               & 2354                                                               & 2752                                                               \\
MH28              & 132.2           & 0.634           & 1227                & 2869                                                               & 3222                                                               \\
MH29              & 177.0           & 0.772           & 1597                & 3414                                                               & 4264                                                               \\
MH30              & 181.4           & 0.815           & 1600                & 3358                                                               & 3643                                                               \\
MH31              & 249.3           & 0.948           & 2096                & 4242                                                               & 4710                                                               \\
MH32              & 344.2           & 1.128           & 2773                & 5530                                                               & 5851                                                               \\ \bottomrule
\end{tabular}
\caption{Computation times (in seconds) \label{Table:mhtimesa}}
\end{subtable} \hspace{1cm} \begin{subtable}[h]{0.45\textwidth}
\centering \scriptsize \setlength{\tabcolsep}{3pt}\begin{tabular}{@{}cccccc@{}}
\toprule
                  &              & \multicolumn{4}{c}{$SDP_{S3}$}                                                                                                                                \\ \cmidrule(l){3-6} 
                  & $SDP_{S2}$   & \multicolumn{2}{c}{\begin{tabular}[c]{@{}c@{}}$numCuts$\\ $300$\end{tabular}} & \multicolumn{2}{c}{\begin{tabular}[c]{@{}c@{}}$numCuts$\\ $500$\end{tabular}} \\ \cmidrule(lr){3-4} \cmidrule(lr){5-6} 
\textbf{Instance} & \textbf{res} & \textbf{res}                          & \textbf{cuts}                         & \textbf{res}                          & \textbf{cuts}                         \\ \midrule
MH1               & 0.001        & 0.002                                 & 0                                     & \textless{}0.001                                & 0                                     \\
MH2               & \textless{}0.001       & \textless{}0.001                                & 0                                     & 0.006                                 & 0                                     \\
MH3               & \textless{}0.001       & \textless{}0.001                                & 300                                   & \textless{}0.001                                & 500                                   \\
MH4               & 0.013        & 0.013                                 & 600                                   & 0.013                                 & 1000                                  \\
MH5               & 0.017        & 0.011                                 & 1200                                  & 0.011                                 & 2000                                  \\
MH6               & 0.011        & 0.010                                  & 916                                   & 0.01                                  & 1500                                  \\
MH7               & 0.024        & 0.024                                 & 1200                                  & 0.024                                 & 2000                                  \\
MH8               & 0.024        & 0.023                                 & 1200                                  & 0.023                                 & 2000                                  \\
MH9               & 0.029        & 0.035                                 & 1500                                  & 0.034                                 & 2500                                  \\
MH10              & \textless{}0.001       & \textless{}0.001                                & 311                                   & \textless{}0.001                                & 569                                   \\
MH11              & 0.012        & \textless{}0.001                                & 574                                   & \textless{}0.001                                & 969                                   \\
MH12              & 0.002        & 0.004                                 & 0                                     & 0.004                                 & 0                                     \\
MH13              & 0.004        & 0.018                                 & 827                                   & 0.022                                 & 1322                                  \\
MH14              & 0.022        & 0.041                                 & 759                                   & 0.040                                  & 1284                                  \\
MH15              & 0.021        & 0.036                                 & 858                                   & 0.044                                 & 1219                                  \\
MH16              & 0.005        & 0.031                                 & 1072                                  & 0.037                                 & 1842                                  \\
MH17              & 0.009        & 0.031                                 & 1075                                  & 0.035                                 & 1781                                  \\
MH18              & 0.005        & 0.013                                 & 1117                                  & 0.016                                 & 1773                                  \\
MH19              & 0.012        & 0.027                                 & 1032                                  & 0.030                                  & 1707                                  \\
MH20              & 0.013        & 0.026                                 & 1121                                  & 0.030                                  & 1821                                  \\
MH21              & 0.008        & 0.019                                 & 1500                                  & 0.022                                 & 2500                                  \\
MH22              & 0.009        & 0.020                                  & 1500                                  & 0.022                                 & 2500                                  \\
MH23              & 0.010         & 0.020                                  & 1500                                  & 0.019                                 & 2500                                  \\
MH24              & 0.011        & 0.023                                 & 1500                                  & 0.026                                 & 2500                                  \\
MH25              & 0.013        & 0.024                                 & 1500                                  & 0.026                                 & 2500                                  \\
MH26              & 0.030         & 0.043                                 & 1500                                  & 0.047                                 & 2500                                  \\
MH27              & 0.025        & 0.037                                 & 1500                                  & 0.040                                  & 2500                                  \\
MH28              & 0.036        & 0.050                                  & 1500                                  & 0.054                                 & 2500                                  \\
MH29              & 0.034        & 0.044                                 & 1500                                  & 0.048                                 & 2500                                  \\
MH30              & 0.028        & 0.038                                 & 1500                                  & 0.040                                  & 2500                                  \\
MH31              & 0.039        & 0.048                                 & 1500                                  & 0.051                                 & 2500                                  \\
MH32              & 0.031        & 0.045                                 & 1500                                  & 0.047                                 & 2500                                  \\ \bottomrule
\end{tabular}
\caption{Average residuals and number of cuts \label{Table:mhtimesb}}
\end{subtable}
\caption{Comparison of computation times, average residuals and total number of added cuts for Manhattan instances. \label{Table:mhtimes}}
\end{table}

\subsection{Upper bounds and overall results} \label{SubsectionResultsUpper}
We discuss here the results on the upper bounds and provide an overview of the relative gap between best lower and upper bounds for all instances. Table~\ref{Table:upperboundsstats} shows several statistics related to the performance of the hybrid and non-hybrid upper bounds on the full test set. Besides, it provides the average percentage gap between best lower and upper bound per instance type. Table~\ref{Table:upperbounds} provides an overview of the best lower bound, best upper bound and their relative gap for the full set of instances.
For each instance and upper bound type, we compute the upper bound based on the SDP solution resulting from the \textsc{CP-ALM},
and select the best among all to report in  Table~\ref{Table:upperbounds}.
 Since, by construction, the hybrid algorithm always provides the best among all upper bounds, the last column of Table~\ref{Table:upperbounds} indicates which of the non-hybrid heuristics performs best when applied independently. Since all upper bounds can be computed relatively fast, we omit computation times here.

It follows from the tables that our bounds are very strong for the Manhattan and the reload instances,
as the average gap between the best lower and best upper bound using the hybrid heuristic is 1.25\% and 3.90\%, respectively.
For the Erd\H os-R\'enyi instances this gap is much larger.
Namely, it is known that the quality of a lower bound, and thus also of a related upper bound, deteriorate when the size of the problem increases. 
Also, the results indicate that the reload and Manhattan instances are easier to solve than the Erd\H os-R\'enyi instances for all here tested QCCP approaches.
Nevertheless, the average gap on the Erd\H os-R\'enyi instances with up to 1000 arcs is only 10\%.

\begin{table}[b]
\centering \scriptsize
\begin{tabular}{@{}lclc@{}}
\toprule
\multicolumn{4}{c}{\textbf{Statistics on upper bounds and average gaps}}                                                           \\ \midrule
Average gap on all instances                & \multicolumn{1}{c|}{20.02\%} & Percentage of instances $UB_{EB}$ performs best & 36.89\% \\
Average gap on Erd\H os-R\'enyi instances    & \multicolumn{1}{c|}{72.30\%} &
Percentage of instances $UB_{US}$ performs best & 53.28\% \\
Average gap on Manhattan instances          & \multicolumn{1}{c|}{1.25\%}  &
Percentage of instances $UB_{OS}$ performs best  & 68.85\% \\
Average gap on Reload instances             & \multicolumn{1}{c|}{3.90\%}  &
Percentage of instances $UB_{SQ}$ performs best & 77.87\% \\
Average gap on instances with $m \leq 1000$ & \multicolumn{1}{c|}{10.58\%} &
Percentage of instances $UB_{HY}$ strictly lower than others & 25.41\%\\ \bottomrule
\end{tabular}
\caption{Statistics on performance of upper bounds and average gaps on total test set. \label{Table:upperboundsstats}}
\end{table}

When comparing the different upper bounds, we conclude that the SQ-algorithm overall outperforms the other methods, followed by oversampling and undersampling rounding. We however observe a clear relationship with the instance type. For the Erd\H os-R\'enyi instances the SQ-algorithm is convincingly the best heuristic, while for the reload instances the other methods perform reliable as well, probably due to the smaller instance size. For the Manhattan instances, however, the sequential Q-learning heuristic performs well on the smaller instances, but is outperformed by oversampling rounding for larger $m$. This can be explained by the smaller number of iterations of the SQ-algorithm for these type of instances. Since the number of agents in the SQ-algorithm for the Manhattan instances is significantly larger than for the other instance types, we needed to decrease the number of iterations in order to be able to solve the resulting \textsc{SPP} efficiently. Hence, the learning effect of the SQ-algorithm is decreased, while it is in particular that part that makes the algorithm powerful. Nevertheless, we observe for almost all Manhattan instances that the hybrid algorithm obtains a strictly stronger upper bound than $UB_{EB}, UB_{US}$ or $UB_{OS}$. This means that the SQ-algorithm, although not always the favoured heuristic when implemented independently, creates cycles that can lead to an improvement of the best upper bound.

\begin{table}[H]
\centering \scriptsize \setlength{\tabcolsep}{2.5pt}
\begin{tabular}{@{}ccccc@{}}
\toprule
\textbf{Instance} & \textbf{\begin{tabular}[c]{@{}c@{}}Best\\ lower \\ bound\end{tabular}} & \textbf{\begin{tabular}[c]{@{}c@{}}Hybrid \\ upper \\ bound\end{tabular}} & \textbf{\begin{tabular}[c]{@{}c@{}}Gap \\ (\%)\end{tabular}} & \textbf{\begin{tabular}[c]{@{}c@{}}Best non-hybrid \\ heuristic\end{tabular}} \\ \midrule
ER1               & 319                                                                    &                                                                          319&                       0                                      &                                                                              EB, US, OS, SQ \\
RER1              & 293                                                                    &                                                                           293 &  0                                                           &                                                                              EB, US, OS, SQ \\
ER2               & 386                                                                    &                                                                           386&     0                                                        &                                                                              EB, US, OS, SQ \\
RER2              & 391                                                                    &                                                                           391&            0                                                 &                                                                              EB, US, OS, SQ \\
ER3               & 296                                                                    &                                                                           311&       5                                                      &                                                                              OS, SQ \\
RER3              & 266                                                                    &                                                                           288&        8                                                     &                                                                              OS \\
ER4               & 336                                                                    &                                                                           447 &           33                                                  &                                                                              SQ \\
RER4              & 240                                                                    &                                                                            294&            23                                                 &                                                                              SQ \\
ER5               & 324                                                                    &                                                                           404&           25                                                  &                                                                              SQ \\
RER5              & 220                                                                    &                                                                           321&           46                                                  &                                                                              SQ \\
ER6               & 338                                                                    &                                                                           451 &         33                                                    &                                                                              SQ \\
RER6              & 151                                                                    &                                                                           253 &       68                                                      &                                                                              SQ \\
ER7               & 319                                                                    &                                                                           493 &        55                                                     &                                                                              SQ \\
RER7              & 119                                                                    &                                                                           236 &       98                                                      &                                                                              SQ \\
ER8               & 305                                                                    &                                                                           525 &       72                                                      &                                                                              SQ \\
RER8              & 95                                                                     &                                                                           283 &       198                                                      &                                                                              SQ \\
ER9               & 289                                                                    &                                                                           520 &      80                                                       &                                                                              SQ \\
RER9              & 133                                                                    &                                                                           399 &      200                                                       &                                                                              SQ \\
ER10              & 293                                                                    &                                                                           455 &    55                                                         &                                                                              SQ \\
RER10             & 54                                                                     &                                                                           312 &    478                                                         &                                                                              SQ \\
ER11              & 234                                                                    &                                                                           236 &    1                                                         &                                                                              SQ \\
RER11             & 172                                                                    &                                                                           172 &    0                                                         &                                                                              US, OS, SQ \\
ER12              & 173                                                                    &                                                                           187 &   8                                                          &                                                                              SQ \\
RER12             & 87                                                                     &                                                                           113 &   30                                                          &                                                                              SQ \\
ER13              & 202                                                                    &                                                                           245 &   21                                                          &                                                                              SQ \\
RER13             & 143                                                                    &                                                                           169 &   18                                                          &                                                                              SQ \\
ER14              & 200                                                                    &                                                                           280 &    40                                                         &                                                                              SQ \\
RER14             & 74                                                                     &                                                                           170 &    130                                                         &                                                                              SQ \\
ER15              & 191                                                                    &                                                                           326 &    71                                                         &                                                                              SQ \\
RER15             & 37                                                                     &                                                                           175 &    373                                                         &                                                                              SQ \\
MH1               & 103                                                                    & 103                                                                       & 0                                                           & EB, US, OS, SQ                                                            \\
MH2               & 418                                                                    & 418                                                                       & 0                                                           & EB, US, OS, SQ                                                            \\
MH3               & 892                                                                    & 892                                                                       & 0                                                           & EB, US, OS, SQ                                                                \\
MH4               & 1030                                                                   & 1030                                                                      & 0                                                           & EB, US, OS, SQ                                                            \\
MH5               & 1226                                                                   & 1226                                                                      & 0                                                           & EB, US, OS                                                                    \\
MH6               & 1283                                                                   & 1283                                                                      & 0                                                           & EB, US, OS, SQ                                                                \\
MH7               & 1446                                                                   & 1448                                                                      & 0                                                           & EB, US, OS, SQ                                                                \\
MH8               & 1537                                                                   & 1539                                                                      & 0                                                           & EB                                                                        \\
MH9               & 2568                                                                   & 2572                                                                      & 0                                                           & EB                                                                        \\
MH10              & 199                                                                    & 199                                                                       & 0                                                           & EB, US, OS, SQ                                                                \\
MH11              & 258                                                                    & 258                                                                       & 0                                                           & EB, US, OS, SQ                                                                \\
MH12              & 342                                                                    & 348                                                                       & 2                                                           & US                                                                            \\
MH13              & 400                                                                    & 400                                                                       & 0                                                           & EB, US, OS                                                                \\
MH14              & 391                                                                    & 391                                                                       & 0                                                           & EB, US, OS                                                                \\
MH15              & 528                                                                    & 528                                                                       & 0                                                           & EB, US, OS                                                                \\
MH16              & 607                                                                    & 607                                                                       & 0                                                           & EB, US, OS                                                                \\
MH17              & 697                                                                    & 698                                                                       & 0                                                           & OS                                                                            \\
MH18              & 699                                                                    & 706                                                                       & 1                                                           & OS                                                                            \\
MH19              & 832                                                                    & 839                                                                       & 1                                                           & US, OS                                                                        \\
MH20              & 992                                                                    & 999                                                                       & 1                                                           & OS                                                                            \\
MH21              & 1083                                                                   & 1093                                                                      & 1                                                           & OS                                                                            \\
MH22              & 1159                                                                   & 1171                                                                      & 1                                                           & OS                                                                            \\
MH23              & 1242                                                                   & 1272                                                                      & 2                                                           & OS                                                                            \\
MH24              & 1442                                                                   & 1498                                                                      & 4                                                           & OS                                                                            \\
MH25              & 1631                                                                   & 1702                                                                      & 4                                                           & OS                                                                            \\
MH26              & 1557                                                                   & 1576                                                                      & 1                                                           & OS                                                                            \\
MH27              & 1867                                                                   & 1940                                                                      & 4                                                           & OS                                                                            \\
MH28              & 2060                                                                   & 2141                                                                      & 4                                                           & OS                                                                            \\
MH29              & 2314                                                                   & 2426                                                                      & 6                                                           & OS                                                                            \\
MH30              & 2422                                                                   & 2552                                                                      & 5                                                           & OS                                                                            \\
MH31              & 2608                                                                   & 2775                                                                      & 6                                                           & OS                                                                            \\
MH32              & 2889                                                                   & 3077                                                                      & 7                                                           & OS                                                                            \\ \bottomrule
\end{tabular}\hspace{1cm} \begin{tabular}{@{}ccccc@{}}
\toprule
\textbf{Instance} & \textbf{\begin{tabular}[c]{@{}c@{}}Best\\ lower \\ bound\end{tabular}} & \textbf{\begin{tabular}[c]{@{}c@{}}Hybrid \\ upper \\ bound\end{tabular}} & \textbf{\begin{tabular}[c]{@{}c@{}}Gap \\ (\%)\end{tabular}} & \textbf{\begin{tabular}[c]{@{}c@{}}Best non-hybrid \\ heuristic\end{tabular}} \\ \midrule
REL1              & 4                                                                      & 4                                                                         & 0                                                           & EB, US, OS, SQ                                                            \\
REL2              & 9                                                                      & 9                                                                         & 0                                                           & EB, US, OS, SQ                                                            \\
REL3              & 5                                                                      & 5                                                                         & 0                                                           & EB, US, OS, SQ                                                                \\
REL4              & 12                                                                     & 12                                                                        & 0                                                           & US                                                                            \\
REL5              & 4                                                                      & 4                                                                         & 0                                                           & EB, US, OS, SQ                                                                \\
REL6              & 14                                                                     & 14                                                                        & 0                                                           & US, OS, SQ                                                                    \\
REL7              & 5                                                                      & 5                                                                         & 0                                                           & US, OS, SQ                                                                    \\
REL8              & 11                                                                     & 11                                                                        & 0                                                           & EB, US, OS, SQ                                                                \\
REL9              & 2                                                                      & 2                                                                         & 0                                                           & EB, US, OS, SQ                                                            \\
REL10             & 12                                                                     & 12                                                                        & 0                                                           & EB, US, OS, SQ                                                            \\
REL11             & 3                                                                      & 3                                                                         & 0                                                           & EB, US, OS, SQ                                                                \\
REL12             & 9                                                                      & 9                                                                         & 0                                                           & EB, US, OS, SQ                                                                \\
REL13             & 4                                                                      & 4                                                                         & 0                                                           & US, OS, SQ                                                                \\
REL14             & 11                                                                     & 11                                                                        & 0                                                           & EB, US, OS, SQ                                                                \\
REL15             & 4                                                                      & 4                                                                         & 0                                                           & EB, US, OS, SQ                                                                \\
REL16             & 11                                                                     & 11                                                                        & 0                                                           & EB, US, OS, SQ                                                            \\
REL17             & 4                                                                      & 4                                                                         & 0                                                           & EB, US, OS, SQ                                                            \\
REL18             & 10                                                                     & 10                                                                        & 0                                                           & EB, US, OS, SQ                                                                \\
REL19             & 5                                                                      & 5                                                                         & 0                                                           & EB, US, OS, SQ                                                                \\
REL20             & 11                                                                     & 11                                                                        & 0                                                           & EB, US, OS, SQ                                                                \\
REL21             & 5                                                                      & 5                                                                         & 0                                                           & US, OS, SQ                                                                    \\
REL22             & 12                                                                     & 12                                                                        & 0                                                           & SQ                                                                            \\
REL23             & 4                                                                      & 4                                                                         & 0                                                           & US, OS, SQ                                                                \\
REL24             & 11                                                                     & 11                                                                        & 0                                                           & US, OS, SQ                                                                    \\
REL25             & 5                                                                      & 5                                                                         & 0                                                           & EB, US, OS, SQ                                                                \\
REL26             & 9                                                                      & 10                                                                        & 11                                                          & SQ                                                                            \\
REL27             & 4                                                                      & 4                                                                         & 0                                                           & US, OS, SQ                                                                    \\
REL28             & 9                                                                      & 9                                                                         & 0                                                           & US, OS, SQ                                                                \\
REL29             & 6                                                                      & 6                                                                         & 0                                                           & US, OS, SQ                                                                    \\
REL30             & 10                                                                     & 10                                                                        & 0                                                           & US, OS, SQ                                                                    \\
REL31             & 5                                                                      & 5                                                                         & 0                                                           & US, OS, SQ                                                                    \\
REL32             & 11                                                                     & 11                                                                        & 0                                                           & US, OS, SQ                                                                    \\
REL33             & 4                                                                      & 4                                                                         & 0                                                           & US, OS, SQ                                                                    \\
REL34             & 8                                                                      & 8                                                                         & 0                                                           & OS, SQ                                                                        \\
REL35             & 4                                                                      & 4                                                                         & 0                                                           & EB, US, OS, SQ                                                            \\
REL36             & 8                                                                      & 8                                                                         & 0                                                           & US, OS, SQ                                                                \\
REL37             & 6                                                                      & 6                                                                         & 0                                                           & US, OS, SQ                                                                \\
REL38             & 11                                                                     & 11                                                                        & 0                                                           & EB, US, OS, SQ                                                                \\
REL39             & 3                                                                      & 3                                                                         & 0                                                           & EB, US, OS, SQ                                                            \\
REL40             & 7                                                                      & 7                                                                         & 0                                                           & EB, US, OS, SQ                                                            \\
REL41             & 4                                                                      & 4                                                                         & 0                                                           & EB, US, OS, SQ                                                                \\
REL42             & 7                                                                      & 11                                                                        & 57                                                          & SQ                                                                            \\
REL43             & 3                                                                      & 3                                                                         & 0                                                           & EB, US, OS, SQ                                                                \\
REL44             & 7                                                                      & 10                                                                        & 43                                                          & SQ                                                                            \\
REL45             & 3                                                                      & 3                                                                         & 0                                                           & SQ                                                                            \\
REL46             & 6                                                                      & 6                                                                         & 0                                                           & EB, US, OS, SQ                                                            \\
REL47             & 3                                                                      & 3                                                                         & 0                                                           & SQ                                                                            \\
REL48             & 5                                                                      & 5                                                                         & 0                                                           & OS, SQ                                                                    \\
REL49             & 4                                                                      & 4                                                                         & 0                                                           & US, OS, SQ                                                                    \\
REL50             & 8                                                                      & 8                                                                         & 0                                                           & OS                                                                            \\
REL51             & 3                                                                      & 3                                                                         & 0                                                           & SQ                                                                            \\
REL52             & 6                                                                      & 9                                                                         & 50                                                          & SQ                                                                            \\
REL53             & 4                                                                      & 4                                                                         & 0                                                           & EB, US, OS, SQ                                                            \\
REL54             & 9                                                                      & 11                                                                        & 22                                                          & SQ                                                                            \\
REL55             & 3                                                                      & 3                                                                         & 0                                                           & EB, US, OS, SQ                                                                \\
REL56             & 8                                                                      & 11                                                                        & 38                                                          & SQ                                                                            \\
REL57             & 4                                                                      & 4                                                                         & 0                                                           & US, OS, SQ                                                                    \\
REL58             & 7                                                                      & 7                                                                         & 0                                                           & US, OS, SQ                                                                    \\
REL59             & 3                                                                      & 3                                                                         & 0                                                           & SQ                                                                        \\
REL60             & 8                                                                      & 9                                                                         & 13                                                          & OS, SQ                                                                        \\ \bottomrule \\ \\ 
\end{tabular}
\caption{Overview of best lower bounds, best hybrid and non-hybrid upper bounds and their relative gaps for all instances. \label{Table:upperbounds}}
\end{table}

\section{Conclusions}
This paper provides an in-depth theoretical as well as practical study on the \textsc{QCCP}. 
We provide various lower and upper bounds for the \textsc{QCCP} based on semidefinite programming.
Moreover, we introduce efficient methods to compute these bounds and give an analysis of their theoretical properties.

We first introduce three SDP relaxations with increasing complexity.
Our strongest SDP relaxation, $(SDP_{S3})$, see~\eqref{SDPS3},
  contains a large number of constraints which  makes it a strong but very difficult to solve relaxation.
Since there are no efficient solvers that can solve SDP relaxations including BQP cuts, we derive  a cutting plane augmented Lagrangian method that is designed to solve such relaxations, 
see  Algorithm~\ref{AlgCuttingPlaneALM}. 
Our algorithm starts from the Peaceman--Rachford splitting method where the involved polyhedral set is strengthened throughout the algorithm by adding valid cuts.
To project onto the polyhedral set, we implement a semi-parallelized version of Dykstra's cyclic projection algorithm, see Section~\ref{SubsectionDykstra} for details.
Parallelization here refers to  clustering the set of BQP inequalities into subsets of nonoverlapping cuts.
Besides the parallelization step we implement several other efficiency improving steps that contribute to the effectiveness  of the \textsc{CP-ALM}.
Our algorithm also benefits from warm starts when adding new cuts.
The \textsc{CP-ALM} is able to compute lower bounds for large instances up to 2700 arcs, thus having a semidefinite constraint of order 2700, including 7,290,000 nonnegative constrained variables, and up to 2500 BQP cuts within two hours.

We also introduce several upper bounding approaches that exploit matrices resulting from the \textsc{CP-ALM}, including 
 randomized undersampling (see Algorithm~\ref {AlgUR}) and  randomized oversampling (see Algorithm~\ref{AlgOR}).
Additionally, we propose an SDP-based distributed reinforcement learning algorithm, which we call sequential Q-learning, see Algorithm~\ref{AlgSQR}.
Starting from the SDP solution matrix, we let artificial agents learn how to find near-optimal cycles in the graph.
We are not aware of other approaches in the literature that combine SDP and reinforcement learning.

We perform  extensive numerical  experiments.
Our numerical results show that both semidefinite programming bounds $SDP_{S2}$ and $SDP_{S3}$ outperform the current strongest \textsc{QCCP} bounds.
The results show that $SDP_{S3}$ bounds are significantly better than $SDP_{S2}$ bounds, provided that there exist violated triangle inequalities.
Among the upper bounding approaches, our sequential Q-learning algorithm is the winner.
The average gap between the best lower and upper bounds on test instances with up to 1000 arcs is about 10\%, while for certain instances this average gap can be as low as 1.25\%, see Table \ref{Table:upperboundsstats} and Table \ref{Table:upperbounds} for details. \\ \\
Several of the newly introduced approaches can be extended to other problems. The various components of the \textsc{CP-ALM} are rather general, which make it possible to adopt it for solving other SDP models that involve large number of cutting planes, such as for the quadratic traveling salesman problem.
Our SDP-based reinforcement learning approach  can also be extended for finding feasible solutions of other optimization problems.
We expect that the sequential Q-learning approach should perform well for  problems on complete graphs.
Finally, an interested reader can download our code for computing a basis for the flow space of the bipartite representation of a directed graph.\\\\
\textbf{Acknowledgements.} We would like to thank Christoph Helmberg for an insightful discussion about the graph theoretical interpretation of the facial reduction.
We would also like to thank Dion Gijswijt for carefully reading the manuscript and giving his valuable feedback.
Moreover, we thank Borzou Rostami for sharing the reload instances with us. Finally, we thank two anonymous referees for improving an earlier version of this work.

\newpage
\thispagestyle{empty}
\begin{appendices}

\section{Proof of Lemma \ref{LemmaProjectionBQP}}
\label{AppendixProofLemma}
\begin{proof} [\unskip\nopunct]
$\mathcal{P}_{\mathcal{H}_{efg}}(M)$ equals the solution of the following convex optimization problem:
\begin{align*}
\min_{\hat{M} \in \mathcal{S}^{m+1}} \left\{ \Vert \hat{M} - M \Vert_F^2  \, : \, \, \hat{M} \in \mathcal{H}_{efg} \right\}.
\end{align*}
Since $\hat{M}_{st} = M_{st}$ for all entries $(s,t)$ that are not involved in the constraints, this optimization problem can be rewritten as:
\begin{align*}
\min_{\delta, \theta, \mu, \pi} \quad & 2 (\delta - M_{ef})^2 + 2 ( \theta - M_{eg})^2 + 2(\mu - M_{fg})^2 + (\pi - M_{ee})^2 + 2 (\pi - M_{0e})^2 \\
\text{s.t.} \quad & \delta + \theta \leq \pi + \mu.
\end{align*}
The explicit expression of $\hat{M}$ follows from the KKT-conditions of the problem above. Let $\lambda \geq 0$ be the Lagrange multiplier of the inequality $\delta + \theta \leq \pi + \mu$. Then, the KKT conditions lead to the following system:
\begin{align*}
\begin{cases} 4 (\delta - M_{ef}) + \lambda = 0 \\
4(\theta - M_{eg}) + \lambda = 0 \\
4(\mu - M_{fg}) - \lambda = 0 \\
2(\pi - M_{ee}) + 4(\pi - M_{0e}) - \lambda = 0 \\
\lambda \geq 0 \\
\lambda(\delta + \theta - \mu - \pi) = 0 \\
\delta + \theta \leq \pi + \mu.
\end{cases}
\end{align*}
Complementarity implies that either $\mu = \delta + \theta - \pi$ or $\lambda = 0$. The latter case leads to the KKT-point $(\delta, \theta, \mu, \pi) = (M_{ef}, M_{eg}, M_{fg}, \frac{M_{ee} + 2M_{0e}}{3})$, which is optimal if and only if $M_{ef} + M_{eg} \leq \frac{M_{ee} + 2M_{0e}}{3} + M_{fg}$. If this inequality does not hold, the substitution $\mu = \delta + \theta - \pi$ leads to the system
\begin{align*}
\begin{cases} 4 (\delta - M_{ef}) + \lambda = 0 \\
4(\theta - M_{eg}) + \lambda = 0 \\
4(\delta + \theta - \pi - M_{fg}) - \lambda = 0 \\
6\pi - 2M_{ee} - 4M_{0e} - \lambda = 0
\end{cases} & \quad \Leftrightarrow & \begin{cases} \delta = -\frac{1}{4}\lambda + M_{ef} \\
\theta = -\frac{1}{4}\lambda + M_{eg} \\
4(\delta + \theta - \pi - M_{fg}) - \lambda = 0 \\
\lambda = 6\pi - 2M_{ee} - 4M_{0e}
\end{cases}
\end{align*}
Substitution into the third equation yields
\begin{align*}
4\pi & = 4\left( - \frac{1}{4}\lambda + M_{ef} - \frac{1}{4}\lambda + M_{eg}  - M_{fg}\right) -\lambda \\
\Leftrightarrow \qquad 4\pi & = -3\lambda + 4M_{ef} + 4M_{eg} - 4M_{fg} \\
\Leftrightarrow \qquad 4\pi & = -3\left(6\pi - 2M_{ee} - 4M_{0e}\right)  + 4M_{ef} + 4M_{eg} - 4M_{fg} \\
\Leftrightarrow \qquad \,\,\, \pi & = \frac{3}{11}M_{ee} + \frac{6}{11}M_{0e} - \frac{2}{11}M_{fg} + \frac{2}{11}M_{ef} + \frac{2}{11}M_{eg}.
\end{align*}
By substitution of this expression into the remaining three equations, we obtain:
\begin{align*}
\lambda & = -\frac{4}{11}M_{ee} - \frac{8}{11}M_{0e} - \frac{12}{11}M_{fg} + \frac{12}{11}M_{ef} + \frac{12}{11}M_{eg}, \\
\delta & = \frac{1}{11}M_{ee} + \frac{2}{11}M_{0e} + \frac{3}{11}M_{fg} + \frac{8}{11}M_{ef} - \frac{3}{11}M_{eg}, \\
\theta & = \frac{1}{11}M_{ee} + \frac{2}{11} M_{0e} + \frac{3}{11}M_{fg} - \frac{3}{11}M_{ef} + \frac{8}{11}M_{eg}, \\
\mu & = -\frac{1}{11}M_{ee} - \frac{2}{11}M_{0e} + \frac{8}{11}M_{fg} + \frac{3}{11}M_{ef} + \frac{3}{11}M_{eg}.
\end{align*}
By setting $\hat{M}_{st} = \delta$ for $(s,t) \in \{(e,f),(f,e)\}$, $\hat{M}_{st} = \theta$ for $(s,t) \in \{(e,g),(g,e)\}$, $\hat{M}_{st} = \mu$ for $(s,t) \in \{(f,g),(g,f)\}$ and $\hat{M}_{st} = \pi$ for $(s,t) \in \{(0,e),(e,0),(e,e)\}$, the claim follows.
\end{proof}

\section{Dykstra's parallel projection algorithm} \label{AppendixParallelDykstra}
\thispagestyle{empty}
In Section \ref{SubsectionDykstra} Dykstra's cyclic algorithm is presented to iteratively project onto the polyhedra induced by the BQP cuts. Instead of projecting on each polyhedron one after another, it is also possible to project on all polyhedra simultaneously. This method is refered to as parallel Dykstra.
Gaffke and Mathar \cite{GaffkeMathar} were the first who proposed this fully simultaneous method.
The convergence of this algorithm in Euclidean spaces was shown by Iusem and De Pierro \cite{IusemDePierro} using a construction by Pierra \cite{Pierra}. The approach was later generalized to Hilbert spaces, see e.g., \cite{BauschkeBorwein}.\\

\noindent The idea of the parallel Dykstra algorithm is to project onto each set simultaneously and monitor the sequence of weighted averages of these projections. We present here a tailor-made version of this approach by giving each triangle inequality an equal weight. Let $\theta \in (0,1)$. At the start, we set $X^0_\mathcal{Y} = X^0_{efg} = M$ for all $(e,f,g) \in \mathcal{T}$, $R^0_\mathcal{Y} = \bold{0}$ and $R^0_{efg} = \bold{0}$. Moreover, we set $\bar{X}^0 = M$. Now, for each $k \geq 1$ we iterate:
\begin{align} \label{AlgParallelDykstra} \tag{ParDyk}
\begin{aligned}
\begin{aligned}
X^k_\mathcal{Y} & := \mathcal{P}_\mathcal{Y} \left(\bar{X}^{k-1} + R_\mathcal{Y}^{k-1}\right) \\
R_{\mathcal{Y}}^k & := \bar{X}^{k-1} + R_\mathcal{Y}^{k-1} - X^k_{\mathcal{Y}} \end{aligned} \qquad \qquad  \quad \, & \\
\left.
\begin{aligned}
X^k_{efg} & := \mathcal{P}_{\mathcal{H}_{efg}} \left( \bar{X}^{k-1} + R_{efg}^{k-1} \right) \\
R^k_{efg} & := \bar{X}^{k-1} + R_{efg}^{k-1} - X^k_{efg}
\end{aligned} \qquad \quad \right\} & \quad \text{for all } (e,f,g) \in \mathcal{T} \\
\bar{X}^k := \theta X^k_\mathcal{Y} + (1 - \theta)\frac{1}{|\mathcal{T}|} \sum_{(e,f,g) \in \mathcal{T}}X^k_{efg}
\end{aligned}
\end{align}
Note that the projections in (\ref{AlgParallelDykstra}) can be performed simultaneously, as each projection solely uses information resulting from the previous iterate. Under some regularity conditions, the sequence $\{\bar{X}^k \}_{k\geq 1}$ in (\ref{AlgParallelDykstra}) converges strongly to the solution of the best approximation problem, see \cite{IusemDePierro, BauschkeBorwein}. One of the sufficient conditions for convergence is that $\mathcal{Y}_{\mathcal{T}} \neq \emptyset$,
which always holds in our setting.\\\\
Based on a construction by Pierra \cite{Pierra}, it follows that the algorithm (\ref{AlgParallelDykstra}) is equivalent to the cyclic Dykstra algorithm performed to the following two convex sets in the higher dimensional space $(\mathcal{S}^{m+1})^{|\mathcal{T}| + 1} := \mathcal{S}^{m+1} \times ... \times \mathcal{S}^{m+1}$:
\begin{align*}
\bold{S}_1 := \mathcal{Y} \,\, \times \prod_{(e,f,g) \in \mathcal{T}} \mathcal{H}_{efg}  \quad \text{and} \quad \bold{S}_2 := \left\{(X,X, ..., X) \in (\mathcal{S}^{m+1})^{|\mathcal{T}| + 1} \, : \, \, X \in \mathcal{S}^{m+1} \right\},
\end{align*}
using the inner product $\langle \cdot , \cdot \rangle_\theta$ defined as
\begin{align*}
\left\langle (X_0, X_1, ..., X_{|\mathcal{T}|}), (Y_0, Y_1, ..., Y_{|\mathcal{T}|}) \right\rangle_\theta := \theta \, \langle X_0, Y_0 \rangle + (1 - \theta) \, \frac{1}{|\mathcal{T}|} \, \sum_{i = 1}^{|\mathcal{T}|}\langle X_i, Y_i \rangle .
\end{align*}
Preliminary experiments show that the convergence of (\ref{AlgParallelDykstra}) in general takes more iterations than the convergence of (\ref{AlgCyclicDykstra}), where we use the semi-parallel implementation of the latter. This is what one might expect, since in the cyclic version each iterate directly builds on the output of the previous iterates. However, since the projections can be performed simultaneously, the total computation time can still be smaller when implemented on parallel machines. Table~\ref{Tab:cyclicvsparallel} shows a comparison of both methods within the \textsc{CP-ALM} on a test set of Erd\H os-R\'enyi instances implemented on non-parallel machines. Results are presented for different values of $\theta$. In all cases the lower bound obtained by the \textsc{CP-ALM} using  (\ref{AlgParallelDykstra}) in the subproblem at the moment the iteration limit is reached is weaker than the lower bound obtained from using (\ref{AlgCyclicDykstra}) in the subproblem. Moreover, since the parallel version takes more iterations to converge, the computation times are significantly larger. We conclude that the use of (\ref{AlgCyclicDykstra}) is favoured above the use of (\ref{AlgParallelDykstra}) within the \textsc{CP-ALM} in both quality and computation time. For that reason, we only use (\ref{AlgCyclicDykstra}) in the numerical experiments of Section \ref{SectionNumerics}.

\begin{table}[H]
\scriptsize
\centering
\begin{tabular}{@{}ccccccccccccc@{}}
\toprule
    &     &     & \textbf{}        & \textbf{}        & \multicolumn{2}{c}{\multirow{2}{*}{\textbf{\begin{tabular}[c]{@{}c@{}}CP-ALM \\ using cyclic\\ Dykstra\end{tabular}}}} & \multicolumn{6}{c}{\textbf{CP-ALM using parallel Dykstra}}                                                                                \\ \cmidrule(l){8-13}
    &     &     & \multicolumn{2}{c}{\textbf{$PRSM$}} & \multicolumn{2}{c}{}                                                                                                          & \multicolumn{2}{c}{\textbf{$\theta = 0.5$}} & \multicolumn{2}{c}{\textbf{$\theta = 0.85$}} & \multicolumn{2}{c}{\textbf{$\theta = 0.95$}} \\ \cmidrule(l){4-5} \cmidrule(l){6-7} \cmidrule(l){8-9}  \cmidrule(l){10-11} \cmidrule(l){12-13}
$p$ & $n$ & $m$ & \textbf{value}   & \textbf{times}   & \textbf{value}                                                & \textbf{times}                                                & \textbf{value}       & \textbf{times}       & \textbf{value}        & \textbf{times}       & \textbf{value}        & \textbf{times}       \\ \midrule
0.3 & 20  & 119 & 319              & 0.331            & 319                                                           & 0.384                                                         & 319                  & 0.415                & 319                   & 0.378                & 319                   & 0.389                \\
    & 25  & 177 & 386              & 1.822            & 386                                                           & 5.437                                                         & 386                  & 26.61                & 386                   & 24.35                & 386                   & 24.01                \\
    & 30  & 280 & 333              & 20.75            & 339                                                           & 96.27                                                         & 335                  & 7426                 & 333                   & 7036                 & 333                   & 1433                 \\
0.5 & 20  & 195 & 227              & 10.15            & 234                                                           & 92.89                                                         & 229                  & 4203                 & 227                   & 2923                 & 227                   & 733.7                \\
    & 25  & 327 & 169              & 35.68            & 173                                                           & 92.13                                                         & 170                  & 6640                 & 169                   & 5623                 & 169                   & 1852                 \\
    & 30  & 442 & 198              & 91.71            & 202                                                           & 130.8                                                         & 199                  & 12437                & 198                   & 10815                & 198                   & 3677                 \\ \bottomrule
\end{tabular}
\caption{Performance of \textsc{CP-ALM} using cyclic and parallel Dykstra on a test set of 6 Erd\H os-R\' enyi instances with $maxIter = 1000$, $maxTotalIter = 5000$, $numCuts = 150$ and all other parameters the same as given in Section \ref{SectionNumerics}. \label{Tab:cyclicvsparallel}}
\end{table}
\thispagestyle{empty}
\noindent In order to reduce the number of iterations to converge, we can perform a preprocessing step before the $Y$-subproblem is solved using (\ref{AlgParallelDykstra}). Suppose this subproblem involves the projection of a matrix $M$ onto $\mathcal{Y}_\mathcal{T}$. Since this projection is done iteratively, the length of the sequence before convergence depends on the initial distance between $M$ and $\mathcal{Y}_\mathcal{T}$. This distance can be shortened using a simple preprocessing step. This step involves the projection onto all affine constraints of $\mathcal{Y}_\mathcal{T}$. We define:
\begin{align*}
\mathcal{Y}^{\aff} := \left\{ Y \in \mathcal{S}^{m+1} \, : \, \, \begin{aligned}  Y_{00} = 1, \,\, \diag(Y) = Y\bold{e}_0, \,\, \tr(Y) = n+1, \,\, Y_{ef} = 0 \,\,\,\, ( \forall (e,f) \in \mathcal{Z} ) \end{aligned} \right\}.
\end{align*}
Since $\mathcal{Y}^{\aff}$ is an affine subspace, the projection $\mathcal{P}_{\mathcal{Y}^\aff}(\cdot)$ onto $\mathcal{Y}^{\aff}$ can be found explicitly. Now, instead of projecting $M$ onto $\mathcal{Y}_\mathcal{T}$, we can equivalently project the `closer' matrix $\mathcal{P}_{\mathcal{Y}^\aff}(M)$ onto $\mathcal{Y}_\mathcal{T}$, as shown by the following lemma.
\begin{lemma} $\mathcal{P}_{\mathcal{Y}_\mathcal{T}}(M) = \mathcal{P}_{\mathcal{Y}_\mathcal{T}}\left(\mathcal{P}_{\mathcal{Y}^\aff}(M)\right)$.
\end{lemma}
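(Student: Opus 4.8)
The statement is an instance of the standard fact that projecting onto a closed convex set contained in an affine subspace can be split into first projecting onto the affine subspace and then onto the set. The plan is to verify the inclusion $\mathcal{Y}_\mathcal{T}\subseteq\mathcal{Y}^{\aff}$ and then run the Pythagoras argument in $(\mathcal{S}^{m+1},\langle\cdot,\cdot\rangle)$.

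First I would observe that $\mathcal{Y}^{\aff}$ is an affine subspace of $\mathcal{S}^{m+1}$, since it is cut out by the linear equalities $Y_{00}=1$, $\diag(Y)=Y\bold e_0$, $\tr(Y)=n+1$ and $Y_{ef}=0$ for $(e,f)\in\mathcal{Z}$. Comparing with \eqref{DefCaligraphicY}, every one of these equalities is among the constraints defining $\mathcal{Y}$, and $\mathcal{Y}_\mathcal{T}=\mathcal{C}(\mathcal{T})\cap\mathcal{Y}$; hence $\mathcal{Y}_\mathcal{T}\subseteq\mathcal{Y}\subseteq\mathcal{Y}^{\aff}$. Moreover $\mathcal{Y}_\mathcal{T}$ is a nonempty (this is noted to always hold in our setting) closed convex set, so $\mathcal{P}_{\mathcal{Y}_\mathcal{T}}$ is well defined and single-valued, and $\mathcal{Y}^{\aff}\neq\emptyset$ so $\mathcal{P}_{\mathcal{Y}^{\aff}}$ is the usual affine projection. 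Let $L$ denote the linear subspace of $\mathcal{S}^{m+1}$ parallel to $\mathcal{Y}^{\aff}$, so that $\mathcal{Y}^{\aff}=\mathcal{P}_{\mathcal{Y}^{\aff}}(M)+L$ for any $M$, and recall the optimality characterization of the affine projection: $M-\mathcal{P}_{\mathcal{Y}^{\aff}}(M)\perp L$.

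Next, take an arbitrary $Y\in\mathcal{Y}_\mathcal{T}$. Then $Y\in\mathcal{Y}^{\aff}$ and $\mathcal{P}_{\mathcal{Y}^{\aff}}(M)\in\mathcal{Y}^{\aff}$, so $Y-\mathcal{P}_{\mathcal{Y}^{\aff}}(M)\in L$ and therefore is orthogonal to $M-\mathcal{P}_{\mathcal{Y}^{\aff}}(M)$. Writing $M-Y=\big(M-\mathcal{P}_{\mathcal{Y}^{\aff}}(M)\big)+\big(\mathcal{P}_{\mathcal{Y}^{\aff}}(M)-Y\big)$ and applying the Pythagorean identity for the Frobenius norm gives
\begin{align*}
\|M-Y\|_F^2=\big\|M-\mathcal{P}_{\mathcal{Y}^{\aff}}(M)\big\|_F^2+\big\|\mathcal{P}_{\mathcal{Y}^{\aff}}(M)-Y\big\|_F^2 .
\end{align*}
Since the first term does not depend on $Y$, minimizing the left-hand side over $Y\in\mathcal{Y}_\mathcal{T}$ is equivalent to minimizing $\|\mathcal{P}_{\mathcal{Y}^{\aff}}(M)-Y\|_F^2$ over $Y\in\mathcal{Y}_\mathcal{T}$; the two problems have the same (unique) minimizer. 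The first problem has minimizer $\mathcal{P}_{\mathcal{Y}_\mathcal{T}}(M)$ and the second has minimizer $\mathcal{P}_{\mathcal{Y}_\mathcal{T}}\!\big(\mathcal{P}_{\mathcal{Y}^{\aff}}(M)\big)$, whence the claimed equality.

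I do not anticipate a genuine obstacle here: the only points requiring a line of care are checking that $\mathcal{Y}_\mathcal{T}\subseteq\mathcal{Y}^{\aff}$ (pure bookkeeping against the definitions) and invoking the orthogonality $M-\mathcal{P}_{\mathcal{Y}^{\aff}}(M)\perp L$ with respect to the trace inner product rather than some other one. Everything else is the elementary decomposition argument above.
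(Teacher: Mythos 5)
Your proof is correct. It differs in form from the one in the paper: the paper verifies that $\bar{M}:=\mathcal{P}_{\mathcal{Y}_\mathcal{T}}(M)$ satisfies the Kolmogorov (variational-inequality) characterization of the projection of $\hat{M}:=\mathcal{P}_{\mathcal{Y}^{\aff}}(M)$ onto $\mathcal{Y}_\mathcal{T}$, splitting $\langle Y-\bar{M},\hat{M}-\bar{M}\rangle$ into a term that is nonpositive by the Kolmogorov conditions for $\mathcal{P}_{\mathcal{Y}_\mathcal{T}}(M)$ and a term that vanishes by orthogonality of $\hat{M}-M$ to the affine space. You instead use the Pythagorean identity to show that the two objective functions $\Vert M-Y\Vert_F^2$ and $\Vert \hat{M}-Y\Vert_F^2$ differ by an additive constant on $\mathcal{Y}_\mathcal{T}$, so their (unique) minimizers coincide. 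Both arguments hinge on exactly the same two facts — $\mathcal{Y}_\mathcal{T}\subseteq\mathcal{Y}^{\aff}$ and $M-\hat{M}\perp L$ — so the mathematical content is the same; your version is arguably more transparent in that it exposes the general principle (projection onto a closed convex subset of an affine subspace factors through the affine projection) and avoids invoking the variational characterization of the projection onto the non-affine set $\mathcal{Y}_\mathcal{T}$, while the paper's version generalizes more readily to settings where the intermediate set is convex but not affine (where one only gets an inequality rather than orthogonality). Your explicit remarks that $\mathcal{Y}_\mathcal{T}\neq\emptyset$ and that the orthogonality is with respect to the trace inner product are the right points of care and are consistent with the paper.
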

\begin{proof}
Let $\bar{M} := \mathcal{P}_{\mathcal{Y}_\mathcal{T}}(M)$ and $\hat{M} := \mathcal{P}_{\mathcal{Y}^\aff}(M)$. We have to show that $\mathcal{P}_{\mathcal{Y}_\mathcal{T}}(\hat{M}) = \bar{M}$. Using the Kolmogorov conditions, the projection of $\hat{M}$ onto $\mathcal{Y}_\mathcal{T}$ is
the unique solution s.t.:
\begin{align*}
\mathcal{P}_{\mathcal{Y}_\mathcal{T}}(\hat{M}) \in \mathcal{Y}_ \mathcal{T} \qquad \text{and} \qquad \langle Y - \mathcal{P}_{\mathcal{Y}_\mathcal{T}}(\hat{M}), \hat{M} - \mathcal{P}_{\mathcal{Y}_\mathcal{T}}(\hat{M}) \rangle \leq 0 \quad \text{for all} \quad Y \in \mathcal{Y}_\mathcal{T}.
\end{align*}
Clearly, $\bar{M}$ satisfies the first condition. Moreover,
\begin{align*}
\langle Y - \bar{M}, \hat{M} - \bar{M} \rangle & =
\underbrace{\langle Y - \bar{M}, M - \bar{M} \rangle}_{\leq 0} + \underbrace{\langle Y - \bar{M}, \hat{M} - M \rangle}_{= 0} \leq 0,
\end{align*}
for all $Y \in \mathcal{Y}_{\mathcal{T}}$. Here $\langle Y - \bar{M}, M - \bar{M} \rangle \leq 0$ follows from the Kolmogorov conditions for the projection of $M$ onto $\mathcal{Y}_\mathcal{T}$ and the equality $\langle Y - \bar{M}, \hat{M} - M \rangle = 0$ follows from the fact that $Y, \bar{M} \in \mathcal{Y}^\aff$ and $\hat{M} - M$ is orthogonal to the affine space $\mathcal{Y}^\aff$. We conclude that $\bar{M} = \mathcal{P}_{\mathcal{Y}_\mathcal{T}}(\hat{M})$.
\end{proof}
\noindent Observe that the projection onto the unconstrained simplex $\bar{\Delta}(a) := \{ x \in \mathbb{R}^m \, : \, \, \bold{1}^\top x = a\}$ is given by $\mathcal{P}_{\bar{\Delta}(a)}(x) = x - \frac{\bold{1}^\top x - a}{\bold{1}^\top\bold{1}}\bold{1}$.
Thus the projection of $M$ onto $\mathcal{Y}^\aff$ is explicitly given by:
\begin{align*}
\mathcal{P}_{\mathcal{Y}^\aff}(M) = E_{00} + T_{\rm inner}(M) + T_\arrow^*\Big( 3 \cdot \mathcal{P}_{\bar{\Delta}(n)}\big( T_\arrow(M) \big) \Big).
\end{align*}
Solving the $Y$-subproblem is now equivalent to performing the projection onto $\mathcal{Y}^{\aff}$ once and apply (\ref{AlgParallelDykstra}) to project $\mathcal{P}_{\mathcal{Y}^\aff}(M)$ onto $\mathcal{Y}_{\mathcal{T}}$. Further experiments show that this step indeed reduces the number of iterations, but this reduction is not enough to exceed the performance of (\ref{AlgCyclicDykstra}).

\section{Proof of Lemma \ref{LemmaOversampling}} \label{AppendixProofOversampling}
\thispagestyle{empty}
\begin{proof} [\unskip\nopunct]
Observe that a pair of successive arcs $(e,f) \in \delta^-(i) \times \delta^+(i)$ can be added to $H$ either by sampling $e$ and $f$ simultaneously in step 5 of Algorithm \ref{AlgOR} or since both arcs are added to $H$ in combination with some other arc. In the former case, we say that the pair $(e,f)$ is drawn around $i$. For all $i \in N$, $(e,f) \in \delta^-(i) \times \delta^+(i)$ and $k \geq 0$, let $Y^k_{i,(e,f)}$ denote the following random variable:
\begin{align*}
Y^k_{i, (e,f)} := \begin{cases} 0 & \text{if $(e,f)$ is not drawn around $i$ during the first $k$ iterations,} \\
1 & \text{otherwise.}
\end{cases}
\end{align*}
Observe that $Y^k_{i,(e,f)}$ is independent over $i$, as step 5 is performed independently over $N$. Since the probability that a pair $(e,f)$ is added to $H$ in a single iteration equals $r_e \cdot r_f$, we have
\begin{align*}
\Pr\left(Y^k_{i,(e,f)} = 1\right) = 1 - \Pr\left(Y^k_{i,(e,f)} = 0\right) = 1 - \left(1 - r_e \cdot r_f \right)^k.
\end{align*}
Since $r \in \Conv(P)$, there must be at least one cycle cover, say $\bar{x} \in P$, that has full support in $r$. We define the functions $p: N \rightarrow A$ and $s: N \rightarrow A$ that map each node $i$ to its predecessor and successor in $\bar{x}$, respectively. We show that the probability that the support of $\bar{x}$ is in $H$ converges to 1 if $k$ increases. Since the probability that a pair of successive arcs $(e,f) \in \delta^-(i) \times \delta^+(i)$ is present in $H$ after $k$ iterations is at least $\Pr \left(Y^k_{i,(e,f)} = 1\right)$, we have:
\begin{align*}
\Pr \left( \supp(\bar{x}) \subseteq H \text{ after $k$ iterations} \right) & \geq \Pr \left( \prod_{i \in N}Y^k_{i,(p(i),s(i))} = 1 \right)  = \prod_{i \in N} \left( 1 - \left( 1 - r_{p(i)} \cdot r_{s(i)} \right)^k\right).
\end{align*}
Since there exists an $\xi > 0$ such that $r_{p(i)} \cdot r_{s(i)} > \xi$ for all $i \in N$, we have
\begin{align*}
\Pr \left( \supp(\bar{x}) \subseteq H \text{ after $k$ iterations} \right) & \geq \left( 1 - (1 - \xi)^k \right)^n.
\end{align*}
Now for any $q < 1$, take $k^* = \left\lceil \frac{\log (1 - \sqrt[n]{q})}{\log (1 - \xi)} \right\rceil$. Then $\Pr( \supp(\bar{x}) \subseteq H$ after $k^*$ iterations$) \geq q$. Thus, the cycle cover $\bar{x}$ is included in $H$ after a finite number of iterations with arbitrary high probability.
\end{proof}
\thispagestyle{empty}

\end{appendices}

\end{document}